\theoremstyle{plain}
\newtheorem{theorem}{Theorem}
\newtheorem{lemma}[theorem]{Lemma}
\newtheorem{proposition}[theorem]{Proposition}
\newtheorem{corollary}[theorem]{Corollary}
\numberwithin{theorem}{section}
\numberwithin{equation}{theorem}
\theoremstyle{definition}
\newtheorem{definition}[theorem]{Definition}
\newtheorem{setup}[theorem]{Setup}
\newtheorem{example}[theorem]{Example}
\newtheorem{remark}[theorem]{Remark}
\newtheorem*{question*}{Question}
\DeclareMathOperator{\ch}{char}
\DeclareMathOperator{\RHom}{RHom}
\DeclareMathOperator{\End}{End}
\DeclareMathOperator{\Ext}{Ext}
\DeclareMathOperator{\Tor}{Tor}
\DeclareMathOperator{\Hom}{Hom}
\DeclareMathOperator{\id}{injdim}
\DeclareMathOperator{\GKdim}{GKdim}
\DeclareMathOperator{\gldim}{gldim}
\DeclareMathOperator{\im}{im}
\DeclareMathOperator{\depth}{depth}
\DeclareMathOperator{\Aut}{Aut}
\DeclareMathOperator{\Mod}{Mod}
\DeclareMathOperator{\coker}{coker}
\def\Pty{{\sf p}}
\def\mfr{\mathfrak r}\def\mfb{\mathfrak b}
\def\mfa{\mathfrak a}
\def\kk{\mathbbm{k}}
\def\QMod{\operatorname{QMod}}
\def\ch{\operatorname{char}}
\begin{document}

\title[Local cohomology]
{Local cohomology associated to the radical of a group action on a noetherian algebra}

\author{Ji-Wei He and Yinhuo Zhang}

\address{He: Department of Mathematics,
Hangzhou Normal University,
Hangzhou Zhejiang 310036, China}
\email{jwhe@hznu.edu.cn}

\address{Y. Zhang\newline
\indent Department of mathematics and statistics, University of Hasselt, Universitaire Campus,
3590 Diepenbeek, Belgium} \email{yinhuo.zhang@uhasselt.be}

\begin{abstract} An arbitrary group action on an algebra $R$ results in an ideal $\mfr$ of $R$. This ideal $\mfr$ fits into the classical radical theory, and will be called the radical of the group action. If $R$ is a noetherian algebra with finite GK-dimension and $G$ is a finite group, then the difference between the GK-dimensionsof $R$ and that of $R/\mfr$ is called the pertinency of the group action. We provide some methods to find elements of the radical, which helps to calculate the pertinency of some special group actions. The $\mfr$-adic local cohomology of $R$ is related to the singularities of the invariant subalgebra $R^G$. We establish an equivalence between the quotient category of the invariant $R^G$ and that of the skew group ring $R*G$ through the torsion theory associated to the radical $\mfr$. With the help of  the equivalence, we show that the invariant subalgebra $R^G$ will inherit certain Cohen-Macaulay property from $R$.
\end{abstract}

\subjclass[2000]{Primary 16D90, 16E65, Secondary 16B50}


\keywords{Radical, quotient categories, local cohomology, Cohen-Macaulay algebras}


\maketitle


\setcounter{section}{-1}
\section{Introduction}

Let $R$ be a noetherian algebra with finite GK-dimension, and let $G$ be a finite group acting on $R$. In general, it is not easy to study the structure and the representations of a group action. However, one possible way to understand the group action is through the McKay correspondence, which establishes certain correspondence among the representations of the group $G$, of the skew group algebra $R*G$, and of the invariant subalgebra $R^G$ (see \cite{B} for a good survey of the classical McKay correspondence, or \cite{Mo,CKWZ1,CKWZ2} for noncommutative McKay correspondence). An important step for establishing noncommutative McKay correspondence is the Auslander Theorem in the noncommutative case (cf. \cite{BHZ,BHZ2,GKMW}). To prove certain noncommutative version of the Auslander theorem, Bao-Zhang and the first named author introduced an invariant $\Pty(R,G)$, called the pertinency of the group action. In some sense, the pertinency $\Pty(R,G)$ determines whether the Auslander theorem holds or not (cf. \cite{BHZ}).

The invariant subalgebra $R^G$ is not regular in general. For instance, if $R$ is Artin-Schelter regular and the homological determinant of $G$ is trivial, then $R^G$ is Artin-Schelter Gorenstein \cite{JZ}. So, studying the representations of the noncommutative singularities of $R^G$ is necessary. When $R^G$ has isolated singularities, the representations of the singularities are well understood (cf. \cite{MU1,MU2,U}). Iyama-Wemyss developed a wonderful theory on the representations of non-isolated singularities when the algebra under consideration is commutative (cf. \cite{IW}). The representations of noncommutative non-isolated singularities remain less understood. The main purpose of this paper is to understand the properties of the invariant subalgebra $R^G$ when it has non-isolated singularities.

The main tool we will use in this paper is the {\it radical} $\mfr(R,G)$ of the group action. Let $B=R*G$ be the skew group ring, and let $e=\frac{1}{|G|}\sum_{g\in G}g\in \kk G$, the integral of $\kk G$. The radical $\mfr(R,G)$ is in fact the the intersection $R\cap BeB$. The radical can be introduced even if the group $G$ is not finite. We introduce the concept of two {\it pertinent sequences } of elements of $R$ (cf. Section \ref{sec1}), through which we define the radical $\mfr(R,G)$ for any group action. The radical $\mfr(R,G)$ is an ideal of $R$. We show in Section \ref{sec2} that $\mfr(R,G)$ fits into the the classical radical theory which justifies the name. It is usually difficult to compute $R\cap BeB$ directly, while there are several natural ways to find pertinent sequences  which lead to determine elements in the radical.

The pertinency of a group action $\Pty(R,G)$ introduced in \cite{BHZ} is equal to $\GKdim(R)-\GKdim(R/\mfr(R,G))$ (cf. Section \ref{sec3}). The condition $\Pty(R,G)=1$ is related to Shephard-Todd-Chevalley's Theorem (see Propositions \ref{prop2-3} and \ref{prop2-4}). As it was shown in \cite{BHZ} that the condition $\Pty(R,G)\ge2$ is equivalent to the Auslander Theorem when $R$ is GK-Cohen-Macaulay. We recover some known results in Section \ref{set4} by using  pertinent sequences .

Set $A:=R^G$, $\mfr=\mfr(R,G)$ and $\mfa=A\cap \mfr$. We mainly focus on the local cohomology of $A$-modules associated to the ideal $\mfa$. In Section \ref{sec5}, we established some isomorphisms of the local cohomology (cf. Theorem \ref{thm-locdual}). As one of our main results, we have the following theorem (for the terminology, see Section \ref{sec7}).

\begin{theorem}[Theorem \ref{thm-gcm}] \label{thm0.1} Let $G$ be a finite group, and let $R$ be a noetherian left $G$-module algebra with finite GK-dimension. Assume that $R$ satisfies Setup \ref{set}. If $R$ is $G$-Cohen-Macaulay, then $A$ is $\mfa$-Cohen-Macaulay of dimension $\Pty(R,G)$.
\end{theorem}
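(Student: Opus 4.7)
The plan is to deduce the $\mfa$-Cohen-Macaulayness of $A = R^G$ from the $G$-Cohen-Macaulayness of $R$ by transferring the vanishing of local cohomology along the quotient-category equivalence associated with the torsion theory of $\mfr$, which is precisely what Theorem~\ref{thm-locdual} should make explicit.

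First I would unpack the two Cohen-Macaulay hypotheses into vanishing statements. The hypothesis that $R$ is $G$-Cohen-Macaulay should mean that, viewed appropriately (for instance, as a $B = R*G$-module, or equivariantly so that $e$ cuts out the invariants), the local cohomology $H^i_\mfr(R)$ is concentrated in a single cohomological degree equal to $\GKdim R - \GKdim R/\mfr = \Pty(R,G)$. The desired conclusion that $A$ is $\mfa$-Cohen-Macaulay of dimension $\Pty(R,G)$ asks for $H^i_\mfa(A) = 0$ for $i \neq \Pty(R,G)$ and non-vanishing at $i = \Pty(R,G)$.

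Second, I would apply Theorem~\ref{thm-locdual} to obtain isomorphisms linking $H^i_\mfa(A)$ with $H^i_\mfr(R)$ through the idempotent $e = \frac{1}{|G|}\sum_{g \in G} g$, exploiting the relations $eBe \cong A$ and $\mfr = R \cap BeB$. Transporting the $G$-Cohen-Macaulay vanishing across these isomorphisms should yield the required one-degree concentration on the $A$-side, with the degree preserved at $\Pty(R,G)$. The non-vanishing at the critical degree should follow from the corresponding non-vanishing on the $R$-side, since the quotient-category equivalence induced by the torsion theory of $\mfr$ is faithful on objects that are non-torsion.

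The main obstacle I anticipate is matching cohomological indices carefully: one must verify that the isomorphism provided by Theorem~\ref{thm-locdual} preserves the cohomological degree (or shifts it by a known, controllable amount) so that concentration in degree $\Pty(R,G)$ on the $R$-side transfers to concentration in the same degree on the $A$-side, rather than in some shifted degree dictated by the homological dimensions involved in the equivalence. Handling the $\mfr$-torsion correction terms carefully --- which live precisely in the pieces that the quotient category kills --- should be the key technical step, and is exactly where Setup~\ref{set} ought to provide the hypotheses needed to control the error.
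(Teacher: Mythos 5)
Your plan hinges on Theorem~\ref{thm-locdual} providing isomorphisms that link $R^i\Gamma_\mfa(A)$ with $R^i\Gamma_\mfr(R)$ through the idempotent $e$, but that theorem does no such thing: it is an internal statement about $A$ alone, namely $R\Gamma_\mfa(M^\cdot)\cong M^\cdot\otimes_A^{L}R\Gamma_\mfa(A)$ for $M^\cdot\in D^-(\Mod A)$, and it never mentions $R$, $\mfr$, or $e$. (In the paper it is only used later, for Theorem~\ref{thm-finproinj} and Corollary~\ref{cor-cm}, \emph{after} the $\mfa$-Cohen-Macaulayness of $A$ has been established.) Moreover, no degree-preserving comparison of the form $R^i\Gamma_\mfa(A)\cong\bigl(R^i\Gamma_\mfr(R)\bigr)^G$ is available off the shelf: local cohomology is not directly transported by the quotient-category equivalence, because torsion modules are exactly what the quotient category kills. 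What the equivalence does transport is Ext-groups of the images $\mathcal{A}$ and $\mathcal{R}$, and these are related to local cohomology only up to a shift and low-degree correction terms. This is the missing mechanism in your sketch, and it cannot be supplied by the theorem you invoke.

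The paper's actual route is: (1) by Theorem~\ref{thm-equiv} (this is where Setup~\ref{set} enters), $-\otimes_{\mathcal B}\mathcal R$ gives $\QMod_\mfa A\simeq\QMod_\mfb B$ and matches $\mathcal R$ with $\mathcal A$, so $\Ext^i_{\QMod_\mfa A}(\mathcal A,\mathcal A)\cong\Ext^i_{\QMod_\mfb B}(\mathcal R,\mathcal R)$; (2) by Proposition~\ref{prop-qext}, $\Ext^i_{\QMod_\mfb B}(\mathcal R,\mathcal R)\cong\Ext^i_{\QMod_\mfr R}(\mathcal R,\mathcal R)^G$; (3) applying $\Hom_R(-,R)$ to $0\to\mfr^n\to R\to R/\mfr^n\to0$, taking direct limits and using Lemma~\ref{lem-qext}, one gets the four-term sequence $0\to\Gamma_\mfr(R)\to R\to\Hom_{\QMod_\mfr R}(\mathcal R,\mathcal R)\to R^1\Gamma_\mfr(R)\to0$ together with $\Ext^i_{\QMod_\mfr R}(\mathcal R,\mathcal R)\cong R^{i+1}\Gamma_\mfr(R)$ for $i\ge1$, and the same statements with $\mfa$, $\mathcal A$ in place of $\mfr$, $\mathcal R$ (here the AR-property of $\mfa$ is needed); (4) one then feeds the $G$-Cohen-Macaulay vanishing through these identifications, treating the shifted degrees $i\ge2$ by the isomorphism and the degrees $i=0,1$ (and the degenerate cases $p=0,1$) by the four-term sequence. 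You correctly flag the index-matching and torsion-correction issues as the crux, but your proposal offers no tool to resolve them, and the tool you name is the wrong one. A final small point: the paper's definition of ``$\mfa$-Cohen-Macaulay of dimension $d$'' only demands vanishing of $R^i\Gamma_\mfa(A)$ for $i\neq d$, so the nonvanishing-at-$p$ step you anticipate (via faithfulness of the equivalence) is not actually required.
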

As a corollary, we have the following equivalence of local cohomology, where $R^i\Gamma_\mfa$ is the $i$th local cohomology associated to the ideal $\mfa$.

\begin{corollary}[Corollary \ref{cor-cm}] Let $R$ be a $G$-Cohen-Macaulay algebra. Assume that $R$ satisfies Setup \ref{set}.  If $R$ has finite global dimension, then we have
$$R^i\Gamma_\mfa(M)\cong \Tor_{p-i}^A(M,D)$$ for $M\in\Mod A$ and $i\ge0$, where $p=\Pty(R,G)$ and $D=R^p\Gamma_\mfa(A)$.
\end{corollary}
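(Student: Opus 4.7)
The plan is to invoke Theorem \ref{thm-gcm} to identify $R\Gamma_\mfa(A)$ in the derived category, then bootstrap to arbitrary $M$ via a projection-formula argument, and read off cohomology to obtain the $\Tor$ groups.

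Since $R$ is $G$-Cohen-Macaulay, Theorem \ref{thm-gcm} gives that $A$ is $\mfa$-Cohen-Macaulay of dimension $p=\Pty(R,G)$; that is, $R^i\Gamma_\mfa(A)=0$ for $i\neq p$. Hence in the derived category $D(\Mod A)$ one has $R\Gamma_\mfa(A)\cong D[-p]$. The corollary would then follow from the projection-formula identity
\[
R\Gamma_\mfa(M) \;\cong\; M\otimes^L_A R\Gamma_\mfa(A) \;\cong\; M\otimes^L_A D[-p],
\]
since taking $H^i$ of the rightmost expression yields $\Tor^A_{p-i}(M,D)$.

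To justify the projection formula, the natural route is to represent $R\Gamma_\mfa$ by a bounded complex of flat $A$-modules, typically a stable Koszul / \v{C}ech-type complex $C^\bullet$ attached to a finite generating set of $\mfa$ (whose existence is supplied by the noetherian hypothesis). Since $C^\bullet$ is a bounded complex of flat modules it is K-flat, so $M\otimes_A C^\bullet\cong M\otimes^L_A C^\bullet\cong M\otimes^L_A R\Gamma_\mfa(A)$, while $M\otimes_A C^\bullet$ simultaneously computes $R\Gamma_\mfa(M)$. The hypothesis that $R$ has finite global dimension enters here to provide the needed flat-dimension control on $D$ over $A$, transported via the quotient-category/torsion-theory equivalence between $\Mod A$ and $\Mod R*G$ established earlier in the paper. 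The main obstacle is precisely this last step: carefully verifying the flat-complex model for $R\Gamma_\mfa$ in the noncommutative setting, and tracing how finite global dimension of $R$ propagates through the torsion equivalence to produce the finiteness of $D$ needed for the derived tensor manipulations. Once this is secured, the assembly of the Cohen-Macaulay identification with the projection formula is formal.
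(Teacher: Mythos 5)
Your overall skeleton matches the paper's: Theorem \ref{thm-gcm} gives that $A$ is $\mfa$-Cohen-Macaulay of dimension $p$, so $R\Gamma_\mfa(A)\cong D[-p]$ in the derived category, and the identity $R\Gamma_\mfa(M)\cong M\otimes^L_A R\Gamma_\mfa(A)$ then yields $R^i\Gamma_\mfa(M)\cong\Tor^A_{p-i}(M,D)$ after taking cohomology; this is exactly the combination of Theorems \ref{thm-gcm} and \ref{thm-finproinj} that constitutes the paper's one-line proof. The genuine gap is in your justification of the projection formula. You propose to model $R\Gamma_\mfa$ by a bounded complex of flat $A$-modules of stable Koszul/\v{C}ech type attached to a finite generating set of $\mfa$. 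But $A$ is a noncommutative noetherian algebra and $\mfa$ is an arbitrary two-sided ideal assumed only to have the right AR-property; its generators need not commute or be normal, so the stable Koszul complex is not even well-defined in the usual sense, and even in settings where it can be formed there is no general theorem (absent weak proregularity or normality hypotheses, which are not among the assumptions of Setup \ref{set}) that such a complex computes $\underset{n\to\infty}\lim\Ext^i_A(A/\mfa^n,-)$. So the ``typical'' flat model you lean on is simply not available here, and your argument never actually establishes the formula.

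You also misplace where the finite global dimension of $R$ enters. It is not used to control the flat dimension of $D$ over $A$ (which may be infinite); it is used, via the equivalence $\QMod_\mfa A\simeq\QMod_\mfb B$ of Theorem \ref{thm-equiv}, to prove that the torsion functor $\Gamma_\mfa$ has finite cohomological dimension (Proposition \ref{prop-finloc}). That finiteness is precisely the hypothesis of Theorem \ref{thm-locdual}, whose proof replaces your \v{C}ech complex by a bounded complex $E^\cdot$ of $\Gamma_\mfa$-acyclic objects obtained by truncating an injective resolution of the $A$-$A$-bimodule $A$, combined with the fact that $R^i\Gamma_\mfa$ commutes with direct sums (Lemma \ref{lem-sum}, Corollary \ref{cor-tens}), so that for a projective resolution $P^\cdot$ of $M$ the complex $P^\cdot\otimes_A E^\cdot$ computes both $R\Gamma_\mfa(M)$ and $M\otimes^L_A\Gamma_\mfa(E^\cdot)=M\otimes^L_A R\Gamma_\mfa(A)$. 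Rerouting your projection-formula step through Proposition \ref{prop-finloc} and Theorem \ref{thm-locdual} (or reproving them along those lines) closes the gap; as written, the central step of your proposal fails in this noncommutative setting.
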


We will list some examples in Sections \ref{sec-quot}, \ref{sec6} and \ref{sec7} which satisfy Setup \ref{set}.  In case the algebra $R$ is commutative, then most conditions in Setup \ref{set} are automatically satisfied.

Theorem \ref{thm0.1} is obtained via an equivalence of quotient categories associated to the ideals $\mfa$ and $\mfr$. Let $\Mod A$ be the category of right $A$-modules, and let $\Tor_\mfa A$ be the full subcategory of $\Mod A$ consisting of $\mfa$-torsion modules. Since $R$ is noetherian and $G$ is finite, $A$ is noetherian as well. Thus $\Tor_\mfa A$ is a Serre subcategory of $\Mod A$. Hence we obtain an abelian category $\QMod_\mfa A:=\frac{\Mod A}{\Tor_\mfa A}$. Similarly, let $\mfb=\mfr\otimes\kk G\subseteq B=R*G$, we have $\QMod_\mfb B:=\frac{\Mod B}{\Tor_\mfb B}$. The following equivalence of quotient categories holds (for the terminology, see Section \ref{sec-quot}).

\begin{theorem}  [Theorem 6.4] Let $R$ and $G$ be as above.  Assume that $\mfa$ and $\mfr$ are cofinal, and $\mfa$ has the right AR-property.
\begin{itemize}
  \item [(i)] The functor $\Hom_A(R,-):\Mod A\longrightarrow \Mod B$ induces a functor $$\Hom_\mathcal{A}(\mathcal{R},-):\QMod_\mfa A\longrightarrow \QMod_\mfb B.$$
  \item [(ii)] The functors $-\otimes_\mathcal{B}\mathcal{R}$ and $\Hom_\mathcal{A}(\mathcal{R},-)$ are quasi-inverse to each other.
\end{itemize}
\end{theorem}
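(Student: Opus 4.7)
The plan is to lift the adjoint pair $-\otimes_B R \dashv \Hom_A(R, -)$ between $\Mod B$ and $\Mod A$---with $R$ viewed as a $(B,A)$-bimodule via the skew-group action on the left and the inclusion $A=R^G\hookrightarrow R$ on the right---to the quotient categories $\QMod_\mfb B$ and $\QMod_\mfa A$. The cofinality of $\mfa$ and $\mfr$ together with the right AR-property of $\mfa$ are exactly what make each functor compatible with the respective torsion subcategories, and classical Morita-context identities for the idempotent $e=\frac{1}{|G|}\sum_{g\in G}g$ will upgrade the descended adjunction to a quasi-inverse pair modulo torsion.

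For (i), I first observe that $R$ is finitely generated as a right $A$-module, and fix generators $r_1,\ldots,r_s$. Given $M\in\Tor_\mfa A$ and $f\in\Hom_A(R,M)$, the values $f(r_i)$ are annihilated by a common power $\mfa^n$, hence $f(R\mfa^n)=0$. Cofinality produces $k$ with $\mfr^k\subseteq R\mfa^n$, so $f(\mfr^k)=0$. A short calculation using the $G$-stability of $\mfr$ identifies $\mfb^k=\mfr^k B$ and translates this into $f\cdot\mfb^k=0$ under the right $B$-action on $\Hom_A(R,M)$. Thus $\Hom_A(R,-)$ carries $\Tor_\mfa A$ into $\Tor_\mfb B$. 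To promote this to morphisms with torsion kernel and cokernel, left exactness handles the kernel side, while the right AR-property of $\mfa$ is precisely what is needed to show that the $\Ext^1_A(R,-)$ term appearing in the cokernel long exact sequence still lands in $\Tor_\mfb B$. The universal property of the Serre quotient then yields the induced functor $\Hom_\mathcal{A}(\mathcal{R},-)$.

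For (ii), the right-exact functor $-\otimes_B R$ descends by the dual and easier argument---torsion propagates from generators through tensor products---so the adjunction lifts to the quotients. Quasi-invertibility reduces to verifying that the unit $N\to\Hom_\mathcal{A}(\mathcal{R}, N\otimes_\mathcal{B}\mathcal{R})$ and the counit $\Hom_\mathcal{A}(\mathcal{R},M)\otimes_\mathcal{B}\mathcal{R}\to M$ become isomorphisms in the quotients. This rests on the Morita-context identity $eB\otimes_B Be\cong eBe=A$ together with $\mfa=A\cap BeB$ and $\mfb\subseteq BeB$, which guarantee that the kernels and cokernels of the unit and counit on the free generators $A$ and $B$ (arising from the failure of $Be\otimes_A eB\to B$ to be surjective, with image exactly $BeB$) are torsion. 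A standard presentation argument using noetherianness of $A$ and $B$ extends the isomorphism from free modules to arbitrary objects of the quotients.

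The principal obstacle is the descent step in (i): translating object-level preservation of torsion into morphism-level compatibility for the only left-exact functor $\Hom_A(R,-)$ hinges on the AR-property in an essential way, via its control of $\Ext^1_A(R,-)$. The remaining steps are classical bookkeeping around the idempotent $e$ and the $G$-stability of $\mfr$, but care is required to confirm that each structure map threading the adjunction is compatible with the descent to the quotient categories.
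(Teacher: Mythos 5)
Your part (i) follows the paper's route essentially verbatim: the generator argument plus cofinality is the paper's Lemma 6.3(i), and the use of the AR-property to force $\Ext^1_A(R,\ker f)$ (hence the cokernel of $\Hom_A(R,f)$) into $\Tor_\mfb B$ is exactly how the paper passes from objects to morphisms. That half is sound.

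Part (ii) has a genuine gap. The crux of the paper's proof is the multiplication map $Be\otimes_{eBe}eB\to B$, and you only account for its failure of \emph{surjectivity} (image $BeB$, with $B/BeB$ torsion since $\mfb\subseteq BeB$). You never address its kernel $L$, which need not vanish; the paper shows $eL=Le=0$, so $L$ is $\mfb$-torsion on both sides, and this is indispensable. More seriously, your ``standard presentation argument extending from free modules to arbitrary objects'' does not go through as stated: the composite $M\mapsto \Hom_A(R,M\otimes_BR)\cong\Hom_B(Be\otimes_{eBe}eB,M)$ is only left exact, so knowing the unit has torsion kernel and cokernel on free modules does not propagate along a presentation $F_1\to F_0\to M\to 0$ without controlling the $\Ext^1_B$-terms that obstruct right exactness. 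Those terms are governed precisely by $\Ext^j_B(L,-)$ and $\Ext^j_B(B/BeB,-)$, and one must prove that Ext against a bimodule which is finitely generated on the right and $\mfb$-torsion on the left is again $\mfb$-torsion (the paper's Lemma 6.5); this is the actual content you are missing, not routine noetherian bookkeeping. The paper avoids the devissage altogether by comparing $M\cong\Hom_B(B,M)$ with $\Hom_B(Be\otimes_{eBe}eB,M)$ directly through the two exact sequences involving $L$ and $B/BeB$. Two further points: the counit direction needs no torsion argument at all, since $\Hom_A(R,N)\otimes_BR\cong\Hom_{eBe}(Be,N)\otimes_BBe\cong N$ already in $\Mod A$ (using that $eB$ is a projective right $B$-module), a simplification your symmetric treatment misses; and your claim that the adjunction itself ``lifts to the quotients'' is asserted without proof, whereas the paper sidesteps it by exhibiting the two natural isomorphisms in the quotient categories directly.
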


If $R$ is a commutative algebra, then the cofinal condition in the above theorem is indeed necessary (cf. Theorem \ref{thm-commeq}). The extension groups of objects in $\QMod_\mfb B$ can be obtained through the extension groups of the corresponding objects in $\QMod_\mfr R$ (cf. Proposition \ref{prop-qext}), and thus we establish connections among extension groups of objects in $\QMod_\mfa A$, $\QMod_\mfr R$ and $\QMod_\mfb B$ respectively.

Throughout the paper, $\kk$ is a field. All the algebras and modules are assumed to be $\kk$-vector space. Unadorned $\otimes$ means $\otimes_{\kk}$. All the finite groups considered in the paper are nontrivial.

\section{Radicals of group actions} \label{sec1}

Let $R$ be a noetherian algebra. Let $G$ be a finite group which acts on $R$ by automorphisms so that $R$ is a left $G$-module algebra. The $G$-action determines an ideal of $R$.

\begin{definition} \label{def-pelmt} We say that two sequences $(a_1,\dots,a_n)$ and $(b_1,\dots,b_n)$ of elements of $R$ are {\it pertinent under the $G$-action}, if $\sum_{i=1}^na_i(g\cdot b_i)=0$ for all $1\neq g\in G$. We write $(a_1,\dots,a_n)\overset{G}\sim(b_1,\dots,b_n)$ for pertinent sequences.
\end{definition}

\begin{example}\label{example1} (i) Let $R=\kk[x,y]$. Let $\sigma$ be the automorphism of $A$ defined by $\sigma(x)=y$ and $\sigma(y)=x$. Let $G=\{1,g\}$. Then the  sequences $(x,y)\overset{G}\sim(x,-y)$.

(ii) Let $R=\kk_{-1}[x,y]$ be the skew symmetric algebra. Let $\sigma$ be the automorphism of $A$  defined by $\sigma(x)=y$ and $\sigma(y)=x$. Let $G=\{1,g\}$. Then the sequences $(x,y)\overset{g}\sim(x,y)$.
\end{example}

For the convenience of the narratives, we conventionally write $(a_1,\dots,a_n)\vee (a'_1,\dots,a'_m)$ for the sequence $(a_1,\dots,a_n,a'_1,\dots,a'_m)$.
We have the following basic properties of pertinent sequences.

\begin{proposition}\label{prop-basic} Assume $(a_1,\dots,a_n)\overset{G}\sim(b_1,\dots,b_n)$.  The following hold:
\begin{itemize}
  \item [(i)] $(h\cdot a_1,\dots,h\cdot a_n)\overset{G}\sim(h\cdot b_1,\dots,h\cdot b_n)$ for all $h\in G$;
  \item [(ii)] $(a a_1,\dots,aa_n)\overset{G}\sim(b_1b,\dots, b_nb)$ for all $a,b\in A$;
  \item [(iii)] if $(a'_1,\dots,a'_m)\overset{G}\sim(b'_1,\dots,b'_m)$, then $(a_1,\dots,a_n)\vee (a'_1,\dots,a'_m)\overset{G}\sim(b_1,\dots ,b_n)\vee (b'_1,\dots,b'_m)$.
\end{itemize}
\end{proposition}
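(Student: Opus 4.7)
The plan is to unwind the definition $\sum_{i=1}^n a_i (g \cdot b_i) = 0$ for every $1 \neq g \in G$ and exploit the fact that each $h \in G$ acts on $R$ as a $\kk$-algebra automorphism. Each of the three claims reduces to a short manipulation once the relevant identity is set up.

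For (i), I would fix an arbitrary $1 \neq g \in G$ and apply the algebra automorphism $h^{-1}$ term by term to the candidate sum $\sum_i (h \cdot a_i)\bigl(g \cdot (h \cdot b_i)\bigr)$. Because $h^{-1}$ is multiplicative and the $G$-action satisfies $h^{-1} g h \cdot b_i = h^{-1}\cdot \bigl(g \cdot (h\cdot b_i)\bigr)$, this yields $\sum_i a_i \bigl((h^{-1}g h) \cdot b_i\bigr)$. Since conjugation is a group automorphism, $g \neq 1$ forces $h^{-1}g h \neq 1$, so this vanishes by hypothesis; injectivity of the automorphism $h^{-1}$ then transports the vanishing back to the original sum.

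For (ii), the key observation is that $g$ acts as an algebra homomorphism, so $g \cdot (b_i b) = (g \cdot b_i)(g \cdot b)$, whence
\[
\sum_i (a a_i)\bigl(g \cdot (b_i b)\bigr) \;=\; a \Bigl(\sum_i a_i (g \cdot b_i)\Bigr) (g \cdot b) \;=\; 0
\]
for all $g \neq 1$, since $a$ and $g \cdot b$ are independent of the summation index. Part (iii) is immediate from termwise addition: the pertinency sum for the concatenated sequences splits as the sum of the two original pertinency sums, each zero by hypothesis.

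I do not anticipate any genuine obstacle; the only step requiring any maneuver is the conjugation trick in (i), which is standard. The cleanliness of the argument essentially reflects that the pertinent-sequence condition is tailored to be stable under the $G$-action, two-sided multiplication by $R$, and concatenation—exactly the three closure properties needed later when showing that sums $\sum_i a_i b_i$ built from pertinent sequences generate the ideal $\mfr(R,G) = R \cap BeB$.
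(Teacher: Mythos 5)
Your proof is correct, and since the paper dismisses this proposition as ``Straightforward,'' your argument---the conjugation identity $h^{-1}\cdot\bigl((h\cdot a_i)(g\cdot(h\cdot b_i))\bigr)=a_i\bigl((h^{-1}gh)\cdot b_i\bigr)$ for (i), factoring $a$ and $g\cdot b$ out of the sum for (ii), and termwise addition for (iii)---is exactly the intended routine verification. Note also that your computation in (ii) works for arbitrary $a,b\in R$ (the ``$a,b\in A$'' in the statement is evidently a slip for $R$), which is what is needed later to conclude that $\mfr(R,G)$ is an ideal of $R$.
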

\begin{proof} Straightforward.
\end{proof}

Given a $G$-action $\theta:G\to \Aut(R)$, we define $$\mfr(R,G)=\left\{\sum_{i=1}^na_ib_i|(a_1,\dots,a_n)\overset{G}\sim(b_1,\dots,b_n),n\ge1\right\}.$$ By Proposition \ref{prop-basic}(ii), $\mfr(R,G)$ is closed under addition, and by Proposition \ref{prop-basic}(ii) $\mfr(R,G)$ is an ideal of $R$.

\begin{definition}\label{def-pideal} We call $\mfr(R,G)$ {\it the radical of $R$ under the $G$-action}, or simply, {\it the $G$-radical of $R$}, and we call the quotient algebra $\mathfrak{R}(R,G):=R/\mfr(R,G)$ the {\it pertinency algebra} of $R$ under the $G$-action.
\end{definition}

\begin{remark} (i) We will show in the next section that the ideal $\mfr(R,G)$ fits into the classical radical theory of rings (cf. \cite{Sz}).

(ii) The $G$-radical $\mfr(R,G)$ depends mainly on the group action $\theta:G\to \Aut(R)$.
\end{remark}

\begin{example} (i) Let $R=\kk[x,y]$. Let $\sigma$ be the automorphism of $A$ defined by $\sigma(x)=y$ and $\sigma(y)=x$. Let $G=\{1,\sigma\}$. Note that $(x,1)$ and $(1,-y)$ are pertinent sequences. Hence $x-y\in \mfr(R,G)$. We claim that $x^n$ ($n\ge2$) is not in $\mfr(R,G)$. Indeed, if $x^n\in \mfr(R,G)$, then there are pertinent sequences $(a_1,\dots,a_m)$ and $(b_1,\dots,b_m)$ such that $x^n=\sum_{i=1}^ma_ib_i$. Note that $R$ has a basis $\{x^sy^t|s,t\ge0\}$. By a simple comparison of powers of $y$ in the sum $\sum_{i=1}^ma_ib_i$, we may assume that $a_i=\sum_jk_{i_j}x^{s_{i_j}}$ and $b_i=r_ix^{t_i}$, where $k_{i_j},r_i\in\kk$ and $t_1,\dots t_m$ are different numbers. Since $(a_1,\dots,a_m)$ and $(b_1,\dots,b_m)$ are pertinent sequences, we have $0=\sum_{i=1}^ma_i\sigma(b_i)=\sum_{i=1}^m\sum_jk_{i_j}r_ix^{s_{i_j}}y^{t_i}$. Then $\sum_jk_{i_j}r_ix^{s_{i_j}}=0$ for all $i$. Therefore, $\sum_{i=1}^ma_ib_i=\sum_{i=1}^m\sum_jk_{i_j}r_ix^{s_{i_j}}x^{t_i}=0$, a contradiction. Hence $x^n$ is not in $\mfr(R,G)$ for all $n\geq 1$. It follows that $\mathfrak{R}(R,G)\cong \kk[x]$.

(ii) Let $R=\kk_{-1}[x,y]$. Let $\sigma$ be the automorphism of $R$  defined by $\sigma(x)=y$ and $\sigma(y)=x$. Let $G=\{1,\sigma\}$. As we see in Example \ref{example1}, the sequences $(x,y)$ and $(x,y)$ are pertinent, $x^2+y^2\in \mfr(R,G)$. Similar to (i), $x-y\in \mfr(R,G)$. If $char\kk\neq 2$, then we see $x^2,y^2,xy\in \mfr(R,G)$. Therefore $\mfr(R,G)=\kk(x-y)\oplus R_2\oplus R_3\oplus\cdots$, and $\mathfrak{R}(R,G)$ is of dimension 2.
\end{example}

In some cases, the $G$-radical may be trivial. For example, if the $G$-action on $R$ is trivial, that is, $g\cdot a=a$ for all $g\in G$ and $a\in R$, then one sees that the $G$-radical is zero. Following the classical radical theory, we introduce the following definition.

\begin{definition} Let $R$ be a left $G$-module algebra. If the $G$-radial $\mfr(R,G)=0$, then we say that the $G$-action $R$ is {\it $G$-semisimple}.
\end{definition}

By the definition of the $G$-radical, one sees the following basic property of $G$-semisimple $G$-module algebras.

\begin{proposition} \label{prop-subalg} Let $R$ be a left $G$-semisimple $G$-module algebra. If $S$ is a subalgebra of $R$ closed under the $G$-action, then $S$, as a $G$-module algebra, is $G$-semisimple.
\end{proposition}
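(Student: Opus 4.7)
The proof plan is essentially a one-line observation, so I will spell out the underlying reasoning and check that nothing hides under the rug.

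The approach is to unwind the definitions and use the fact that being pertinent and the sum $\sum a_i b_i$ are both computed \emph{inside the ambient algebra}, independently of whether we view the elements in $S$ or in $R$. Concretely, suppose we are given sequences $(a_1,\dots,a_n)$ and $(b_1,\dots,b_n)$ of elements of $S$ with $(a_1,\dots,a_n)\overset{G}\sim(b_1,\dots,b_n)$ in $S$. Since $S$ is closed under the $G$-action, the elements $g\cdot b_i$ used to verify pertinency are the same whether computed in $S$ or in $R$; so the identity $\sum_{i=1}^n a_i(g\cdot b_i)=0$ holds in $R$ as well. Hence the same pair of sequences is pertinent when viewed in $R$, and therefore $\sum_{i=1}^n a_i b_i\in\mfr(R,G)$.

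By the hypothesis that $R$ is $G$-semisimple we have $\mfr(R,G)=0$, so $\sum_{i=1}^n a_ib_i=0$ in $R$, and this equality persists in $S$ since $S\hookrightarrow R$ is an inclusion of $\kk$-algebras. Because every element of $\mfr(S,G)$ is a sum of such expressions $\sum a_i b_i$ (by the very definition of the $G$-radical), we conclude $\mfr(S,G)=0$, i.e.\ $S$ is $G$-semisimple.

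There is essentially no obstacle here: the only thing one needs to be careful about is that the definition of pertinent sequence involves only the multiplication of $R$ and the restricted $G$-action, both of which behave well under inclusion of a $G$-stable subalgebra. No compatibility with the ambient $G$-radical structure, no identification of ideals, and no finiteness or noetherian hypothesis is invoked; the statement is purely formal and follows from Definitions \ref{def-pelmt} and \ref{def-pideal}.
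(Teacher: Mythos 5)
Your argument is correct and is exactly the reasoning the paper intends: the paper gives no written proof, stating only that the proposition follows from the definition of the $G$-radical, and your observation that a pertinent pair of sequences in the $G$-stable subalgebra $S$ remains pertinent in $R$, so that $\mfr(S,G)\subseteq\mfr(R,G)=0$, is precisely that definitional argument spelled out.
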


\section{Pertinency algebras} \label{sec2}

Let $G$ be a group, and let $R$ be a $G$-module algebra. We will review some basic properties of the $G$-radical of $R$, which justify the name ``radical''.

\begin{proposition} Let $R$ be a $G$-module algebra with $G$-action $\theta:G\to \Aut(R)$.
\begin{itemize}
  \item [(i)] The $G$-radical $\mfr(R,G)$ is a $G$-ideal, i.e., it is closed under the $G$-action.
  \item[(ii)] The $G$-action $\theta$ induces a $G$-action $\overline{\theta}:G\to \Aut(\mathfrak{R}(R,G))$ so that $\mathfrak{R}(R,G)$ is also a left $G$-module algebra.
  \item [(iii)] If $G$ is a finite group, then $\mfr(\mathfrak{R}(R,G),G)=0$, i.e., the $G$-module algebra $\mathfrak{R}(R,G)$ is $G$-semisimple.
\end{itemize}
\end{proposition}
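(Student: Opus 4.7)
Parts (i) and (ii) should be immediate corollaries of what is already in hand. For (i), a generator $x = \sum_i a_i b_i$ of $\mfr(R,G)$ with $(a_1,\dots,a_n) \overset{G}\sim (b_1,\dots,b_n)$ satisfies $h \cdot x = \sum_i (h \cdot a_i)(h \cdot b_i) \in \mfr(R,G)$, because $(h \cdot a_i) \overset{G}\sim (h \cdot b_i)$ by Proposition \ref{prop-basic}(i); hence the ideal is $G$-stable. Part (ii) then follows by transport of the $G$-action to the quotient: the formula $\bar g \cdot \bar r := \overline{g \cdot r}$ is well-defined on $\mathfrak{R}(R,G) = R/\mfr(R,G)$ by (i) and respects the algebra structure.

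The substance is (iii). Given $(\bar a_1,\dots,\bar a_n) \overset{G}\sim (\bar b_1,\dots,\bar b_n)$ in $\mathfrak{R}(R,G)$, meaning $m_g := \sum_i a_i (g \cdot b_i) \in \mfr(R,G)$ for each $1\neq g\in G$, my plan is to exhibit a genuinely pertinent pair of sequences in $R$ whose product-sum agrees with $\sum_i a_i b_i$ modulo something that is manifestly zero. Write each $m_g = \sum_j c_{g,j} d_{g,j}$ with $(c_{g,j})_j \overset{G}\sim (d_{g,j})_j$ in $R$, and form the concatenations
$(a_1,\dots,a_n) \vee \bigvee_{g \neq 1}(-c_{g,j})_j$ and
$(b_1,\dots,b_n) \vee \bigvee_{g \neq 1}(g^{-1} \cdot d_{g,j})_j$
(finiteness of $G$ is used here to make the concatenations finite). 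The central computation is to verify that these two sequences are pertinent in $R$: for fixed $h \neq 1$, in the sum that defines the pertinence condition the block indexed by $g\neq h$ contributes $\sum_j c_{g,j}(hg^{-1} \cdot d_{g,j}) = 0$ because $hg^{-1} \neq 1$, while the block indexed by $g = h$ contributes exactly $m_h = \sum_i a_i(h \cdot b_i)$, which cancels the contribution of the $(a_i)$-block.

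Once pertinence is established, one reads off
\[
\textstyle \sum_i a_i b_i \;-\; \sum_{g \neq 1}\sum_j c_{g,j}\,(g^{-1} \cdot d_{g,j}) \;\in\; \mfr(R,G).
\]
The final clean step is that, for each individual $g \neq 1$, the inner sum $\sum_j c_{g,j}(g^{-1} \cdot d_{g,j}) = 0$: this is simply the pertinence of $(c_{g,j}) \overset{G}\sim (d_{g,j})$ evaluated at $h = g^{-1} \neq 1$. Hence $\sum_i a_i b_i \in \mfr(R,G)$, i.e.\ $\sum_i \bar a_i \bar b_i = 0$ in $\mathfrak{R}(R,G)$, and so $\mfr(\mathfrak{R}(R,G),G) = 0$. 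The main obstacle is spotting the correct twist by $g^{-1}$ in the second concatenated sequence: without it the blocks corresponding to different $g$'s do not decouple in the pertinence check and the surviving pieces do not cancel. Once this twist is in place the verification is a matter of carefully matching indices, and the argument requires no hypothesis on $\ch \kk$ or on $|G|$ beyond finiteness.
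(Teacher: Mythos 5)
Your proposal is correct and follows essentially the same route as the paper: (i) and (ii) from Proposition \ref{prop-basic}(i), and for (iii) the same construction of concatenated sequences $(a_i)\vee(-c_{g,j})$ and $(b_i)\vee(g^{-1}\cdot d_{g,j})$, with the pertinence check decoupling into the $g=h$ block (cancelling $m_h$) and the $g\neq h$ blocks (vanishing since $hg^{-1}\neq 1$), followed by the observation that each $\sum_j c_{g,j}(g^{-1}\cdot d_{g,j})=0$. The $g^{-1}$-twist you highlight is exactly the paper's device, so nothing further is needed.
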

\begin{proof} The statement (i) follows from Proposition \ref{prop-basic}(i). Then statement (ii) follows from (i).

(iii) For an element $a\in R$, we write $\overline{a}$ for the image of $a$ in $\mathfrak{R}(R,G)$. Let $(a_1,\dots,a_n)$ and $(b_1,\dots,b_n)$ be sequences of elements in $R$ such that $(\overline{a}_1,\dots,\overline{a}_n)$ and $(\overline{b}_1,\dots,\overline{b}_n)$ are pertinent under the $G$-action in $\mathfrak{R}(R,G)$. Since $G$ is a finite group, we assume $|G|=k$ and we may label the elements of $G$ in $\{1=g_0,g_1,\dots,g_{k-1}\}$.
Then we have $\sum_{i=1}^na_i(g_j\cdot b_i)\in \mfr(R,G)$ for all $j=1,\dots,k-1$. Hence, for each $1\leq j\leq k-1$, there are pertinent sequences $(x_{j1},\dots,x_{jm_j})$ and $(y_{j1},\dots,y_{jm_j})$ in $R$ such that $\sum_{i=1}^na_i(g_j\cdot b_i)=\sum_{i=1}^{m_j}x_{ji}y_{ji}$. We claim that the sequences $(a_1,\dots,a_n)\vee(-x_{11},\dots, -x_{1m_1})\vee\cdots\vee(-x_{k-1,1},\dots, -x_{k-1,m_{k-1}})$ and $(b_1,\dots,b_n)\vee(g_1^{-1}\cdot y_{11},\dots,g_1^{-1}\cdot y_{1m_1})\vee\cdots\vee(g_{k-1}^{-1}\cdot y_{k-1,1},\dots,g_{k-1}^{-1}\cdot y_{k-1,m_{k-1}})$ are pertinent. Indeed, for $1\leq j\leq k-1$, we have
$$\begin{array}{cl}
  &\displaystyle\sum_{i=1}^na_i(g_j\cdot b_i)-\sum_{t=1}^{k-1}\sum_{s=1}^{m_t}x_{ts}(g_jg_t^{-1}\cdot y_{ts})\\
  =&\displaystyle\sum_{i=1}^na_i(g_j\cdot b_i)-\sum_{s=1}^{m_j}x_{js}y_{js}-\sum_{t\neq j}\sum_{s=1}^{m_t}x_{ts}(g_jg_t^{-1}\cdot y_{ts})\\
  =&0.
\end{array}$$
Hence we have $\displaystyle\sum_{i=1}^na_ib_i-\sum_{t=1}^{k-1}\sum_{s=1}^{m_t}x_{ts}(g_t^{-1}\cdot y_{ts})\in \mfr(R,G)$. Since $(x_{t1},\dots,x_{tm_t})$ and $(y_{t1},\dots,y_{tm_t})$ are pertinent for all $t=1,\dots,k-1$, we have $\displaystyle\sum_{t=1}^{k-1}\sum_{s=1}^{m_t}x_{ts}(g_t^{-1}\cdot y_{ts})=0$. Hence $\sum_{i=1}^na_ib_i\in \mfr(R,G)$, that is, $\sum_{i=1}^n\overline{a}_i\overline{b}_i=0$ in $\mathfrak{R}(R,G)$. It follows that the $G$-module algebra $\mathfrak{R}(R,G)$ is $G$-semisimple.
\end{proof}

By the third statement of the proposition above, we see that the pertinency algebra $\mathfrak{R}(R,G)$ is always $G$-semisimple for any finite group action. Indeed, the $G$-radical $\mfr(R,G)$ is the smallest ideal of $R$ closed under $G$-action such that the the quotient algebra is $G$-semisimple.

\begin{lemma} \label{lem-min} Let $G$ be a finite group acting on $A$ by automorphisms. If $I$ is a $G$-ideal of $R$ such that $R/I$ is $G$-semisimple, then $\mfr(R,G)\subseteq I$.
\end{lemma}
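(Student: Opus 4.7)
The plan is to show the inclusion element-wise by pushing a typical generator of $\mfr(R,G)$ through the quotient map $\pi:R\to R/I$ and using the hypothesis that $\mfr(R/I,G)=0$. The key observation is that pertinency is preserved under any $G$-equivariant algebra homomorphism, so the defining generators of $\mfr(R,G)$ are forced into the kernel.

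First I would fix $r\in\mfr(R,G)$ and, by the definition of the $G$-radical, write $r=\sum_{i=1}^n a_ib_i$ for some pertinent sequences $(a_1,\dots,a_n)\overset{G}\sim(b_1,\dots,b_n)$, meaning $\sum_{i=1}^n a_i(g\cdot b_i)=0$ for every $1\neq g\in G$. Because $I$ is a $G$-ideal, the $G$-action on $R$ descends to a well-defined $G$-action on $R/I$, making $\pi:R\to R/I$ a $G$-equivariant algebra homomorphism.

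Next I would observe that applying $\pi$ to the pertinency identity gives
\[
\sum_{i=1}^n \pi(a_i)\bigl(g\cdot\pi(b_i)\bigr)=\pi\Bigl(\sum_{i=1}^n a_i(g\cdot b_i)\Bigr)=0
\]
in $R/I$ for every $1\neq g\in G$. Hence $(\pi(a_1),\dots,\pi(a_n))\overset{G}\sim(\pi(b_1),\dots,\pi(b_n))$ in $R/I$, and therefore
\[
\pi(r)=\sum_{i=1}^n \pi(a_i)\pi(b_i)\in\mfr(R/I,G).
\]

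Finally, since $R/I$ is $G$-semisimple by hypothesis, $\mfr(R/I,G)=0$, so $\pi(r)=0$, i.e., $r\in I$. This gives $\mfr(R,G)\subseteq I$. There is no real obstacle here; the only point one must be careful about is that the definition of pertinency involves all $g\neq 1$ simultaneously, so one must verify the vanishing in $R/I$ for each such $g$, which is immediate from the compatibility of $\pi$ with the $G$-actions and the multiplication.
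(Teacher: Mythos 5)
Your proof is correct and is essentially the paper's own argument: both push a pertinent pair through the $G$-equivariant quotient map $R\to R/I$, note that the images remain pertinent, and invoke $G$-semisimplicity of $R/I$ to conclude the product sum lies in $I$. No gaps.
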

\begin{proof} For an element $a\in R$, we write $\overline{a}$ for the image of $a$ in $R/I$. Let $(a_1,\dots,a_n)$ and $(b_1,\dots,b_n)$ be pertinent sequences in $R$. Then $\sum_{i=1}^na_i(g\cdot b_i)=0$ for all $1\neq g\in G$. Hence $\sum_{i=1}^n\overline{a_i}(\overline{g\cdot b_i})=\sum_{i=1}^n\overline{a_i}(g\cdot \overline{b_i})=0$ in $R/I$. Since $R/I$ is $G$-semisimple, we have $\sum_{i=1}^n\overline{a_i}\overline{b_i}=0$ in $R/I$. Therefore, $\sum_{i=1}^na_i b_i\in I$, and hence $\mfr(R,G)\subseteq I$.
\end{proof}

We have the following ``universal property'' of the algebra morphisms which respect the group actions.

\begin{proposition} \label{prop-univ} Let $G$ be a finite group, and let $R$ be a left $G$-module algebra. Let $S$ be a left $G$-semisimple $G$-module algebra. For any $G$-module algebra morphism $f:R\to S$, there is a unique $G$-module algebra morphism $g:\mathfrak{R}(R,G)\to S$ such that the following diagram commutes
$$\xymatrix{
  R \ar[d]_{\pi} \ar[r]^{f} &     S,   \\
  \mathfrak{R}(R,G) \ar[ur]_{g}                     }$$ where $\pi$ is the natural projection.
\end{proposition}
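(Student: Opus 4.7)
The plan is to reduce the statement to Lemma \ref{lem-min} by showing that the kernel of $f$ contains the $G$-radical $\mfr(R,G)$. Once this is established, the factorization $g$ will exist and be unique by the ordinary universal property of the quotient, and the $G$-equivariance and multiplicativity of $g$ will be automatic.

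First I would observe that since $f:R\to S$ is a morphism of $G$-module algebras, the kernel $I:=\ker f$ is a $G$-ideal of $R$: it is an ideal by the first isomorphism theorem, and $G$-stable because $f(g\cdot a)=g\cdot f(a)=0$ whenever $f(a)=0$. Hence $f$ induces an injective morphism of $G$-module algebras $\bar{f}:R/I\hookrightarrow S$, identifying $R/I$ with a subalgebra of $S$ closed under the $G$-action. By Proposition \ref{prop-subalg}, since $S$ is $G$-semisimple, the subalgebra $R/I$ is also $G$-semisimple. Now Lemma \ref{lem-min} applies directly and yields $\mfr(R,G)\subseteq I=\ker f$.

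Next I would define $g:\mathfrak{R}(R,G)\to S$ by $g(\bar{a}):=f(a)$ for $a\in R$, where $\bar{a}=\pi(a)$. The inclusion $\mfr(R,G)\subseteq \ker f$ guarantees this is well defined, and the universal property of quotient algebras gives uniqueness among algebra maps satisfying $g\circ\pi=f$. That $g$ respects multiplication and the unit is inherited from $f$; $G$-equivariance follows from $g(h\cdot \bar{a})=g(\overline{h\cdot a})=f(h\cdot a)=h\cdot f(a)=h\cdot g(\bar{a})$ for $h\in G$, which uses the $G$-equivariance of both $\pi$ (Proposition on the $G$-action $\overline{\theta}$) and $f$. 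Uniqueness as a $G$-module algebra map is immediate from the surjectivity of $\pi$.

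There is essentially no serious obstacle here; the entire content of the proposition is the implication ``$S$ is $G$-semisimple $\Rightarrow$ $\ker f \supseteq \mfr(R,G)$'', which has already been packaged into Lemma \ref{lem-min}. The only small point that requires a line of justification is that the image of $f$ is a $G$-stable subalgebra of $S$, so that Proposition \ref{prop-subalg} may be invoked to transfer $G$-semisimplicity from $S$ to $R/\ker f$; everything else is the standard factorization argument for quotients.
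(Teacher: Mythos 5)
Your proposal is correct and follows essentially the same route as the paper: transfer $G$-semisimplicity from $S$ to $\operatorname{im}(f)\cong R/\ker f$ via Proposition \ref{prop-subalg}, conclude $\mfr(R,G)\subseteq\ker f$ from Lemma \ref{lem-min}, and then factor through the quotient. The paper's proof is just a terser version of your argument, leaving the well-definedness, equivariance and uniqueness checks implicit.
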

\begin{proof} Since $S$ is $G$-semisimple and $f$ is also a $G$-module morphism, the subalgebra im$(f)$ of $S$ is also $G$-semisimple by Proposition \ref{prop-subalg}. Hence the $G$-radical $\mfr(R,G)$ is contained in $\ker(f)$ by Lemma \ref{lem-min}. Thus the assertion follows.
\end{proof}

Let $G$ be a finite group, $R$ a left $G$-module algebra, and $B=R*G$ the skew polynomial algebra.
Let $\iota: R\to B, a\mapsto a\otimes 1$ be the embedding map, so that $R$ is viewed as a subalgebra of $B$. Similarly, we view $\kk G$ as a subalgebra of $B$. Set $e=\frac{1}{|G|}\sum_{g\in G}g\in \kk G$. Then $e$ is an idempotent in $B$. Let $\mathcal{M}_R$ be the class of right $R$-modules obtained from right $B$-modules which are annihilated by $e$. The next proposition shows that the $G$-radical of $R$ is indeed the $\mathcal{M}_R$-radical in the classical radical theory (cf. \cite{Sz}).

\begin{proposition} \label{prop-mrad} With the notions as above, the following holds: $$\mfr(R,G)=\bigcap_{M\in\mathcal{M}_R}Ann_R(M)$$ where $Ann_R(M)$ is the annihilator of $M$ in $R$.
\end{proposition}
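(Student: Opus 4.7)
My plan is to split the statement into two cleaner equalities and prove each separately: first, the identification $\mfr(R,G)=R\cap BeB$, and second, the characterization $R\cap BeB=\bigcap_{M\in\mathcal{M}_R}\mathrm{Ann}_R(M)$. The bridge between ``pertinent sequences'' and ``$BeB$'' is the following simple identity, which I would isolate at the start: whenever $(a_1,\dots,a_n)\overset{G}{\sim}(b_1,\dots,b_n)$, a direct expansion gives
\[
\sum_{i=1}^{n} a_i\,e\,b_i=\frac{1}{|G|}\sum_{g\in G}\sum_{i=1}^{n} a_i(g\cdot b_i)\,g=\frac{1}{|G|}\sum_{i=1}^{n} a_i b_i,
\]
because all terms with $g\neq 1$ vanish by pertinency. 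This immediately gives $\sum_i a_i b_i=|G|\sum_i a_ieb_i\in R\cap BeB$, proving $\mfr(R,G)\subseteq R\cap BeB$.

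For the converse, $R\cap BeB\subseteq \mfr(R,G)$, I would take an arbitrary $r\in R\cap BeB$, observe that $xe=(\sum_g x_g)e$ for $x=\sum_g x_g g\in B$ (since $ge=e$), so $r$ may be written as $r=\sum_i a_i e y_i$ with $a_i\in R$ and $y_i\in B$. Expanding $y_i=\sum_g b_{i,g}g$ and using $eb_{i,g}=\frac{1}{|G|}\sum_h(h\cdot b_{i,g})h$, the substitution $k=hg$ gives
\[
r=\frac{1}{|G|}\sum_{k\in G}\Bigl[\sum_{i,g} a_i\bigl(kg^{-1}\cdot b_{i,g}\bigr)\Bigr]k.
\]
Comparing coefficients in the free decomposition $B=\bigoplus_{k\in G}Rk$ yields two pieces of information: the $k=1$ component recovers $r=\frac{1}{|G|}\sum_{i,g}a_i(g^{-1}\cdot b_{i,g})$, and every $k\neq 1$ component vanishes. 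Setting $\alpha_{i,g}=\tfrac{1}{|G|}a_i$ and $\beta_{i,g}=g^{-1}\cdot b_{i,g}$, the vanishing conditions read precisely $\sum_{i,g}\alpha_{i,g}(k\cdot\beta_{i,g})=0$ for all $k\neq 1$, so $(\alpha_{i,g})\overset{G}{\sim}(\beta_{i,g})$, and $r=\sum \alpha_{i,g}\beta_{i,g}\in\mfr(R,G)$. I expect this reindexing and coefficient comparison to be the one step that requires the most care.

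For the second equality, the inclusion $R\cap BeB\subseteq\bigcap_M\mathrm{Ann}_R(M)$ is immediate: if $r=\sum_j x_j e y_j$ and $Me=0$, then for every $m\in M$ we have $m\cdot r=\sum_j (mx_j)e y_j=0$ since each $mx_j\in M$ is annihilated by $e$. The reverse inclusion follows by producing a single witness module. I would take $M_0=B/BeB$ as a right $B$-module; then $M_0\cdot e=0$ (since $be\in BeB$), so $M_0\in\mathcal{M}_R$. An element $r\in R$ annihilates $M_0$ iff $br\in BeB$ for all $b\in B$, which, by taking $b=1$, is equivalent to $r\in R\cap BeB$. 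Hence $\mathrm{Ann}_R(M_0)=R\cap BeB$, and therefore $\bigcap_{M\in\mathcal{M}_R}\mathrm{Ann}_R(M)\subseteq R\cap BeB$. Combining the two equalities gives the proposition.
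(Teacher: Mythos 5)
Your proposal is correct and follows essentially the same route as the paper: identify $\mfr(R,G)$ with $R\cap BeB$ via the expansion $\sum_i a_ieb_i=\frac{1}{|G|}\sum_{g\in G}\bigl(\sum_i a_i(g\cdot b_i)\bigr)g$, and then obtain the annihilator description by using $B/BeB$ as the witness module in $\mathcal{M}_R$. The only cosmetic difference is that you verify $R\cap BeB\subseteq\mfr(R,G)$ by expanding a general element of $BeB$ and reindexing, whereas the paper shortcuts this step by citing $BeB=ReR$ from \cite[Proposition 2.13]{CFM}.
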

\begin{proof} We always view $R$ as a subalgebra of $B$ through the map $\iota$. We see that two sequences $(a_1,\dots,a_n)$ and $(b_1,\dots,b_n)$ are pertinent under the $G$-action if and only if $\sum_{i=1}^na_ieb_i=\sum_{i=1}^na_ib_i\otimes 1$. Note that $BeB=ReR$ (cf. \cite[Proposition 2.13]{CFM}). We have $R\cap BeB=\mfr(R,G)$. For a module $M\in \mathcal{M}_R$, we have $Me=0$, and hence $BeB\subseteq Ann_R(M)$. Therefore $\mfr(R,G)\subseteq Ann_R(M)$. Since $\overline{B}\in \mathcal{M}_R$, we have $Ann_R(\overline{B})=R\cap BeB=\mfr(R,G)$. Therefore the proposition follows.
\end{proof}

The following lemma is easy to check.

\begin{lemma} \label{lem-pertinred} Let $G$ be a finite group, and $R$ a left $G$-module algebra.
\begin{itemize}
  \item [(i)] If $(a_1,\dots,a_n)\overset{G}\sim(c_1b_1,\dots,c_nb_n)$ where $c_1,\dots,c_n\in R^G$. Then $$(a_1c_1,\dots,a_nc_n)\overset{G}\sim(b_1,\dots,b_n).$$
  \item [(ii)] Assume $(a_1,\dots,a_n)\overset{G}\sim(b_1,\dots,b_n)$. If $b_i=b_j$ for some $i,j$, then $$(a_1,\dots,a_i+a_j,\dots,\hat{a}_j,\dots,a_n)\overset{G}\sim(b_1,\dots,b_i,\dots,\hat{b}_j,\dots,b_n),$$ where $\hat{a}_j$ (and $\hat{b}_j$) means deleting the element $a_j$ ($b_j$,resp.).
  \item[(iii)]  Assume $(a_1,\dots,a_n)\overset{G}\sim(b_1,\dots,b_n)$. If $a_i=a_j$ for some $i,j$, then $$(a_1,\dots,a_i,\dots,\hat{a}_j,\dots,a_n)\overset{G}\sim(b_1,\dots,b_i+b_j,\dots,\hat{b}_j,\dots,b_n),$$ where $\hat{a}_j$ (and $\hat{b}_j$) means deleting the element $a_j$ ($b_j$,resp.).
\end{itemize}
\end{lemma}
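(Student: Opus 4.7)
The plan is to verify each of the three assertions directly from Definition \ref{def-pelmt}, i.e., to check the equation $\sum_k \alpha_k(g\cdot \beta_k)=0$ for every $1\neq g\in G$ for the proposed new pair of sequences $(\alpha_k)$ and $(\beta_k)$. In every case the pertinency of the original pair supplies the needed vanishing after a one-step algebraic manipulation, so no auxiliary machinery is required.

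For (i), the crucial ingredient is that each $c_i\in R^G$ is fixed by the $G$-action. Thus for any $g\in G$ one has $g\cdot(c_ib_i)=(g\cdot c_i)(g\cdot b_i)=c_i(g\cdot b_i)$, and the pertinency hypothesis rewrites as
\[
0=\sum_{i=1}^n a_i\bigl(g\cdot(c_ib_i)\bigr)=\sum_{i=1}^n (a_ic_i)(g\cdot b_i)\qquad (1\neq g\in G),
\]
which is exactly the pertinency of $(a_1c_1,\dots,a_nc_n)$ and $(b_1,\dots,b_n)$. The only point requiring care is that $c_i$ commute past the $g$-action only because of $G$-invariance; without $c_i\in R^G$, the statement would fail.

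For (ii), using $b_i=b_j$, I simply merge the $i$-th and $j$-th summands in the defining identity:
\[
a_i(g\cdot b_i)+a_j(g\cdot b_j)=(a_i+a_j)(g\cdot b_i),
\]
so $\sum_k a_k(g\cdot b_k)=0$ reads as the pertinency of the contracted pair. Part (iii) is symmetric: from $a_i=a_j$ one factors $a_i(g\cdot b_i)+a_j(g\cdot b_j)=a_i\bigl(g\cdot(b_i+b_j)\bigr)$ and again reads off the claim.

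Each step is a single line of computation; there is no genuine obstacle. If anything should be flagged, it is purely bookkeeping: that the indices labeled $\hat{a}_j$, $\hat{b}_j$ really are the ones being absorbed into the $i$-th entry, and that $G$-invariance of the $c_i$ in (i) is used on both sides of the $g$-action. A proof-by-routine-verification is therefore appropriate, and the authors' one-word proof ``Straightforward'' seems entirely justified.
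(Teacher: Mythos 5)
Your verification is correct and is exactly the routine check the paper intends, since the authors give no argument beyond declaring the lemma easy to check; the key points (using $G$-invariance of the $c_i$ to pull them out of the $g$-action in (i), and merging the $i$-th and $j$-th summands in (ii) and (iii)) are precisely what is needed. Nothing further is required.
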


We end this section with some sufficient conditions for a $G$-module algebra to be $G$-semisimple.

\begin{proposition} \label{prop-ss} Let $G$ be a finite group, and let $R$ be a left $G$-module algebra. Set $A=R^G:=\{a\in R|g\cdot a=a,\forall\  g\in G\}$. If $R$ is a free $A$-module with a free basis $y_0=1,y_1,\dots,y_n\in R$ satisfying the conditions:

{\rm(i)} $g(y_1,\dots,y_n)=(\xi_1y_1,\dots,\xi_ny_n)$ where $\xi_i$ is a $j_i$-th ($j_i\ge3$) primitive root of unity for all $i=1,\dots,n$,

{\rm (ii)} $y_iy_j=0$ for $i,j=1,\dots,n$,\\
 then $R$ is $G$-semisimple.
\end{proposition}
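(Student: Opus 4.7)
I would show that for any pertinent sequences $(a_i)\overset{G}{\sim}(b_i)$ one has $\sum_i a_i b_i = 0$, so that $\mfr(R,G)=0$. The first preparatory step is to check that the direct-sum decomposition $R = A\oplus M$, with $M := \bigoplus_{l\ge 1} A y_l$, is a decomposition of $A$-bimodules; the hypothesis only supplies a left $A$-module structure. The condition $y_k y_l = 0$ for $k,l\ge 1$ makes $M$ square-zero, and for right $A$-stability I would write $y_k a = c_0 + m$ uniquely with $c_0\in A$ and $m\in M$ and apply $g$: since $g\cdot a = a$ and $g\cdot y_k = \xi_k y_k$, the identity $\xi_k c_0 + \xi_k m = c_0 + g\cdot m$ forces $(1-\xi_k)c_0 = 0$ in the $A$-component, so $c_0 = 0$ because $\xi_k\ne 1$.

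Given the bimodule decomposition, write $a_i = \alpha_i + \tilde a_i$ and $b_i = \beta_i + \tilde b_i$ with $\alpha_i,\beta_i\in A$ and $\tilde a_i,\tilde b_i\in M$. Because $M^2=0$ and $g\cdot M\subseteq M$, the product expands as
\[
   a_i(g\cdot b_i) \;=\; \alpha_i\beta_i \;+\; \alpha_i(g\cdot\tilde b_i) \;+\; \tilde a_i\beta_i,
\]
the first summand in $A$ and the other two in $M$; only the middle summand depends on $g$. Projecting the pertinency relation $\sum_i a_i(g\cdot b_i) = 0$ onto $A$ and $M$ therefore yields $\sum_i\alpha_i\beta_i = 0$ together with the requirement that the $M$-valued element $\sum_i\alpha_i(g\cdot\tilde b_i)$ be the same for every $g\ne 1$.

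Expanding $\tilde b_i = \sum_{l\ge 1} b_{il} y_l$, this common element equals $\sum_l \xi_l(g)\,c_l\,y_l$ with $c_l := \sum_i\alpha_i b_{il}\in A$. Comparing the $y_l$-coefficient at $g$ and $g^2$---both nontrivial, since the order of $g$ is divisible by $j_l\ge 3$, and with $\xi_l\ne\xi_l^2$ for the same reason---yields $c_l = 0$ for every $l\ge 1$. Hence $\sum_i\alpha_i\tilde b_i = 0$, which forces $\sum_i\tilde a_i\beta_i = 0$ as well, and combined with $\tilde a_i\tilde b_i = 0$ and $\sum_i\alpha_i\beta_i = 0$ one obtains $\sum_i a_i b_i = 0$. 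The main obstacles are promoting $M$ to an honest bimodule and engineering the eigenvalue separation $\xi_l\ne\xi_l^2$; the hypothesis $j_l\ge 3$ is sharp for the second obstacle, consistent with the spirit of Example~\ref{example1}(ii), where in the order-$2$ analogue the single nontrivial group element leaves no room to separate eigenvalues and pertinency is no longer forced to vanish.
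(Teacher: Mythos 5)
Your proof is correct, and its engine is the same as the paper's: evaluate the pertinency relation at the two nontrivial group elements $g$ and $g^2$ (available because $j_i\ge 3$ forces $g^2\neq 1$ in $G$), compare coefficients in the free basis, use $\xi_l\neq\xi_l^2$ to kill them, and conclude $\sum_i a_ib_i=0$. Where you differ is the preparation: the paper first normalizes an arbitrary pertinent pair via Lemma~\ref{lem-pertinred} to the form $(a_0,a_1,\dots,a_n)\overset{G}\sim(1,y_1,\dots,y_n)$ with $a_1,\dots,a_n\in A$, after which only left $A$-freeness is needed; you keep the sequences arbitrary and instead upgrade $R=A\oplus M$, $M=\bigoplus_{l\ge1}Ay_l$, to an $A$-bimodule splitting with $M^2=0$, proving right $A$-stability of $M$ by the equivariance computation $\xi_k c_0=c_0$. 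That extra step is correct and genuinely needed on your route (without $MA\subseteq M$ the terms $\tilde a_i\beta_i$ and $\tilde a_i(g\cdot\tilde b_i)$ could contribute to the $A$-component), whereas the paper's normalization sidesteps it; in exchange, your version records the structural fact that the hypotheses force a square-zero bimodule decomposition, and it avoids invoking Lemma~\ref{lem-pertinred}. One small calibration: $\xi_l\neq\xi_l^2$ already follows from $\xi_l\neq 1$, i.e.\ from $j_l\ge 2$; the place where $j_l\ge 3$ is truly used is to guarantee $g^2\neq 1$ so that the pertinency condition may be applied at $g^2$, which is exactly what breaks down in the order-two situation of Example~\ref{example1}(ii).
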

\begin{proof} By Lemma \ref{lem-pertinred}, we may assume that pertinent sequences have the form $(a_0,a_1,\dots,a_n)\overset{G}\sim(1,y_1,\dots,y_n)$. By the condition (ii), we may assume $a_1,\dots,a_n\in A$. Assume $a_0=b_0+b_1y_1+\cdots+b_ny_n$ with $b_i\in A$ for $i=0,\dots,n$. By the equality $\sum_{i=0}^na_ig(y_i)=0$ we obtain $b_0=0$ and $b_i=-\xi_ia_i$ for $i=1,\dots,n$. By the equality $\sum_{i=0}^na_ig^2(y_i)=0$, we obtain $b_i=-\xi_i^2a_i$ for $i=1,\dots,n$. Since $\xi_i$ is a $j_i$-th primitive root of unity with $j_i\ge3$ for $i=1,\dots,n$,  we have  $b_i=a_i=0$ for $i=1,\dots,n$. Hence $\mfr(R,G)=0$.
\end{proof}

\section{Group actions with $\Pty(R,G)=1$}\label{sec3}

Recall that we have a definition of the pertinency of a group action on an algebra in \cite{BHZ}, where we had to assume that the group is finite. In order to avoid this restriction, we provide an alternative (equivalent) definition in this paper. We will see, in some cases, the definition here is more flexible.

\begin{definition} Let $G$ be group, and let $R$ be a noetherian algebra with finite Gelfand-Killilov (GK) dimension. The {\it pertinency} of the $G$-action on $R$ is defined to be the number
\begin{equation}\label{pty1}
  \Pty(R,G)=\GKdim (R)-\GKdim (\mathfrak{R}(R,G)).
\end{equation}

\end{definition}

Let $R$ be a noetherian $G$-module algebra with finite GK-dimension, and let $B:=R*G$ be the skew group algebra. Assume $G$ is finite. Let $\kk G$ be the group algebra. As before, let $e=\frac{1}{|G|}\sum_{g\in G}g$. Recall from \cite{BHZ} that the pertinency of the $G$-action is defined to be
\begin{equation}\label{pty2}
  \Pty(R,G)=\GKdim (R)-\GKdim(B/(BeB)).
\end{equation}

\begin{lemma}\label{lem1} Assume $G$ is a finite group. Let $R$ be a noetherian left $G$-module algebra with finite GK-dimension. Then $\GKdim(B/(BeB))=\GKdim (\mathfrak{R}(R,G))$.
\end{lemma}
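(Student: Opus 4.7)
The strategy is to realize $\mathfrak{R}(R,G)=R/\mfr(R,G)$ as a subalgebra of $B/BeB$ such that $B/BeB$ is finitely generated as a module over it on both sides, and then invoke the standard fact from Gelfand--Kirillov theory that such a ``finite'' subalgebra inclusion preserves GK-dimension.

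First I would use Proposition~\ref{prop-mrad}, which identifies $\mfr(R,G)$ with $R\cap BeB$. Composing the embedding $\iota\colon R\hookrightarrow B$ with the canonical projection $B\twoheadrightarrow B/BeB$ yields an algebra homomorphism whose kernel is exactly $R\cap BeB=\mfr(R,G)$, producing an algebra embedding $\mathfrak{R}(R,G)\hookrightarrow B/BeB$. From this point on I view $\mathfrak{R}(R,G)$ as a subalgebra of $B/BeB$.

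Next I would verify that $B/BeB$ is finitely generated as a left (and, symmetrically, right) module over $\mathfrak{R}(R,G)$. Since $B=R*G$ is free of rank $|G|$ as a left $R$-module on the basis $\{g:g\in G\}$, its quotient $B/BeB$ is generated as a left $R$-module by the $|G|$ images of the group elements. Because $BeB$ is a two-sided ideal of $B$, left multiplication on $B/BeB$ by any element of $\mfr(R,G)\subseteq BeB$ is zero, so the left $R$-action on $B/BeB$ factors through $\mathfrak{R}(R,G)$. Hence $B/BeB$ is a finitely generated left $\mathfrak{R}(R,G)$-module, and the analogous argument on the other side gives finite generation as a right module.

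The proof then concludes by the standard fact (see, e.g., Krause--Lenagan, \emph{Growth of Algebras and Gelfand--Kirillov Dimension}): whenever $T\subseteq S$ are algebras with $S$ finitely generated as a one-sided $T$-module, one has $\GKdim(T)=\GKdim(S)$. Applied with $T=\mathfrak{R}(R,G)$ and $S=B/BeB$, this immediately yields the claimed equality. There is no serious obstacle -- the argument is essentially bookkeeping around Proposition~\ref{prop-mrad} -- and the only point worth flagging is that the module structure genuinely descends to the quotient $\mathfrak{R}(R,G)$, which hinges on the two-sidedness of the ideal $BeB\trianglelefteq B$.
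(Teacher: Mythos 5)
Your proposal is correct and follows essentially the same route as the paper: identify $\mfr(R,G)=R\cap BeB$ (Proposition \ref{prop-mrad}), embed $\mathfrak{R}(R,G)$ into $B/BeB$ via $\pi\circ\iota$, note that $B/BeB$ is a finitely generated module over this subalgebra because $B$ is finitely generated over $R$, and apply the Krause--Lenagan result that GK-dimension is preserved under such finite extensions. The extra remarks about the right-module side and the action descending to the quotient are harmless elaborations of what the paper leaves implicit.
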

\begin{proof} As before, let $\iota:R\to B,a\mapsto a\otimes 1$ be the inclusion map. Let $\pi:B\to B/BeB$ be the projection map.
Then $\overline{R}:=\pi\circ\iota(R)$ is a subalgebra of $B/BeB$. Since $B$ is finitely generated as a left $R$-module, we see that $B/BeB$ is a finitely generated $\overline{R}$-module.  By \cite[Proposition 5.5]{KL}$, \GKdim(\overline{R})=\GKdim(B/BeB)$. As in the proof of Proposition \ref{prop-mrad}, we see $\mfr(R,G)=R\cap BeB$. Therefore $\overline{R}\cong \mathfrak{R}(R,G)$.
\end{proof}
 As a consequence of Lemma \ref{lem1}, we obtain:

\begin{proposition} With the assumptions in Lemma \ref{lem1}, the pertinencies defined by (\ref{pty1}) and (\ref{pty2}) are equivalent.
\end{proposition}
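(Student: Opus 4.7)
The statement is essentially a direct corollary of Lemma \ref{lem1}. My plan is to simply unwind the two definitions and substitute the equality of GK-dimensions established there.

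More precisely, definition (\ref{pty1}) sets
\[
\Pty(R,G) = \GKdim(R) - \GKdim(\mathfrak{R}(R,G)),
\]
while definition (\ref{pty2}) sets
\[
\Pty(R,G) = \GKdim(R) - \GKdim(B/BeB).
\]
Under the assumptions of Lemma \ref{lem1} (namely $G$ finite, $R$ noetherian with finite GK-dimension), that lemma asserts
\[
\GKdim(B/BeB) = \GKdim(\mathfrak{R}(R,G)),
\]
so both right-hand sides agree term by term. Consequently the two definitions produce the same number, which is what ``equivalent'' means here.

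There is no real obstacle to overcome: all the work went into Lemma \ref{lem1}, where one had to identify $R \cap BeB$ with $\mfr(R,G)$ (via Proposition \ref{prop-mrad}), observe that $B/BeB$ is finitely generated as a module over the image $\overline{R} \cong \mathfrak{R}(R,G)$, and invoke \cite[Proposition 5.5]{KL} to transfer GK-dimension across this finite module extension. Once that identification is in place, the present proposition is a one-line subtraction.
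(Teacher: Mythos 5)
Your proof is correct and matches the paper exactly: the paper states this proposition as an immediate consequence of Lemma \ref{lem1}, and your argument is precisely that one-line substitution of $\GKdim(B/BeB)=\GKdim(\mathfrak{R}(R,G))$ into the two defining formulas.
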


In order to compute the pertinency of a group action, one has to analyze the radical ideal of the group action. Next we provide several ways to find elements in the radical ideal.

\begin{lemma} \label{lem2} Let $R$ be an algebra, and $G$ a cyclic group generated by $\sigma$. Assume that there are elements $a_1,\dots,a_n\in R$ such that $\sigma\cdot a_i=\xi a_i$ for $i=1,\dots,n$, where $\xi$ is an $n$-th primitive root of unity. If $\ch\kk\nmid n$, then $a_1a_2\cdots a_n\in \mfr(R,G)$.
\end{lemma}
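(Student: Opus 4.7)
The plan is to exhibit two explicit pertinent sequences whose pairwise products telescope to give $a_1 a_2 \cdots a_n$. Since $\ch\kk \nmid n$, the scalar $\tfrac{1}{n}$ lives in $\kk$, so I can define, for $i=1,\dots,n$,
\[
x_i = \tfrac{1}{n}\, a_1 a_2 \cdots a_{i-1}, \qquad y_i = a_i a_{i+1} \cdots a_n,
\]
with the convention that the empty product $a_1 \cdots a_0$ equals $1$. The key observation is that because $\sigma$ acts on each $a_j$ as multiplication by the scalar $\xi$, and $\sigma$ is an algebra automorphism, it acts on any length-$m$ monomial in the $a_j$'s as multiplication by $\xi^m$. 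In particular $\sigma^k \cdot y_i = \xi^{k(n-i+1)} y_i$ for every $k$.

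Next I would verify pertinency, i.e.\ that $\sum_{i=1}^n x_i (\sigma^k \cdot y_i) = 0$ for each $k=1,\dots,n-1$. Substituting the formulas and pulling out the scalar $\xi^{k(n-i+1)}$, the $i$-th term is $\tfrac{1}{n}\xi^{k(n-i+1)} a_1 \cdots a_n$, so
\[
\sum_{i=1}^n x_i (\sigma^k \cdot y_i) \;=\; \tfrac{\xi^{k(n+1)}}{n}\,a_1 \cdots a_n \sum_{i=1}^n \xi^{-ki}.
\]
Since $\xi$ is a primitive $n$-th root of unity and $1 \le k \le n-1$, the inner geometric sum vanishes. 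Thus $(x_1,\dots,x_n) \overset{G}\sim (y_1,\dots,y_n)$.

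Finally, $\sum_{i=1}^n x_i y_i = \sum_{i=1}^n \tfrac{1}{n}\,a_1\cdots a_{i-1}\cdot a_i \cdots a_n = a_1 a_2 \cdots a_n$, which by the definition of $\mfr(R,G)$ places $a_1\cdots a_n$ inside the radical. There is no serious obstacle; the only subtlety is that the $a_i$ need not commute, which is why the sequences are set up so that $x_i y_i$ gives the product $a_1 \cdots a_n$ in the correct order (and why one must be careful to factor the scalar $\xi^{k(n-i+1)}$ past the coefficients $x_i \in R$, which is legitimate since it is a scalar in $\kk$). The hypothesis $\ch \kk \nmid n$ is used exactly once, to invert $n$, and without it the above convex combination collapses.
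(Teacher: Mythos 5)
Your proof is correct and is essentially the paper's own argument: both exhibit the telescoping pertinent sequences of partial products $(a_1\cdots a_{i-1})_i$ and $(a_i\cdots a_n)_i$ and check pertinency via the vanishing geometric sum $\sum_i \xi^{-ki}=0$ for $k=1,\dots,n-1$. The only cosmetic difference is that you absorb the factor $\tfrac{1}{n}$ into the first sequence, whereas the paper obtains $n\,a_1\cdots a_n\in\mfr(R,G)$ and divides by $n$ (implicitly) at the end; this is exactly the same use of the hypothesis $\ch\kk\nmid n$.
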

\begin{proof} We show that the following sequences are pertinent under the $G$-action: $(1,a_1,a_1a_2,\dots,a_1a_2\cdots a_{n-1})$ and $(a_1a_2\cdots a_n,a_2\cdots a_n,\dots,a_n)$. Note that $\sigma^j\cdot a_i=\xi^j a_i$. For $j=1,\dots,n-1$, we have $\sum_{i=0}^{n-1}a_1\cdots a_i(\sigma^j\cdot (a_{i+1}\cdots a_n))=(\sum_{i=0}^{n-1}\xi^{j(n-i)})a_1a_2\cdots a_n=0$. Hence $a_1a_2\cdots a_n\in \mfr(R,G)$.
\end{proof}

Let $R$ be an algebra, and let $a_1,\dots,a_k\in R$. We write $a_1\cdots \hat{a}_i\cdots a_k$ for the product $a_1\cdots a_{i-1}a_{i+1}\cdots a_k$ with $a_i$ missing. Similarly, we have the notion $a_1\cdots \hat{a}_{i_1}\cdots \hat{a}_{i_2}\cdots \hat{a}_{i_s}\cdots a_k$ for $1\leq i_1<\dots<i_s\leq k$. We write $$a_{\widehat{[i_1i_2\cdots i_s]}}:=a_1\cdots \hat{a}_{i_1}\cdots \hat{a}_{i_2}\cdots \hat{a}_{i_s}\cdots a_k,$$
and
$$(a_{\widehat{[i_1i_2\cdots i_s]}}:1\leq i_1<\dots<i_s\leq k)$$
for the sequence corresponding to all the possible choices of arrays $[i_1i_2\cdots i_s]$ in the lexicographic order. Similarly, we have $$a_{[i_1i_2\cdots i_s]}:=a_{i_1}\cdots a_{i_s}$$ for the products of elements, and
$$(a_{[i_1i_2\cdots i_s]}:1\leq i_1<\dots<i_s\leq k).$$
Let $G$ be a group acting on $R$. For $g_1,\dots,g_k\in G$, we write
{\small$$g_{\widehat{[i_1i_2\cdots i_s]}}(a_{\widehat{[i_1i_2\cdots i_s]}}):=g_1(a_1)\cdots \hat{a}_{i_1}\cdots \hat{a}_{i_2}\cdots \hat{a}_{i_s}\cdots g_k(a_k),$$}
and $$\left(g_{\widehat{[i_1i_2\cdots i_s]}}(a_{\widehat{[i_1i_2\cdots i_s]}}):1\leq i_1<\dots<i_s\leq k\right)$$ for the corresponding sequences in the lexicographic order.

\begin{lemma} \label{lem2-1} Let $R$ be an algebra, and  $G=\{g_1,\dots,g_{n-1},g_n=1\}$  a finite group acting on $R$. Assume that $a_1,\dots,a_{n-1}\in R$ are central elements of $R$. Then the following two sequences $$\bigcup_{s=0}^{n-1}\left(g_{\widehat{[i_1i_2\cdots i_s]}}(a_{\widehat{[i_1i_2\cdots i_s]}}):1\leq i_1<\cdots< i_s\leq n-1\right)$$ and $$\bigcup_{s=0}^{n-1}((-1)^sa_{[i_1i_2\cdots i_s]}:1\leq i_1<\cdots< i_s\leq n-1)$$ are pertinent, where $a_{[\ ]}=1$ and $a_{\widehat{[\ ]}}=a_1a_2\cdots a_{n-1}$ when $s=0$.
\end{lemma}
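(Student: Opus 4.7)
The plan is to directly verify the pertinency condition: for every non-identity $h \in G$, I need to check that
$$\sum_{I \subseteq \{1,\dots,n-1\}} g_{\widehat{I}}(a_{\widehat{I}}) \cdot h\bigl((-1)^{|I|} a_{[I]}\bigr) = 0,$$
where $I$ runs through all subsets, grouped by cardinality and ordered lexicographically as in the statement. Since $h$ is an algebra automorphism of $R$, one has $h(a_{[I]}) = \prod_{j \in I} h(a_j)$, so I can rewrite the sum as
$$\sum_{I \subseteq \{1,\dots,n-1\}} (-1)^{|I|} \Bigl(\prod_{j \notin I} g_j(a_j)\Bigr)\Bigl(\prod_{j \in I} h(a_j)\Bigr).$$

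The key observation is that, since each $a_j$ is central in $R$, so is every $g(a_j)$ for $g \in \Aut(R)$: a quick check using that $g$ is an automorphism gives $g(a_j) b = g(a_j \cdot g^{-1}(b)) = g(g^{-1}(b) \cdot a_j) = b \, g(a_j)$ for all $b \in R$. Hence all the factors $g_j(a_j)$ and $h(a_j)$ commute pairwise in $R$, which lets me recognize the sum above as the full expansion of the single product
$$\prod_{j=1}^{n-1}\bigl(g_j(a_j) - h(a_j)\bigr).$$
Indeed, expanding this product amounts to choosing, for each $j$, either $g_j(a_j)$ (contribution indexed by $j \notin I$) or $-h(a_j)$ (contribution indexed by $j \in I$), which reproduces exactly the sum.

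To conclude, because $h \neq 1 = g_n$ and $G \setminus \{1\} = \{g_1, \dots, g_{n-1}\}$, I have $h = g_{j_0}$ for some $j_0 \in \{1,\dots,n-1\}$. The $j_0$-th factor $g_{j_0}(a_{j_0}) - h(a_{j_0})$ then vanishes, so the whole product is zero. This holds for every non-identity $h \in G$, establishing pertinency. I do not anticipate any serious obstacle; the only real insight is spotting the factorization identity, and the centrality hypothesis on the $a_j$ is precisely what licenses the commutations used in its verification.
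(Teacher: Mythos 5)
Your proof is correct and follows essentially the same route as the paper: identify the pertinency sum for each $1\neq h\in G$ with the expansion of $\prod_{j=1}^{n-1}\bigl(g_j(a_j)-h(a_j)\bigr)$, which vanishes because $h=g_{j_0}$ for some $j_0\leq n-1$. Your explicit check that centrality of the $a_j$ (and hence of their images under automorphisms) licenses the reordering needed for the factorization is a point the paper leaves implicit, but the argument is the same.
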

\begin{proof} For every $1\neq g\in G$, we have $$\sum_{[i_1i_2\cdots i_s]} g_{\widehat{[i_1i_2\cdots i_s]}}(a_{\widehat{[i_1i_2\cdots i_s]}})g(a_{[i_1i_2\cdots i_s]})= \prod_{i=1}^{n-1}(g_i(a_i)-g(a_i)),$$ where on the left hand side, the sum is over all the possible choices of the arrays $[i_1i_2\cdots i_s]$ for $s=0,\dots,n-1$. Since $g$ must be one of the elements in $\{g_1,\dots, g_{n-1}\}$, the right hand side of the above equation is zero. Hence the result follows from the definition of the pertinent sequences.
\end{proof}

Now let $R$ be an algebra, and let $a_1,\dots,a_{n-1}\in R$ such that $a_ia_j=q_{ij}a_ja_i$ for all $i< j$ and some $q_{ij}\neq 0$. Let $G=\{g_1,\dots,g_{n-1},g_n=1\}$. Assume $g_i(a_i)=\xi_i a_i$ for $i=1,\dots,n-1$ with $\xi_i\in\kk$. Write
$$q_{[i_1i_2\cdots i_s]}=\prod_{k=1}^s\prod_{\varepsilon} q_{i_kj},$$ where the restriction $\varepsilon$ under the second $\prod$ reads  as $\{j>i_k,\hbox{ and }j\neq i_1,\dots, i_s\}$.

\begin{lemma} \label{lem2-2} With the assumptions as above, we have the following pertinent sequences $$\bigcup_{s=0}^{n-1}g_{\widehat{[i_1i_2\cdots i_s]}}(a_{\widehat{[i_1i_2\cdots i_s]}}:1\leq i_1<\cdots<i_s\leq n-1)$$ and $$\bigcup_{s=0}^{n-1}((-1)^sq_{[i_1i_2\cdots i_s]}a_{[i_1i_2\cdots i_s]}:1\leq i_1<\cdots<i_s\leq n-1),$$ where $a_{[\ ]}=1$ and $a_{\overline{[\ ]}}=a_1a_2\cdots a_{n-1}$ when $s=0$.
\end{lemma}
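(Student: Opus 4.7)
The plan is to mimic the proof of Lemma~\ref{lem2-1} by establishing, for every $g \in G$ with $g \neq 1$, the telescope identity
\[
\Sigma_g \;:=\; \sum_{s=0}^{n-1}\, \sum_{1 \leq i_1 < \cdots < i_s \leq n-1}\, g_{\widehat{[i_1\cdots i_s]}}\bigl(a_{\widehat{[i_1\cdots i_s]}}\bigr)\cdot g\bigl((-1)^s q_{[i_1\cdots i_s]}\, a_{[i_1\cdots i_s]}\bigr) \;=\; \prod_{i=1}^{n-1}\bigl(g_i(a_i) - g(a_i)\bigr),
\]
where the right-hand product is taken in the natural order $i = 1, 2, \ldots, n-1$. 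Once this identity is established, the conclusion is immediate: $g \neq 1$ forces $g = g_k$ for some $1 \leq k \leq n-1$, and then the $k$-th factor $g_k(a_k) - g(a_k) = \xi_k a_k - \xi_k a_k = 0$ kills the product, giving $\Sigma_g = 0$, which is exactly the pertinency condition.

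To verify the identity, I expand
\[
\prod_{i=1}^{n-1}\bigl(g_i(a_i) - g(a_i)\bigr) \;=\; \sum_{S \subseteq \{1,\ldots,n-1\}}\, (-1)^{|S|}\, t_1^S t_2^S \cdots t_{n-1}^S,
\]
where $t_i^S = g_i(a_i)$ if $i \notin S$ and $t_i^S = g(a_i)$ if $i \in S$. For a fixed $S = \{i_1 < \cdots < i_s\}$, I then rearrange the monomial $t_1^S \cdots t_{n-1}^S$ by sliding each factor $t_{i_k}^S$ with $i_k \in S$ to the right of every $t_j^S$ with $j > i_k$ and $j \notin S$, preserving the relative order within each of the two groups. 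Each such swap contributes a factor $q_{i_k j}$ via the $q$-commutation $a_{i_k} a_j = q_{i_k j} a_j a_{i_k}$, transferred to the $g$-images because $g$ is an algebra automorphism acting diagonally on each generator. Summing over all inversion pairs $(i_k, j)$ with $i_k \in S$, $j \notin S$, $i_k < j$, the total accumulated prefactor is precisely $q_{[i_1 \cdots i_s]}$, and the rearranged monomial becomes $g_{\widehat{[i_1\cdots i_s]}}(a_{\widehat{[i_1\cdots i_s]}}) \cdot g(a_{[i_1\cdots i_s]})$, which matches the $S$-term of $\Sigma_g$.

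The main bookkeeping obstacle is verifying the key commutation $g(a_{i_k}) \cdot a_j = q_{i_k j}\, a_j \cdot g(a_{i_k})$ for $i_k < j$, which is needed so that $q$-factors behave uniformly regardless of whether a factor has the form $\xi_i a_i$ or $g(a_i)$. This holds when $g$ acts on each generator by a scalar (the implicit diagonal setting for this lemma), since then $g(a_{i_k})$ is proportional to $a_{i_k}$ and inherits the same $q$-relation with $a_j$. Under this condition, the definition of $q_{[i_1 \cdots i_s]}$ as a product over exactly the inversion pairs makes the accounting transparent, and the lemma reduces to the same telescoping argument used in Lemma~\ref{lem2-1}.
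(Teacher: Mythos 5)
Your proof is correct and follows essentially the same route as the paper: the paper likewise observes that, because $g$ acts diagonally on the generators, the prefactor $q_{[i_1\cdots i_s]}$ exactly compensates for reordering $g(a_{i_1})\cdots g(a_{i_s})$ back into its natural positions, and then sums over all subsets to obtain the ordered product $\prod_{i=1}^{n-1}(g_i(a_i)-g(a_i))$, which vanishes since $g=g_k$ for some $k$. Your write-up merely makes explicit the subset expansion, the inversion-pair bookkeeping, and the (implicit in the paper) hypothesis that every $g\in G$ acts diagonally, all of which match the paper's intended argument.
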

\begin{proof} For every $1\neq g\in G$, since $g$ acts on $a_1,\dots,a_{n-1}$ diagonally, we have $a_{\widehat{[i_1i_2\cdots i_s]}}g((-1)^sq_{[i_1i_2\cdots i_s]}a_{[i_1i_2\cdots i_s]})=(-1)^s a_1\cdots g(a_{i_1})\cdots g(a_{i_s})\cdots a_{n-1}$. Then the same arguments as in the proof of Lemma \ref{lem2-1} hold.
\end{proof}

As a consequence, we have the following result, a part of which is indicated in \cite[Lemma 2.2]{BL} with different constructions.

\begin{proposition} \label{prop2-1} With the same hypothesis as in Lemma \ref{lem2-1}, we have $$\prod_{i=1}^{n-1}(g_i(a_i)-a_i)\in \mfr(R,G).$$
\end{proposition}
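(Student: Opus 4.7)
The plan is to invoke Lemma \ref{lem2-1} and then show that the sum of termwise products of the two pertinent sequences it produces is exactly $\prod_{i=1}^{n-1}(g_i(a_i)-a_i)$. By the very definition of $\mfr(R,G)$, any such sum lies in the radical, so this will suffice.

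Writing $S=\{i_1,\dots,i_s\}\subseteq\{1,\dots,n-1\}$, the $s$-th terms of the two sequences from Lemma \ref{lem2-1} are
$$\prod_{j\notin S} g_j(a_j) \quad\text{and}\quad (-1)^s\prod_{i\in S} a_i,$$
each product written in increasing order of index. Their termwise product, summed over all $s$ and all $S$ of size $s$, gives
$$T:=\sum_{S\subseteq\{1,\dots,n-1\}} (-1)^{|S|}\Bigl(\prod_{j\notin S} g_j(a_j)\Bigr)\Bigl(\prod_{i\in S} a_i\Bigr).$$
On the other hand, expanding $\prod_{i=1}^{n-1}(g_i(a_i)-a_i)$ by distributivity indexes the summands by exactly the same subsets $S$ (where $S$ records the factors in which one chooses the $-a_i$ summand). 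The crucial point is that since each $a_i$ is central in $R$, the factors in the expansion can be freely permuted, so each summand equals the corresponding summand of $T$. Hence $T=\prod_{i=1}^{n-1}(g_i(a_i)-a_i)$, which therefore lies in $\mfr(R,G)$.

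There isn't really a hard step here: everything reduces to the combinatorial identity matching the binomial-type expansion of the product with the indexed sum from Lemma \ref{lem2-1}. The only place where one has to be careful is the rearrangement of factors inside each summand, which is precisely where the centrality hypothesis of Lemma \ref{lem2-1} is used; without it, one would need a twisting factor (which is what Lemma \ref{lem2-2} supplies in the $q$-commuting setting, and which would lead to the analogous statement with $q$-coefficients).
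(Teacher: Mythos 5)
Your proposal is correct and is essentially the paper's own proof: the paper likewise observes that the sum of termwise products of the two pertinent sequences from Lemma \ref{lem2-1} equals $\prod_{i=1}^{n-1}(g_i(a_i)-a_i)$, hence lies in $\mfr(R,G)$ by definition. You merely spell out the subset-indexed expansion and the (correct) use of centrality to reorder factors, which the paper leaves implicit.
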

\begin{proof} Note that the sum of the products of the corresponding elements in the pertinent sequences $\bigcup_{s=0}^{n-1}g_{\overline{[i_1i_2\cdots i_s]}}(a_{\overline{[i_1i_2\cdots i_s]}})$ and $\bigcup_{s=0}^{n-1}(a_{[i_1i_2\cdots i_s]})$ is exactly equal to $\prod_{i=1}^{n-1}(g_i(a_i)-a_i)$.
\end{proof}

There is another way to construct elements in the radical of a group action by means of the determinant.

Let $G=\{1=g_0,g_1,\dots,g_{n-1}\}$ be a finite group acting on an algebra $R$. Taking any elements $a_1,\dots,a_n\in Z(R)$, we define an element
\begin{equation}\label{eq-det}
  \delta_G(a_1,\dots,a_n)=\begin{vmatrix}
a_1&a_2&\cdots& a_n\\
g_1(a_1)&g_1(a_2)&\cdots&g_1(a_n)\\
\vdots&\vdots&&\vdots\\
g_{n-1}(a_1)&g_{n-1}(a_2)&\cdots&g_{n-1}(a_n)
\end{vmatrix}.
\end{equation}

\begin{proposition}\label{prop2-6} With the notions as above, we have $\delta_G(a_1,\dots,a_n)\in \mfr(R,G)$.
\end{proposition}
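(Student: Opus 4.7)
The plan is to realize $\delta_G(a_1,\dots,a_n)$ as the sum $\sum_j a_j b_j$ coming from a pertinent pair obtained by cofactor expansion along the first row. Since $a_1,\dots,a_n\in Z(R)$ and $G$ acts by algebra automorphisms, every $g_i(a_j)$ again lies in $Z(R)$. Therefore all entries of the matrix defining $\delta_G$ lie in the commutative subalgebra $Z(R)$, and I may apply the usual (commutative) determinantal identities without worry.

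First I would introduce the cofactors $C_j=(-1)^{1+j}M_{1j}$, where $M_{1j}\in Z(R)$ is the minor obtained by deleting the first row and the $j$-th column of the matrix in (\ref{eq-det}). Expansion along the first row gives
\[
\delta_G(a_1,\dots,a_n)=\sum_{j=1}^n C_j\, a_j.
\]
The key step is to show that the sequences $(C_1,\dots,C_n)$ and $(a_1,\dots,a_n)$ are pertinent under the $G$-action. Fix any $1\neq g=g_k\in G$. Then $\sum_{j=1}^n C_j\,(g\cdot a_j)=\sum_{j=1}^n (-1)^{1+j}M_{1j}\,g_k(a_j)$ is precisely the cofactor expansion along the first row of the matrix obtained from (\ref{eq-det}) by replacing its first row $(a_1,\dots,a_n)$ with $(g_k(a_1),\dots,g_k(a_n))$. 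But that replacement produces a matrix whose first row coincides with its $(k+1)$-th row, so its determinant vanishes. Hence $\sum_{j=1}^n C_j\,(g\cdot a_j)=0$ for every $1\neq g\in G$, verifying the pertinency condition of Definition~\ref{def-pelmt}.

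Combining the two steps, $\delta_G(a_1,\dots,a_n)=\sum_{j=1}^n C_j\,a_j$ belongs to $\mfr(R,G)$ by the very definition of the radical. The only mildly delicate point is the noncommutative setting, but it is resolved at the outset by the hypothesis $a_j\in Z(R)$, which keeps the whole computation inside $Z(R)$; no genuine obstacle remains beyond the repeated-row observation.
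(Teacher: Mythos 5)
Your proof is correct and is essentially the paper's argument: both rest on the cofactor expansion of $\delta_G(a_1,\dots,a_n)$ and the vanishing of a determinant with two equal rows, all entries lying in $Z(R)$ since automorphisms preserve the center. The only difference is cosmetic: you place the cofactors in the first sequence and apply $g$ to the $a_j$'s (replacing the first row by its $g$-image, which duplicates the $(k+1)$-th row), whereas the paper takes the pertinent pair $(a_1,\dots,a_n)\overset{G}\sim(A_1,\dots,A_n)$ and applies $g$ to the cofactors, using that $gg_j=1$ for some $j$; your variant is, if anything, slightly more direct.
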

\begin{proof} Consider the following two sequences $(a_1,\dots,a_n)$ and $(A_1,\dots,A_n)$, where $A_i$ $(i=1,\dots n)$ is the cofactor of the element $a_i$ in the determinant (\ref{eq-det}). For every $1\neq g\in G$, we have $gg_j=1$ for some $1\leq j\leq n-1$. Hence $$g(A_i)=\begin{vmatrix}
gg_1(a_1)&\cdots&gg_1(a_{i-1})&gg_1(a_{i+1})&\cdots&gg_1(a_n)\\
\vdots&&\vdots&\vdots&&\vdots\\
a_1&\cdots&a_{i-1}&a_{i+1}&\cdots& a_n\\
\vdots&&\vdots&\vdots&&\vdots\\
gg_{n-1}(a_1)&\cdots&gg_{n-1}(a_{i-1})&gg_{n-1}(a_{i+1})&\cdots&gg_{n-1}(a_n)
\end{vmatrix}.$$
Then $\sum_{i=1}^n(a_ig(A_i))$ is a determinant in which there are tow equal rows, and hence it is zero. Therefore, $\delta_G(a_1,\dots,a_n)=\sum_{i=1}^na_iA_i\in \mfr(R,G)$.
\end{proof}

\begin{proposition} \label{prop2-2} Let $R$ be a noetherian algebra which is a domain with $1\leq \GKdim(R)<\infty$, and let $G\subseteq \Aut(R)$ be a finite subgroup with $|G|\neq 1$ and $\ch\kk\nmid |G|$. If $R$ and $G$ satisfy one of the following conditions, then $\Pty(R,G)\ge1$.
\begin{itemize}
  \item [(i)] $R$ is connected graded, the $G$-action preserves the grading of $R$, and the order of $G$ is prime.
  \item [(ii)] $G$ acts faithfully on the center of $R$.
  \item [(iii)] $R=\kk_{q_{ij}}[x_1,\dots,x_n]$, $|G|\leq n+1$ and every $g\in G$ acts on $x_1,\dots,x_{n}$ diagonally.
\end{itemize}
\end{proposition}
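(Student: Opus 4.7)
The plan is to handle the three cases uniformly: in each I exhibit a nonzero element of $\mfr(R,G)$, then invoke the standard fact that, in a noetherian algebra of finite GK-dimension, any two-sided ideal containing a regular element forces the GK-dimension to drop by at least one. Because $R$ is a domain, every nonzero element is regular, so $\mfr(R,G)\neq 0$ already gives $\GKdim \mathfrak{R}(R,G)\leq \GKdim R-1$, i.e.\ $\Pty(R,G)\geq 1$.

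For (i): write $p=|G|$, $G=\langle\sigma\rangle$. Since $\ch\kk\nmid p$, the group algebra $\kk G$ is semisimple and $R$ decomposes into $\sigma$-eigenspaces $R=\bigoplus_{j=0}^{p-1} R_j$ with $R_j=\{r\in R:\sigma\cdot r=\xi^j r\}$ for a fixed primitive $p$-th root of unity $\xi$; each $R_j$ is graded because the action preserves the grading. The action is faithful and trivial on $R_0=\kk$, so some $R_j$ with $j\not\equiv 0\pmod p$ is nonzero. Pick $0\neq a\in R_j$; since $p$ is prime, $\xi^j$ is also a primitive $p$-th root of unity, and Lemma~\ref{lem2} with $a_1=\cdots=a_p=a$ gives $a^p\in\mfr(R,G)$, which is nonzero as $R$ is a domain. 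For (ii): $Z(R)$ is $G$-stable and a domain, and the action extends faithfully to $K:=\Frac(Z(R))$. By Artin's theorem $K/K^G$ is Galois with group $G$; by Dedekind's independence of characters, the matrix $(g_i(z_j))_{i,j}$ is invertible for every $K^G$-basis $z_1,\dots,z_{|G|}$ of $K$. Writing each $z_j$ with a common denominator $d\in Z(R)\setminus\{0\}$ and setting $a_j:=dz_j\in Z(R)$,
\[
\delta_G(a_1,\dots,a_{|G|})=\Bigl(\prod_{i} g_i(d)\Bigr)\det(g_i(z_j))\neq 0,
\]
and Proposition~\ref{prop2-6} places this element in $\mfr(R,G)$.

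For (iii): set $m:=|G|-1$ and enumerate the nonidentity elements $g_1,\dots,g_m$. Diagonality gives characters $\chi_i:G\to\kk^*$ with $g\cdot x_i=\chi_i(g)x_i$, and faithfulness is $\bigcap_i\ker\chi_i=\{1\}$. For each $j$ choose $i(j)\in\{1,\dots,n\}$ with $\chi_{i(j)}(g_j)\neq 1$, and set $a_j:=x_{i(j)}$; the $a_j$ are pairwise quasi-commutative and $g_j(a_j)=\xi_j a_j$ with $\xi_j:=\chi_{i(j)}(g_j)\neq 1$. Running the telescoping used to derive Proposition~\ref{prop2-1} from Lemma~\ref{lem2-1}, but now applied to Lemma~\ref{lem2-2}, shows that the sum of the products of the corresponding entries of the two pertinent sequences equals $\prod_{j=1}^m(g_j(a_j)-a_j)\in\mfr(R,G)$. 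Since each factor is $(\xi_j-1)a_j$ and the $a_j$ quasi-commute, this product is a nonzero scalar multiple of the monomial $a_1a_2\cdots a_m$, which is nonzero in the skew-polynomial domain $R$. The subtlest point I anticipate is this final step: one cannot in general insist on \emph{distinct} indices $i(j)$ (a Hall/SDR matching can fail even under $|G|\leq n+1$, e.g.\ for $G=(\Z/2)^2$ acting on $\kk[x_1,x_2,x_3]$ by $(-1,1,1)$, $(1,-1,1)$, $(-1,-1,1)$), so the cleanest route is to allow repetition and carefully track the $q$-scalars from the quasi-commutations; the hypothesis $|G|\leq n+1$ is precisely what ensures enough variables to make this combinatorial bookkeeping manageable.
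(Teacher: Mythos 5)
Your proposal is correct and follows the paper's overall strategy --- exhibit a nonzero element of $\mfr(R,G)$ and then conclude $\GKdim(R/\mfr(R,G))\le\GKdim(R)-1$ --- but the cases are handled somewhat differently. In (i) the paper diagonalizes the action on $R_1$ and feeds any basis vector into Lemma~\ref{lem2}; your version, decomposing all of $R$ into $\sigma$-eigenspaces and using faithfulness to pick an eigenvector with eigenvalue $\neq 1$, is actually more robust, since the paper's $a\in R_1$ could a priori be fixed by $\sigma$ (both versions tacitly assume $\kk$ contains a primitive $p$-th root of unity); your aside ``$R_0=\kk$'' is a slip --- the point is only that the $1$-eigenspace cannot be all of $R$. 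In (ii) you take a genuinely different route: Artin's theorem and the invertibility of the Galois matrix $(g_i(z_j))$ over $\Frac(Z(R))$, fed into the determinant element of Proposition~\ref{prop2-6}. The paper's argument is much lighter: by faithfulness choose, for each $1\neq g_i$, a central $a_i$ with $g_i(a_i)\neq a_i$; then $\prod_i(g_i(a_i)-a_i)\in\mfr(R,G)$ by Proposition~\ref{prop2-1}, and it is nonzero because $R$ is a domain. Your route buys an explicit Vandermonde-type element but the passage to the fraction field is unnecessary. In (iii) your use of Lemma~\ref{lem2-2} with possibly repeated variables, tracking the $q$-scalars so the resulting element is $\prod_j(\xi_j-1)\,a_1\cdots a_m\neq 0$, is exactly the intended argument and is more careful than the paper's one-line appeal to Proposition~\ref{prop2-1} (whose statement assumes central elements); your observation that distinct indices $i(j)$ need not exist is correct, and with repetitions allowed the hypothesis $|G|\le n+1$ is in fact not used. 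Finally, your closing ``standard fact'' (a two-sided ideal of a noetherian domain containing a nonzero, hence regular, element lowers GK-dimension by at least one) is precisely the step the paper itself asserts without citation via $\GKdim(R/Ra^pR)\le\GKdim(R)-1$; the readily citable version of this fact assumes a normal element, so this imprecision is inherited from the paper rather than introduced by you.
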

\begin{proof} (i) If $|G|$ is a prime number, then $G$ is a cyclic group. Assume $G=\langle g\rangle$. Since $R$ is noetherian, $\dim R_i<\infty$ for all $i\ge0$. Since the order of $g$ is finite, we may choose a basis $S$ of $R_1$ so that $g$ acts on the basis $S$ diagonally. Thus for any element $a\in S$, $g(a)=\xi a$ for some $p$-th root of unit. Letting all the elements $a_i=a$ in Lemma \ref{lem2}, we see $a^p\in \mfr(R,G)$. Since $R$ is a domain, $a^p\neq 0$. It follows that $\GKdim(R/\mfr(R,G))\leq \GKdim(R/Ra^pR)\leq \GKdim(R)-1$. Hence $\Pty(R,G)\ge1$.

(ii) Let $Z(R)$ be the center of $R$. Since $G$ acts on $Z(R)$ faithfully, for any $1\neq g\in G$, there is element $a\in Z(R)$ such that $g(a)\neq a$. Since $R$ is a domain, Proposition \ref{prop2-1} asserts that there is a nonzero element $b\in\mfr(R,G)$. The rest of the proof is similar to the case (i).

(iii) Similar to the above case, the assertion follows from Proposition \ref{prop2-1}.
\end{proof}

The above result shows that in reasonable cases the pertinency of a finite group action is nonzero, which implies the following primeness of the skew group algebra.

\begin{proposition} \label{prop2-3} Let $R$ be a neotherian locally finite graded algebra which is a domain with $1\leq \GKdim(R)<\infty$. Let $G$ be a nontrivial finite subgroup of $\Aut(R)$ such that the restriction of $G$ to the center of $R$ is faithful. Then the skew group algebra $R*G$ is prime.
\end{proposition}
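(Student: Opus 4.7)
The plan is to deduce primeness of $B:=R*G$ from the non-vanishing of the $G$-radical together with a Dedekind-style linear-independence property of the action. By Proposition \ref{prop2-2}(ii) the present hypotheses force $\Pty(R,G)\geq 1$, and in particular $\mfr(R,G)\neq 0$; since $\mfr(R,G)=R\cap BeB$ (as in the proof of Proposition \ref{prop-mrad}), the two-sided ideal $K:=BeB\lhd B$ is nonzero. A direct computation using $g\cdot e=e$ for each $g\in G$ gives $Be=Re$ and $eBe=R^G e$, so the corner ring $(eBe,e)$ is isomorphic to $R^G$, which is a domain (hence prime) as a subring of the domain $R$.

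The key technical step is that both the left annihilator $\{b\in B:bK=0\}$ and the right annihilator $\{b\in B:Kb=0\}$ of $K$ in $B$ vanish. Using $Be=Re$, the left statement reduces to the assertion that if $b=\sum_g b_g g\in B$ satisfies $\sum_g b_g\,g(r)=0$ for all $r\in R$, then every $b_g=0$. I would prove this by a minimal-support argument. Assume a nonzero counterexample normalized so that $b_1\neq 0$; for any $s\in Z(R)$, subtracting from the relation applied to $rs$ the original relation right-multiplied by $s$, and using the centrality of $g(s)-s$, produces
$$\sum_{g}b_g\bigl(g(s)-s\bigr)g(r)=0\qquad(r\in R),$$
in which the $g=1$ term has vanishing coefficient. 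Minimality forces this shorter relation to be trivial, so $b_g(g(s)-s)=0$ in the domain $R$ for every $g$ in the support of $b$ and every $s\in Z(R)$; hence every such $g$ acts trivially on $Z(R)$, and by faithfulness of $G$ on $Z(R)$ the support of $b$ collapses to $\{1\}$, yielding $b_1=0$, a contradiction. The right-annihilator vanishing follows by the symmetric argument.

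To conclude, suppose for contradiction that $IJ=0$ for nonzero two-sided ideals $I,J\lhd B$. Then $(eIe)(eJe)\subseteq e(IJ)e=0$ inside the domain $eBe\cong R^G$, so, without loss of generality, $eIe=0$. A typical element of $KIK$ has the shape $b_1 e(c_1 ic_2)e b_2$ with $c_1ic_2\in I$; since $eIe=0$ kills the middle piece, $KIK=0$. From $K\cdot(IK)=0$ together with the vanishing of the right annihilator of $K$ we deduce $IK=0$; from $I\cdot K=0$ together with the vanishing of the left annihilator of $K$ we deduce $I=0$, the desired contradiction.

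The hard part will be the linear-independence lemma in the second paragraph: this is the unique step requiring both the domain hypothesis on $R$ and the faithfulness of the $G$-action on $Z(R)$, and its opposite-handed version has to be proved as well. Everything else is a formal consequence of $\mfr(R,G)\neq 0$ and the identification $eBe\cong R^G$.
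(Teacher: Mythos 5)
Your argument is correct, but it takes a genuinely different route from the paper's. The paper handles this proposition in two lines: it quotes \cite[Lemma 3.10]{BHZ}, which asserts that $R*G$ is prime if and only if $\Pty(R,G)\ge 1$, and then gets $\Pty(R,G)\ge 1$ from Proposition \ref{prop2-2}(ii). You instead prove primeness from scratch: the corner ring $eBe\cong R^G$ is a domain, the left and right annihilators of $K=BeB$ vanish by your Dedekind-style minimal-support argument (which is where the domain hypothesis and faithfulness of $G$ on $Z(R)$ enter, via the centrality of $g(s)-s$), and primeness follows formally; all of these steps check out, including the reduction $bK=0\Leftrightarrow \sum_g b_g\,g(r)=0$ for all $r$, and the opposite-handed version does go through symmetrically after noting $eB=eR$. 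Two observations on the comparison. First, your invocation of Proposition \ref{prop2-2} is superfluous: $K\neq 0$ is automatic because $e\in BeB$ and $e\neq 0$ (the only characteristic assumption you need is the standing one, $\ch\kk\nmid |G|$, so that $e$ is defined). Consequently your proof never uses the grading, local finiteness, noetherianness, or finiteness of GK-dimension at all --- it establishes the stronger statement that $R*G$ is prime whenever $R$ is a domain and $G$ is a finite group, with $|G|$ invertible, acting faithfully on $Z(R)$ --- and it is self-contained, whereas the paper's proof leans on the external pertinency machinery of \cite{BHZ}. What the paper's route buys is brevity and the quantitative byproduct $\Pty(R,G)\ge 1$ (the cited lemma is an equivalence), which is the invariant the surrounding propositions actually track.
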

\begin{proof} By \cite[Lemma 3.10]{BHZ}, $R*G$ is prime if and only if $\Pty(G,R)\ge1$. Now the result follows from Proposition \ref{prop2-2}.
\end{proof}

The finite group actions of pertinency 1 is of special interest. If $R$ is a polynomial ring, then we have the following version of Shephard-Todd-Chevalley Theorem.

\begin{proposition} \label{prop2-4} Let $V$ be a finite dimensional vector space, and $R=\kk[V]$ the polynomial algebra. Let $G\subseteq GL(V)$ be a finite subgroup. Then for the invariant subalgebra $R^G$, $\gldim (R^G)<\infty$ if and only if $\Pty(R,G)=1$.
\end{proposition}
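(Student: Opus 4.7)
The plan is to combine the classical Chevalley--Shephard--Todd theorem with a pertinency computation tailored to polynomial rings. By Chevalley--Shephard--Todd, $\gldim(R^G) < \infty$ is equivalent to $R^G$ being a polynomial ring, which is equivalent in turn to $G$ being generated by pseudo-reflections in $GL(V)$. So the task reduces to proving that $\Pty(R,G) = 1$ if and only if $G$ is generated by pseudo-reflections.

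For the implication from ``$G$ generated by pseudo-reflections'' to $\Pty(R,G) = 1$, first note that $\Pty(R,G) \geq 1$ already follows from Proposition \ref{prop2-2}(ii), since $G$ acts faithfully on the center $Z(R) = R$. For the matching upper bound, I would use that $R$ is a free $R^G$-module of rank $|G|$ (Chevalley--Shephard--Todd) and analyze the Auslander-type embedding $B = R*G \hookrightarrow \End_{R^G}(R) \cong M_{|G|}(R^G)$, which fails to be surjective precisely because of the presence of pseudo-reflections. Tracking $e$ through this embedding and computing $BeB$ explicitly (as in the prototype case $G = \mathbb{Z}/m$ acting on $k[x]$, where the image of $B$ in $M_m(R^G)$ is ``almost lower-triangular'' and $B/BeB$ turns out to be finite-dimensional over $k$) then gives $\GKdim(B/BeB) = n-1$, i.e.\ $\Pty(R,G) = 1$.

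For the converse, I would argue contrapositively: assuming $G$ is not generated by pseudo-reflections, produce $\Pty(R,G) \geq 2$. Let $K \subseteq G$ be the (normal) subgroup generated by all pseudo-reflections of $G$. By Chevalley--Shephard--Todd applied to $K$, $R^K$ is a polynomial ring $\kk[W]$, and $H := G/K$ acts faithfully on it. A theorem of Steinberg tells us that $H \subset GL(W)$ contains no pseudo-reflections, hence is a small subgroup. The classical Auslander theorem for small groups, which in this paper's framework corresponds to $\Pty \geq 2$ in the GK-Cohen--Macaulay case (the equivalence attributed to \cite{BHZ} in the introduction), then gives $\Pty(R^K, H) \geq 2$. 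The remaining step is a transfer $\Pty(R,G) \geq \Pty(R^K, H)$ through the tower $R^G \subseteq R^K \subseteq R$, obtained by observing that any pertinent pair of sequences $(a_i) \overset{G}{\sim} (b_i)$ lying in $R^K$ is automatically pertinent for the $H$-action on $R^K$, since $H$-actions on $R^K$ are induced from $G$-actions on $R$. The main obstacle here is exactly this lifting step: the extension $1 \to K \to G \to H \to 1$ need not split (for example when $G = \mathbb{Z}/2 \times \mathbb{Z}/4$ with $K = (\mathbb{Z}/2)^2$), so $R*G$ does not decompose as $(R*K)*H$ in an elementary way, and a careful functorial comparison of pertinent sequences across the tower of invariants is required.
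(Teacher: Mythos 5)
Your route is genuinely different from the paper's, which settles the statement in two lines by quoting \cite[Lemma 7.2]{BHZ2} for ``$\gldim(R^G)<\infty$ iff $\Pty(R,G)\le 1$'' and then Proposition \ref{prop2-2}(ii) for the lower bound $\Pty(R,G)\ge 1$; you instead go through Chevalley--Shephard--Todd and try to prove directly that $\Pty(R,G)=1$ iff $G$ is generated by pseudo-reflections. In the forward direction your key step is not an argument: asserting that tracking $e$ through $B\hookrightarrow \End_{R^G}(R)\cong M_{|G|}(R^G)$ works out ``as in the prototype case'' does not compute $\GKdim(B/BeB)$ for a general reflection group. (That half is nonetheless true, with a one-line proof: if $s\in G$ is a pseudo-reflection with reflecting hyperplane $H\subseteq V$, then evaluating the pertinency relations at a point of $H$ shows every element of $\mfr(R,G)$ vanishes on $H$, so $\GKdim(R/\mfr(R,G))\ge \dim V-1$ and $\Pty(R,G)\le 1$. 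Note this uses only the \emph{existence} of one pseudo-reflection, not generation by them --- which is exactly the warning sign for your converse.)

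The converse is where the proposal breaks down irreparably. The transfer inequality $\Pty(R,G)\ge\Pty(R^K,H)$ is false, and your supporting observation points the wrong way: a $G$-pertinent pair of sequences lying in $R^K$ is indeed $H$-pertinent, but that compares the radicals in the opposite direction to the one you need; worse, for such sequences the pertinency condition at any $1\neq g\in K$ reads $\sum_i a_ib_i=0$, so sequences inside $R^K$ contribute nothing to $\mfr(R,G)$. Concretely, let $\kk=\mathbb{C}$, $R=\kk[x,y]$, $G=\langle g\rangle\cong\mathbb{Z}/4$ with $g\cdot x=-x$, $g\cdot y=iy$. Then $g^2$ is the only pseudo-reflection, $K=\langle g^2\rangle$, $R^K=\kk[x,y^2]$, and $H\cong\mathbb{Z}/2$ acts by $-1$ on $x$ and $y^2$, so Lemma \ref{lem2} gives $x^2,xy^2,y^4\in\mfr(R^K,H)$ and $\Pty(R^K,H)=2$. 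On the other hand $y^4\in\mfr(R,G)$, while every element of $\mfr(R,G)$ vanishes on the line $y=0$ fixed by $g^2$, so $\mfr(R,G)\subseteq (y)$ and $\Pty(R,G)=1$, even though $G$ is not generated by pseudo-reflections and $R^G=\kk[x^2,y^4,xy^2]\cong\kk[A,B,C]/(C^2-AB)$ has infinite global dimension. So the equivalence you reduce to (``$\Pty=1$ iff $G$ is generated by pseudo-reflections'') cannot be proved by any argument: pertinency detects only the presence of a pseudo-reflection. This example should also be checked against the statement itself and against the reading of \cite[Lemma 7.2]{BHZ2} used in the paper's proof. Finally, the appeal to ``a theorem of Steinberg'' to rule out pseudo-reflections in $H\subseteq GL(W)$ is unsubstantiated; Steinberg's theorem concerns point stabilizers in reflection groups, not quotients by the reflection subgroup.
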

\begin{proof} By \cite[Lemma 7.2]{BHZ2}, we see that $\gldim(R^G)<\infty$ if and only if $\Pty(R,G)\leq1$. Proposition \ref{prop2-2} applies to obtain the desired result.
\end{proof}

If $R$ is not commutative, we have the following result.

\begin{proposition}\label{prop2-5} Let $R$ be a neotherian AS-regular algebra which is a domain and Cohen-Macaulay, and let $G\subseteq\Aut(R)$ be a finite subgroup. Assume that $R$ and $G$ are in one of the cases in Proposition \ref{prop2-2}. If $\gldim(R^G)<\infty$, then $\Pty(R,G)=1$.
\end{proposition}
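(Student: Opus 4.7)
The plan is to mirror the proof of Proposition \ref{prop2-4} almost verbatim, replacing the polynomial-ring hypothesis with the AS-regular Cohen-Macaulay domain hypothesis. Since $\Pty(R,G)$ is a nonnegative integer, it suffices to establish the two inequalities $\Pty(R,G) \ge 1$ and $\Pty(R,G) \le 1$ separately, and then conclude equality.

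The lower bound $\Pty(R,G) \ge 1$ is immediate from the hypothesis: we are assuming that the pair $(R,G)$ falls into one of the three cases listed in Proposition \ref{prop2-2}, and that proposition delivers exactly this inequality. (Note that the domain hypothesis on $R$, which is essential for Proposition \ref{prop2-2}, is already among our assumptions.)

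For the upper bound $\Pty(R,G) \le 1$, I would invoke the noncommutative analogue of \cite[Lemma 7.2]{BHZ2} that was used in the proof of Proposition \ref{prop2-4}: under the assumption that $R$ is a noetherian AS-regular Cohen-Macaulay domain and $G$ a finite subgroup of $\Aut(R)$, finiteness of $\gldim(R^G)$ forces $\Pty(R,G) \le 1$. The justification for this implication is homological rather than combinatorial: passing through the skew group ring $B = R*G$ and the idempotent $e$, one expresses $R^G$ as the corner ring $eBe$, and uses the AS-regular plus Cohen-Macaulay hypotheses on $R$ to control the projective dimension of $B$-modules killed by $e$; finiteness of $\gldim(eBe)$ then forces the GK-codimension of $B/BeB$, equivalently $\GKdim(R) - \GKdim(\mathfrak{R}(R,G))$ via Lemma \ref{lem1}, to be at most one. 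This is precisely the inequality $\Pty(R,G) \le 1$. Combining the two bounds gives $\Pty(R,G) = 1$.

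The main obstacle in the argument is verifying that the upper-bound lemma of \cite{BHZ2} genuinely applies in the generality of AS-regular Cohen-Macaulay domains, and not only for polynomial rings as used in Proposition \ref{prop2-4}. In particular, one must check that the proof of that lemma uses only the AS-regularity, Cohen-Macaulayness, and domain hypotheses on $R$, together with finiteness of $G$, and not any feature specific to the commutative polynomial setting. Once this is confirmed, the remainder of the proof is purely formal and follows the same two-step pattern as Proposition \ref{prop2-4}.
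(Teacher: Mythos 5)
Your lower bound is fine and coincides with the paper's: Proposition \ref{prop2-2} gives $\Pty(R,G)\ge 1$ under the stated hypotheses. The genuine gap is the upper bound, which is the entire content of the proposition beyond Proposition \ref{prop2-2}. You propose to ``invoke the noncommutative analogue of \cite[Lemma 7.2]{BHZ2}'', but that lemma is a statement about the commutative polynomial ring $\kk[V]$ with $G\subseteq GL(V)$ (its proof rests on the Shephard--Todd--Chevalley theorem and the geometry of fixed loci), and no analogue in the generality of noetherian AS-regular Cohen--Macaulay domains is available to quote; indeed, whether $\gldim(R^G)<\infty$ is equivalent to $\Pty(R,G)\le 1$ in that generality is exactly a noncommutative Shephard--Todd--Chevalley--type assertion that one cannot take for granted. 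Your sketch of how such an analogue would be proved (``use AS-regularity and Cohen--Macaulayness to control the projective dimension of $B$-modules killed by $e$, so that finiteness of $\gldim(eBe)$ forces $\GKdim(B/BeB)\ge\GKdim(R)-1$'') names a hoped-for mechanism without supplying it, and you yourself flag this verification as ``the main obstacle''. So the proof is incomplete precisely at the step that distinguishes this proposition from Proposition \ref{prop2-4}.

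For comparison, the paper proves the needed implication $\gldim(R^G)<\infty\Rightarrow\Pty(R,G)\le 1$ by a different route that avoids any extension of \cite[Lemma 7.2]{BHZ2}: by \cite[Lemmas 1.10 and 1.11]{KKZ}, finiteness of $\gldim(R^G)$ forces $R$ to be free as a graded right $R^G$-module, so $R\cong R^G\oplus\bigl(\oplus_{s=1}^{n}R^G(i_s)\bigr)$ with all shifts $i_s\le -1$; consequently the graded algebra $\End_{R^G}(R)$ has nonzero components in negative degrees and cannot be isomorphic to $R*G$ as a graded algebra. Then \cite[Theorem 0.3]{BHZ}, which is where the Cohen--Macaulay and domain hypotheses enter, says that the Auslander map being an isomorphism is equivalent to $\Pty(R,G)\ge 2$; since it fails here, $\Pty(R,G)\le 1$, and Proposition \ref{prop2-2} upgrades this to equality. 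If you want to salvage your write-up, you should replace the appeal to a hypothetical analogue of \cite[Lemma 7.2]{BHZ2} by this freeness-plus-Auslander argument (or else actually prove the analogue, which is a substantial task, not a formality).
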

\begin{proof} By \cite[Lemmas 1.10 and 1.11]{KKZ}, if $R^G$ has finite global dimension, then $R$ is free as a graded right $R^G$-module. Since both $R$ and $R^G$ are connected graded, $R$ can be written as $R\cong R^G\oplus M$ where $M\cong\oplus_{s=1}^n R^G(i_s)$ for some $n$ and $i_s\leq -1$, where $R^G(i_s)$ is a graded $R^G$-module by a shift of degree $i_s$. Hence as a graded algebra $\End_{R^G}(R)$ contains nonzero component of negative degree. So, $\End_{R^G}(R)$ can not be isomorphic to $R*G$ as graded algebras. By \cite[Theorem 0.3]{BHZ}, $\Pty(R,G)\leq 1$. It follows from Proposition \ref{prop2-2} that $\Pty(R,G)=1$.
\end{proof}

\section{Group actions with $\Pty(R,G)>1$} \label{set4}

Let $G$ be a finite group, and let $R$ be a noetherian left $G$-module algebra. It is possible that the pertinency algebra $\mathfrak{R}(R,G)$ is trivial, that is, $\mfr(R,G)=R$. Indeed, this is the case when $R/R^G$ is a {\it Hopf Galois extension} (for the definition, see \cite{CFM}).

\begin{proposition} \label{prop3-1} Let $G$ be a finite group and let $R$ be a left $G$-module algebra. Then $\mfr(R,G)=R$ if and only if $R/R^G$ is a Hopf Galois extension.
\end{proposition}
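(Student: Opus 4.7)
The plan is to reduce both conditions to the existence of \emph{Galois coordinates}: finite sequences $(x_1,\ldots,x_n)$ and $(y_1,\ldots,y_n)$ in $R$ satisfying
$$\sum_{i=1}^n x_i\,g(y_i)=\delta_{g,1}\qquad\text{for every }g\in G.$$
Such coordinates exist precisely when $R/R^G$ is a Hopf Galois extension in the sense of \cite{CFM} (for finite $G$ with $|G|$ invertible, this is the standard dual-basis reformulation of the bijectivity of the canonical map). So the real work is to bridge Galois coordinates with the ideal $\mfr(R,G)$.

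First I would simply split the family of identities into two parts: the equations indexed by $g\neq 1$ read $\sum_i x_ig(y_i)=0$, which by Definition \ref{def-pelmt} is exactly the assertion that $(x_1,\ldots,x_n)\overset{G}{\sim}(y_1,\ldots,y_n)$; and the remaining equation at $g=1$ reads $\sum_i x_iy_i=1$. Hence Galois coordinates in $R$ exist if and only if there is a pertinent pair of sequences whose pairwise product sums to $1$, i.e.\ if and only if $1\in\mfr(R,G)$. Since $\mfr(R,G)$ is a two-sided ideal of $R$, the latter is equivalent to $\mfr(R,G)=R$, completing both directions.

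A more conceptual second proof, which I would record as an alternative, goes through the skew group algebra $B=R*G$ and the idempotent $e=\tfrac{1}{|G|}\sum_{g\in G}g$. From the proof of Proposition \ref{prop-mrad} we already have $\mfr(R,G)=R\cap BeB$, so $\mfr(R,G)=R$ is equivalent to $1\in BeB$, hence to $BeB=B$. The equivalence $BeB=B \Longleftrightarrow R/R^G$ is Hopf Galois in the finite-group setting is in \cite{CFM}, and the concrete translation is given by the computation
$$\sum_i x_i\,e\,y_i=\tfrac{1}{|G|}\sum_{g\in G}\Bigl(\sum_i x_i\,g(y_i)\Bigr)g,$$
which converts Galois coordinates into a presentation of $\tfrac{1}{|G|}$ inside $BeB$, and conversely extracts Galois coordinates from any presentation of $1$ in $BeB$.

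The only potential obstacle is purely notational: matching the Hopf-algebra convention of \cite{CFM} (where $R$ may be viewed either as a left $\kk G$-module algebra or as a right $(\kk G)^{*}$-comodule algebra) with the pertinent-sequence formulation used here. For finite $G$ the two viewpoints are canonically equivalent and yield the same notion of Galois extension, so the difficulty is clerical rather than mathematical, and the proof collapses to the single observation that $1\in\mfr(R,G)$ is exactly the Galois-coordinate condition.
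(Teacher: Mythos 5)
Your proposal is correct, and your ``alternative'' second proof is essentially the paper's own argument: the paper also uses $\mfr(R,G)=R\cap BeB$ (so $\mfr(R,G)=R$ iff $ReB$-generated ideal $BeB=ReR$ equals $B$) and then invokes \cite[Theorem 1.2]{CFM}, phrased via the Morita context between $R^G$ and $R*G$, to identify this with the Hopf Galois condition. Your primary proof takes a genuinely more element-level route: you bypass $B=R*G$ altogether, reformulating the Galois condition as the existence of Galois coordinates $\sum_i x_i\,g(y_i)=\delta_{g,1}$ and then reading off, directly from Definition \ref{def-pelmt} and the definition of $\mfr(R,G)$, that such coordinates exist iff $1\in\mfr(R,G)$ iff $\mfr(R,G)=R$ (here it is worth noting explicitly that, by Proposition \ref{prop-basic}(iii), every element of $\mfr(R,G)$ is realized by a \emph{single} pertinent pair, so $1\in\mfr(R,G)$ does give one pertinent pair with $\sum_i x_iy_i=1$). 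What this buys is transparency: the proposition becomes visibly a statement about pertinent sequences, with no Morita theory on stage. What it does not buy is independence from \cite{CFM}: the step ``Galois coordinates exist iff $R/R^G$ is Hopf Galois'' is precisely where the content of \cite[Theorem 1.2]{CFM} (surjectivity of the canonical/Morita map, plus the fact that for finite-dimensional Hopf algebras surjectivity of the canonical map forces bijectivity) enters, so the citation is relocated rather than removed. A minor point: with the integral $t=\sum_{g\in G}g$ in place of $e$ the coordinate formulation requires no division by $|G|$, although the paper's standing use of $e=\frac{1}{|G|}\sum_{g\in G}g$ presumes $\ch\kk\nmid|G|$ in any case, so your parenthetical caveat is consistent with the paper's setting.
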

\begin{proof} Let $e=\frac{1}{|G|}\sum_{g\in G} g\in \kk G$. Note that $\mfr(R,G)=R$ if and only if $ReR=R*G$, if and only if $R^G$ and $R$ are Morita equivalent, which is equivalent to the condition that $R/R^G$ is Hopf Galois extension \cite[Theorem 1.2]{CFM}.
\end{proof}

Assume that $R$ has finite GK-dimension. It is clear that if $R/R^G$ is a Hopf Galois extension, then $\Pty(R,G)=\GKdim(R)$. A weak version of a Hopf Galois extension, called a {\it Hopf dense Galois extension}, was introduced in \cite{HVZ}. The next result is a consequence of Lemma \ref{lem1} and \cite[Proposition 1.3]{HVZ}.

\begin{proposition} Assume that $G$ is a finite group. Let $R$ be a noetherian left $G$-module algebra with finite GK-dimension. Then $R/R^G$ is a Hopf dense Galois extension if and only if $\Pty(R,G)=\GKdim(R)$.
\end{proposition}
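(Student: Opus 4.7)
The plan is to reduce both directions of the claimed equivalence to a single GK-dimensional condition on the quotient algebra $B/BeB$, where $B=R*G$ and $e=\frac{1}{|G|}\sum_{g\in G}g\in \kk G$, and then invoke \cite[Proposition 1.3]{HVZ}. No new manipulation of pertinent sequences is required; all the combinatorial work has already been absorbed into Lemma \ref{lem1}.

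First, I would unwind the definition (\ref{pty1}). By construction, the equality $\Pty(R,G)=\GKdim(R)$ is equivalent to $\GKdim(\mathfrak{R}(R,G))=0$, i.e.\ the pertinency algebra being finite-dimensional over $\kk$ (with the standard convention $\GKdim(0)=-\infty\le 0$; the degenerate case $\mathfrak{R}(R,G)=0$ corresponds to the Hopf Galois case of Proposition \ref{prop3-1}, which is \emph{a fortiori} Hopf dense Galois).

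Second, Lemma \ref{lem1} supplies the identity $\GKdim(\mathfrak{R}(R,G))=\GKdim(B/BeB)$, so the condition $\Pty(R,G)=\GKdim(R)$ becomes $\GKdim(B/BeB)=0$. Because $R$ is noetherian and $G$ is finite, $B=R*G$ is noetherian; hence $B/BeB$ is a finitely generated noetherian $\kk$-algebra, and for such an algebra ``$\GKdim=0$'' coincides with being finite-dimensional over $\kk$.

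Third, I would quote \cite[Proposition 1.3]{HVZ}, which characterizes $R/R^G$ as a Hopf dense Galois extension by exactly this finite-dimensionality of $B/BeB$ (it is the ``dense'' weakening of the Hopf Galois characterization $ReR=B$ recalled in the proof of Proposition \ref{prop3-1}). Chaining the three equivalences then yields the proposition. The only mild subtlety — and what I would flag as the main obstacle — is making sure the formulation of \cite[Proposition 1.3]{HVZ} aligns with ``$\GKdim(B/BeB)=0$''; the noetherian finiteness remark above makes this translation immediate.
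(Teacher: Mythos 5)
Your proposal is essentially the paper's own proof: the paper deduces the proposition exactly by combining the definition (\ref{pty1}) with Lemma \ref{lem1} and \cite[Proposition 1.3]{HVZ}, which is the chain of equivalences you describe. One small inaccuracy worth noting (harmless here, and glossed over by the paper as well): ``noetherian'' does not by itself make $B/BeB$ finitely generated as a $\kk$-algebra, so the translation between $\GKdim(B/BeB)=0$ and finite-dimensionality should be justified by the form of \cite[Proposition 1.3]{HVZ} rather than by that remark.
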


The following result gives an example of Hopf dense Galois extensions.

\begin{proposition} Let $R=\kk\oplus R_1\oplus R_2\oplus\cdots$ be a neotherian connected graded algebra generated by elements $x_1,\dots,x_m$ of degree one with finite GK-dimension. Let $G$ be the cyclic group generated by the automorphism defined by $\sigma(x_i)=\xi x_i$ for $i=1,\dots,m$, where $\xi$ is an $n$-th primitive root of unity.  If $\ch\kk\nmid n$, then $\Pty(R,G)=\GKdim(R)$.
\end{proposition}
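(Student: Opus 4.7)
The plan is to show that $\mfr(R,G)$ contains all sufficiently high graded components of $R$, so that the pertinency algebra $\mathfrak{R}(R,G)=R/\mfr(R,G)$ is finite-dimensional; this gives $\GKdim(\mathfrak{R}(R,G))=0$ and hence $\Pty(R,G)=\GKdim(R)$ by the definition (\ref{pty1}).

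The key input is Lemma \ref{lem2}. First I would observe that $\sigma$ acts on $R_1=\kk x_1+\cdots+\kk x_m$ by the scalar $\xi$, and since $\xi$ is a primitive $n$-th root of unity and $R$ is generated in degree one, the order of $\sigma$ is exactly $n$, so $|G|=n$. Therefore, for any $a_1,\ldots,a_n\in R_1$ we have $\sigma\cdot a_i=\xi a_i$, and Lemma \ref{lem2} (applicable because $\ch\kk\nmid n$) yields
\[
a_1a_2\cdots a_n\in\mfr(R,G).
\]
Since $R$ is generated in degree one, $R_n$ is spanned by such products, so $R_n\subseteq\mfr(R,G)$.

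Next, because $\mfr(R,G)$ is a two-sided ideal and $R$ is generated in degree one, we get
\[
R_{\geq n}=\sum_{k\geq n}R_1^{\,k-n}R_n\;\subseteq\;\mfr(R,G).
\]
Consequently $\mathfrak{R}(R,G)=R/\mfr(R,G)$ is a quotient of $R/R_{\geq n}=\kk\oplus R_1\oplus\cdots\oplus R_{n-1}$. Since $R$ is noetherian, connected graded, and generated in degree one by finitely many elements, each $R_i$ is finite-dimensional, so $R/R_{\geq n}$ is finite-dimensional. Hence $\GKdim(\mathfrak{R}(R,G))=0$, and $\Pty(R,G)=\GKdim(R)-0=\GKdim(R)$.

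There is no real obstacle here: the only thing to be careful about is invoking Lemma \ref{lem2} with the correct $n$ (the order of $G$, which coincides with the order of $\xi$), and verifying that $\mfr(R,G)$ really absorbs all of $R_{\geq n}$, which is immediate from the ideal property together with the degree-one generation of $R$.
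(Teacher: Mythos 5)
Your proposal is correct and follows essentially the same route as the paper: apply Lemma \ref{lem2} to elements of $R_1$ (on which $\sigma$ acts by the scalar $\xi$) to get $R_n\subseteq\mfr(R,G)$, use degree-one generation and the ideal property to conclude $R_{\ge n}\subseteq\mfr(R,G)$, so $\mathfrak{R}(R,G)$ is finite-dimensional and $\Pty(R,G)=\GKdim(R)$. Your extra remark that the order of $\sigma$ equals $n$ is a harmless clarification that the paper leaves implicit.
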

\begin{proof} For any elements $a_1,\dots,a_n\in R_1$, we see $\sigma(a_i)=\xi a_i$ for $i=1,\dots,n$. By Lemma \ref{lem2}, we have $a_1a_2\cdots a_n\in \mfr(R,G)$. Note that $R_k=(R_1)^k$ for all $k\ge1$. Then we see $R_n\subseteq \mfr(R,G)$, and hence $R_i\subseteq \mfr(R,G)$ for all $i\ge n$. Therefore the pertinency algebra $\mathfrak{R}(R,G)$ is finite dimensional. Hence $\Pty(R,G)=\GKdim (R)$.
\end{proof}

Let $R=\kk_{-1}[x_1,\dots,x_n]$ $(n\ge2)$ be the skew-symmetric algebra. Let $\sigma$ be the automorphism of $R$ defined by $\sigma(x_i)=x_{i+1}$ for $i=1,\dots,n-1$ and $\sigma(x_n)=x_1$. Let $G\leq \Aut(R)$ be the subgroup generated by $\sigma$. The pertinency $\Pty(R,G)$ was computed in \cite{BHZ}. We give an alternative computation in this paper.

We may choose another set of generators for $R$ so that $\sigma$ acts on the generators diagonally. Let $\xi$ be an $n$-th primitive root of unity. For $j=1,\dots,n$, let $y_j=\sum_{i=1}^n\xi^{ji}x_i$. Then $\{y_1,\dots,y_n\}$ is a set of linear independent generators of $R$, and $\sigma(y_j)=\xi^{-j}y_i$ for $j=1,\dots,n$. For $1\leq k\leq n-1$, we have $$\sum_{i=0}^{n-1}y_j^i\sigma^k(y_j^{n-i})=\sum_{i=0}^{n-1}\xi^{-jk(n-i)}y_j^n.$$ If $\gcd(j,n)=1$, then $n\nmid jk$. In this case, $\sum_{i=0}^{n-1}\xi^{-jk(n-i)}=0$, and hence $\sum_{i=0}^{n-1}y_j^i\sigma^k(y_j^{n-i})=0$, which in turn implies that the sequences $(1,y_j,\dots,y_j^{n-1})$ and $(y_j^n,y_j^{n-1},\dots,y_j)$ are pertinent under the $G$-action. Hence we have the following result.

\begin{lemma} \label{lem3} With the notations as above, if $\ch\kk\nmid n$ and $\gcd(j,n)=1$, then $y_j^n\in \mfr(R,G)$.
\end{lemma}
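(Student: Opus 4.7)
The proof essentially falls out of the computation already carried out in the paragraph just before the statement. The plan is to verify that the two sequences $(1,y_j,\dots,y_j^{n-1})$ and $(y_j^n,y_j^{n-1},\dots,y_j)$ are pertinent under the $G$-action and then read off the claim from the definition of $\mfr(R,G)$.

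First I would record the key identity. Since $\sigma(y_j)=\xi^{-j}y_j$, we have $\sigma^k(y_j^{n-i})=\xi^{-jk(n-i)}y_j^{n-i}$, and therefore
\[
\sum_{i=0}^{n-1} y_j^{i}\,\sigma^k(y_j^{n-i})\;=\;\Bigl(\sum_{i=0}^{n-1}\xi^{-jk(n-i)}\Bigr)y_j^{n}\;=\;\Bigl(\sum_{i=0}^{n-1}\xi^{jki}\Bigr)y_j^{n},
\]
using $\xi^{n}=1$. For any $1\le k\le n-1$ the assumption $\gcd(j,n)=1$ forces $jk\not\equiv 0\pmod{n}$, so $\xi^{jk}$ is a primitive nontrivial $n$-th root of unity; then the inner geometric sum vanishes. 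This is exactly the pertinency condition from Definition \ref{def-pelmt} applied to the two sequences above, so they are pertinent under the $G$-action.

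By the definition of $\mfr(R,G)$, the sum of the termwise products lies in $\mfr(R,G)$:
\[
\sum_{i=0}^{n-1} y_j^{i}\cdot y_j^{n-i}\;=\;n\,y_j^{n}\;\in\;\mfr(R,G).
\]
Finally, since $\ch\kk\nmid n$ the scalar $n$ is invertible in $\kk$, so $y_j^{n}\in\mfr(R,G)$, as required.

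There is essentially no obstacle here; the only thing to be careful about is getting the character sum right (distinguishing $\sum \xi^{-jk(n-i)}$ from $\sum \xi^{jki}$, which are equal because $\xi^n=1$) and remembering to invoke $\ch\kk\nmid n$ at the very end in order to divide by $n$. Everything else is a direct application of Definition \ref{def-pelmt}, so no further machinery (e.g.\ Lemma \ref{lem-pertinred} or Proposition \ref{prop-basic}) is needed.
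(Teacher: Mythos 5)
Your proof is correct and follows essentially the same route as the paper: the paragraph preceding the lemma establishes, via the same vanishing geometric sum, that $(1,y_j,\dots,y_j^{n-1})$ and $(y_j^n,y_j^{n-1},\dots,y_j)$ are pertinent, and the lemma then follows exactly as you say, with $\ch\kk\nmid n$ used both for the existence of the primitive root $\xi$ and to divide $n\,y_j^n$ by the scalar $n$. One cosmetic slip: for $1\le k\le n-1$ the element $\xi^{jk}$ need not be a \emph{primitive} $n$-th root of unity, only a nontrivial one, but that is all the vanishing of $\sum_{i=0}^{n-1}\xi^{jki}$ requires, so the argument is unaffected.
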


Let $A$ be the subalgebra of $R$ generated by $x_1^2,\dots,x^2_n$. Then $A$ is a subalgebra of the center of $R$. Note that $R$ is finitely generated as an $A$-module. For $j=1,\dots, n$, let $Y_j=\sum_{i=1}^n\xi^{ij}x_i^2$. Then $Y_1,\dots,Y_n$ are linearly independent generators of $R$ (cf. \cite{BHZ}). Let $A'=A/(A\bigcap \mfr(R,G))$.  It is clear that $A'$ is a subalgebra of $\mathfrak{R}(R,G)$ and $\mathfrak{R}(R,G)$ is finitely generated as an $R'$-module since $R$ is finitely generated as an $A$-module. So, $\GKdim (\mathfrak{R}(R,G))=\GKdim (R')$. For $j=1,\dots,n$, we have $y_j^2=\sum_{i=1}^n\xi^{2ij}x_i^2$. For $n+1\leq t\leq 2n$, we set $Y_t=Y_{t-n}$. Then we see $y^2_j=Y_{2j}$ for $j=1,\dots,n$. If $\gcd(j,n)=1$, then $y_j^n\in \mfr(R,G)$ by Lemma \ref{lem3}, which implies $y_j^{2n}\in \mfr(R,G)$, and hence $Y_{2j}^n\in \mfr(R,G)$. So, we obtain that $Y_{2j}^n=0$ in $R'$ if $\gcd(j,n)=1$. Now let $T$ be the subalgebra of $A'$ generated by elements of the set $\{Y_1,\dots,Y_n\}\setminus\{Y_{2j}|\gcd(j,n)=1\}$. Then $A'$ is finitely generated as a $T$-module. Hence $\GKdim(A')=\GKdim(T)$. Let $t$ be the cardinality of the set $\{Y_{2j}|\gcd(j,n)=1\}$. The commutative subalgebra $T$ is generated by $n-t$ elements. Hence $\GKdim(T)\leq n-t$. Hence $$\Pty(R,G)=n-\GKdim(\mathfrak{R}(R,G))=n-\GKdim(T)\ge t.$$ Note that $$t=\begin{cases} \phi(n), & \hbox{if }4\nmid n;\\
\frac{\phi(n)}{2}, &\hbox{if } 4|n,\end{cases} $$  where $\phi(n):=n \prod_{{\text{all primes $p\mid n$}}}
(1-\frac{1}{p})$ is the Euler's totient function (cf. \cite{BHZ}).

Summarizing the above narratives, we have provided a simpler proof for \cite[Theorem 5.7(iii)]{BHZ}.

\begin{theorem}\cite{BHZ} Assume $\ch\kk\nmid n$ $(n\ge2)$. Let $R=\kk_{-1}[x_1,\dots,x_n]$, and $G$ the subgroup of $\Aut(A)$ generated by the automorphism $\sigma$ defined by $\sigma(x_i)=x_{i+1}$ for $1\leq i\leq n-1$ and $\sigma(x_n)=x_1$. Then we have $$\Pty(R,G)\ge \begin{cases} \phi(n), &\hbox{if }4\nmid n;\\
\frac{\phi(n)}{2}, & \hbox{if }4\mid n,\end{cases}$$ where $\phi(n):=n \prod_{{\text{all primes $p\mid n$}}}
(1-\frac{1}{p})$ is the Euler's totient function.
\end{theorem}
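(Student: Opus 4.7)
The plan is to carry out the strategy already sketched in the paragraphs preceding the theorem, verifying each step and completing the combinatorial count at the end. First I would diagonalize $\sigma$ by changing to the generators $y_j = \sum_{i=1}^{n} \xi^{ji} x_i$ for $j = 1,\dots,n$, where $\xi$ is a primitive $n$-th root of unity. A direct computation shows $\sigma(y_j) = \xi^{-j} y_j$, so when $\gcd(j,n)=1$ the scalar $\xi^{-j}$ is again a primitive $n$-th root of unity, and Lemma \ref{lem2} applied with $a_1 = \cdots = a_n = y_j$ yields $y_j^n \in \mfr(R,G)$.

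Next I would exploit the central subalgebra $A \subseteq R$ generated by $x_1^2,\dots,x_n^2$. Since $R$ is a finitely generated $A$-module, the image $A'$ of $A$ in $\mathfrak{R}(R,G)$ satisfies $\GKdim \mathfrak{R}(R,G) = \GKdim A'$ by \cite[Proposition 5.5]{KL}. Using the second change of generators $Y_j = \sum_{i=1}^{n} \xi^{ij} x_i^2$ (with the convention $Y_t = Y_{t-n}$ for $n < t \le 2n$), I would observe the key identity $y_j^2 = Y_{2j}$; combined with the previous step this forces $Y_{2j}^n = 0$ in $A'$ whenever $\gcd(j,n)=1$. Letting $T \subseteq A'$ be the commutative subalgebra generated by $\{Y_1,\dots,Y_n\} \setminus \{Y_{2j} : \gcd(j,n)=1\}$, the nilpotency of the omitted generators gives that $A'$ is finitely generated as a $T$-module, hence $\GKdim A' = \GKdim T \le n - t$, where $t = |\{Y_{2j} : \gcd(j,n) = 1\}|$. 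Since $\GKdim R = n$, this gives $\Pty(R,G) \ge t$.

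The remaining task is to verify that $t = \phi(n)$ when $4 \nmid n$ and $t = \phi(n)/2$ when $4 \mid n$. This amounts to computing the cardinality of the image of the doubling map $j \mapsto 2j$ on $(\Z/n\Z)^*$, viewed inside $\{1,\dots,n\}$. The fibre over $2j$ consists of $j$ and $j + n/2$ (the latter only relevant when $n$ is even). The point I would need to check carefully is that for $4 \mid n$, if $j$ is coprime to $n$ then so is $j+n/2$: an odd prime divisor of $\gcd(j+n/2, n)$ would also divide $2j$ and hence $j$, contradicting $\gcd(j,n)=1$; and $2$ cannot divide $j + n/2$ since $j$ is odd and $n/2$ is even. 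Conversely, when $n \equiv 2 \pmod 4$, $n/2$ is odd, so $j + n/2$ changes parity and cannot be coprime to the even $n$; thus the map is injective on $(\Z/n\Z)^*$. For $n$ odd, $\gcd(2,n)=1$ makes doubling a bijection of $(\Z/n\Z)^*$ onto itself.

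The only non-routine step is this final combinatorial count, and within it the verification that $j \mapsto j + n/2$ really preserves coprimality to $n$ when $4 \mid n$; every other ingredient is either a direct change-of-basis computation or an invocation of Lemma \ref{lem2} together with the standard behaviour of GK-dimension under finite module extensions. I would therefore present the argument in the three stages above, relegating the coprimality verification to a brief lemma or remark immediately before the case split.
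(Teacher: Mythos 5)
Your proposal is correct and follows essentially the same route as the paper's own computation: diagonalize $\sigma$ via the $y_j$, apply Lemma \ref{lem2} to get $y_j^n\in\mfr(R,G)$ for $\gcd(j,n)=1$, pass to the central subalgebra generated by the $x_i^2$ and use $y_j^2=Y_{2j}$ to bound $\GKdim$ of the pertinency algebra by $n-t$. The only difference is that you also verify the arithmetic count $t=\phi(n)$ (resp.\ $\phi(n)/2$), which the paper simply cites from \cite{BHZ}, and your verification of that count is correct.
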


Now we consider the down-up algebra: $R=\kk\langle x,y\rangle/(r_1,r_2)$, where $r_1=x^2y-\alpha xyx-\beta yx^2$ and $r_2=x y^2-\alpha yxy-\beta y^2 x$. The pertinencies of finite group actions on $R$ were computed in \cite{BHZ2} when $\beta\neq-1$ or $\beta=-1$ but $\alpha=2$. We have the following result.

\begin{proposition} Let $R$ be a down-up algebra such that $\beta=-1$ and $\alpha\neq 2$. Let $G=\{1,\sigma\}$ where $\sigma$  is the automorphism of $R$ defined by $\sigma(x)=ay$ and $\sigma(y)=a^{-1}x$ with $a\neq 0$. If $\ch \kk=0$, then $\Pty(R,G)=3$.
\end{proposition}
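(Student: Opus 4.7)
The plan is to exhibit a single degree-one element of $\mfr(R,G)$ whose generated two-sided ideal already cuts $R$ down to a finite-dimensional quotient, forcing $\GKdim(\mathfrak{R}(R,G))=0$ and hence $\Pty(R,G)=\GKdim(R)=3$.

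First, set $v:=x-ay$. Since $\sigma(x-ay)=ay-a\cdot a^{-1}x=-v$, the element $v$ is a $(-1)$-eigenvector of $\sigma$. The two sequences $(v,1)$ and $(1,v)$ are pertinent under the $G$-action because $v\cdot\sigma(1)+1\cdot\sigma(v)=v-v=0$, and the associated element is $v\cdot 1+1\cdot v=2v$; as $\ch\kk=0$ this shows $v\in\mfr(R,G)$. Since $\mfr(R,G)$ is a two-sided ideal, the ideal $(v)=RvR$ is contained in $\mfr(R,G)$, and $\mathfrak{R}(R,G)$ is a quotient of $R/(v)$.

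Second, I would compute $R/(v)$ explicitly. In this quotient $x=ay$, so the algebra is generated by the image of $y$ alone. Substituting $x=ay$ in the defining relations (recall $\beta=-1$):
\[
r_1=x^2y-\alpha xyx+yx^2\ \mapsto\ a^2(2-\alpha)y^3,\qquad r_2=xy^2-\alpha yxy+y^2x\ \mapsto\ a(2-\alpha)y^3.
\]
Since $\alpha\ne 2$ and $a\ne 0$, either image forces $y^3=0$, so $R/(v)$ is a quotient of $\kk[y]/(y^3)$, hence at most three-dimensional. In particular $\mathfrak{R}(R,G)$ is finite-dimensional.

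Finally, the down-up algebra with $\beta=-1$ is noetherian of GK-dimension $3$, while the GK-dimension of a finite-dimensional algebra is $0$, so $\Pty(R,G)=\GKdim(R)-\GKdim(\mathfrak{R}(R,G))=3$. The only delicate point is the collapse of $r_1$ and $r_2$ under $x\mapsto ay$: each length-three monomial $x^iy^jx^k$ becomes a scalar multiple of $y^{i+j+k}$ and the three coefficients of $r_1$ (respectively $r_2$) combine to give the factor $2-\alpha$, which is precisely where the hypothesis $\alpha\ne 2$ enters (the hypothesis $\ch\kk=0$ being used earlier only to divide $2v$ by $2$).
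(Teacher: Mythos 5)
Your proposal is correct and follows essentially the same route as the paper: produce the degree-one element $x-ay$ in $\mfr(R,G)$ via a pertinent pair, substitute $x=ay$ into the defining relations to force $y^3=0$ so the pertinency algebra is finite-dimensional, and conclude $\Pty(R,G)=\GKdim(R)=3$. The only cosmetic difference is that the paper uses the pertinent sequences $(1,-x)$ and $(ay,1)$ to get $ay-x$ directly, whereas you obtain $2v$ and divide by $2$.
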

\begin{proof} We see that the sequences $(1,-x)$ and $(ay,1)$ are pertinent under the $G$-action. Hence $ay-x\in \mfr(R,G)$. Therefore $x=ay$ in the pertinency algebra $\mathfrak{R}(R,G)$. By definition, $x^2y=\alpha xyx-yx^2$ in $R$, and hence it holds in $\mathfrak{R}(R,G)$. Combining the relations $x=ay$ and $x^2y=\alpha xyx-yx^2$ in $\mathfrak{R}(R,G)$, we see $y^3=0$ in $\mathfrak{R}(R,G)$ since $\alpha\neq 2$. Therefore, $\mathfrak{R}(R,G)$ is finite dimensional. Hence $\Pty(R,G)=3$ as $\GKdim (R)=3$ (cf. \cite{KMP}).
\end{proof}

\begin{remark} The pertinency of a permutation group acting on $\kk_{-1}[x_1,\dots, x_n]$ or on a down-up algebra was recently computed by Gaddis, Kirkman, Moore and Won using different methods \cite{GKMW}.
\end{remark}

\section{Local cohomology}\label{sec5}
In this section, $A$ is always a noetherian algebra, and $\mfa$ is an ideal of $A$.
Denote by $\Mod A$ the category of right $A$-modules, by $\Mod A^\circ$ the category of left $A$-modules, and by $\Mod A^e$ the category of $A$-$A$-bimodules. Let $\Gamma_\mfa=\underrightarrow{\lim}\Hom_A(A/\mfa^n,-):\Mod A\longrightarrow \Mod A$.
For $M\in\Mod A$, if $\Gamma_\mfa(M)=M$, then we say that $M$ is an {\it $\mfa$-torsion module}. If $\Gamma_\mfa(M)=0$, then we say that $M$ is {\it $\mfa$-torsion-free}.

We say that $\mfa$ has the {\it right Artin-Rees (AR) property} if one of the following equivalent conditions holds \cite[4.2.3]{MR}
\begin{itemize}
  \item [(i)] For every right ideal $\mfb$ of $A$, $\mfb\cap \mfa^n\subseteq \mfb\mfa$ for some $n$.
  \item [(ii)] For every finitely generated right $A$-module $M$, and every submodule $N\subseteq M$, $N\cap M\mfa^n\subseteq N\mfa$ for some $n$.
  \item [(iii)] For every finitely generated right $A$-module $M$, and every submodule $N\subseteq M$, and for every integer $s\ge0$, there is an integer $n>0$ such that $N\cap M\mfa^n\subseteq N\mfa^s$.
\end{itemize}

\begin{lemma}\label{lem-torinj} Let $A$ be a noetherian algebra, and $\mfa$ an ideal of $A$.  Let $I$ be the injective envelope of $N\in\Mod A$.
\begin{itemize}
  \item [(i)] If $N$ is $\mfa$-torsion-free, so is $I$.
  \item [(ii)] Assume further that $\mfa$ has the right AR-property. If $N$ is an $\mfa$-torsion module, so is $I$.
\end{itemize}
\end{lemma}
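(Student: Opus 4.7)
The plan is to pull any hypothetical offending element of $I$ back into $N$ via the essentiality of $N$ in $I$, reducing both parts to the corresponding property of $N$ itself.

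For (i), suppose for contradiction that $0 \neq x \in I$ satisfies $x\mfa^n = 0$ for some $n$. Essentiality produces $0 \neq y = xa \in xA \cap N$ for some $a \in A$. Since $\mfa$ is a two-sided ideal, $a\mfa^n \subseteq \mfa^n$, so $y\mfa^n = xa\mfa^n \subseteq x\mfa^n = 0$. Thus $y$ is a nonzero $\mfa$-torsion element of $N$, contradicting $\Gamma_\mfa(N)=0$.

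For (ii), fix $0 \neq x \in I$ and aim to show $x\mfa^n = 0$ for some $n$. Because $A$ is right noetherian, $xA$ is a noetherian module, so $N' := xA \cap N$ is finitely generated; each generator is annihilated by some power of $\mfa$, and the maximum of these exponents gives $m$ with $N'\mfa^m = 0$. Now apply the right AR property in form (iii) to $M = xA$ with submodule $N'$ and $s = m$: there exists $n$ such that
\[
N' \cap (xA)\mfa^n \subseteq N'\mfa^m = 0.
\]
Since $(xA)\mfa^n \subseteq xA$, this reads $(xA)\mfa^n \cap N = 0$, and essentiality of $N$ in $I$ then forces $(xA)\mfa^n = 0$; in particular $x\mfa^n = 0$, as desired.

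The main obstacle, and the reason for imposing the AR hypothesis in (ii), is the absence of a uniform annihilator exponent on $N$: the torsion hypothesis alone only supplies an exponent depending on each individual element. Form (iii) of AR is exactly what converts the finite generation of $N' = xA \cap N$ together with the pointwise exponent $m$ into a uniform power of $\mfa$ killing all of $(xA)\mfa^n$; the weaker descent $N'\cap M\mfa^n \subseteq N'\mfa$ would drop only into $N'\mfa$, not into zero, and would not suffice here. Once the descent to zero is achieved, the essentiality of $N$ in $I$ closes the argument mechanically, and the two-sidedness of $\mfa$ used in (i) is again implicit in the inclusion $(xA)\mfa^n \subseteq xA$ that makes the essentiality step meaningful.
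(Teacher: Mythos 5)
Your proof is correct and follows essentially the same route as the paper: part (i) is the essentiality argument applied to the torsion part, and part (ii) intersects $xA$ with $N$, uses noetherianness to get a uniform annihilating power of $\mfa$, applies the AR-property in form (iii), and concludes by essentiality (the paper phrases the last step via essentiality of $N\cap xA$ in $xA$ rather than of $N$ in $I$, an immaterial variation).
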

\begin{proof} Since $I$ is the injective envelope of $N$, $N$ is an essential submodule of $I$. If $\Gamma_\mfa(I)\neq0$, then $\Gamma_\mfa(I)\cap N\neq 0$. So, if $N$ is $\mfa$-torsion-free, $\Gamma_\mfa(I)$ has to be zero. The statement (i) follows.

(ii) Take an element $x\in I$. Let $K=N\cap xA$. Then $K$ is an essential submodule of $xA$. Since $A$ is noetherian, $K$ is finitely generated. Since $N$ is an $\mfa$-torsion module, so is $K$. Thus $K\mfa^n=0$ for some $n>0$. By the AR-property, there is an integer $s$ such that $K\cap x\mfa^s\subseteq K\mfa^n=0$. As $K$ is essential in $xA$, it follows that $x\mfa^s=0$. Therefore, $\Gamma_\mfa(I)=I$.
\end{proof}

The functor $\Gamma_\mfa$ is left exact. We write the $i$-th right derived functor as $$R^i\Gamma_\mfa=\underrightarrow{\lim}\Ext_A^i(A/\mfa^n,-).$$ Let $M$ be a right $A$-module. Define $$\depth_\mfa(M)=\inf\{i|R^i\Gamma_\mfa(M)\neq0\}\subseteq\mathbb{N}\cup\{\infty\}.$$

\begin{lemma} \label{lem-injres} Let $M$ be a right $A$-module. Assume $\depth_\mfa(M)=d>0$. Let $0\to M\to I^0\overset{\delta^0}\to I^1\overset{\delta^1}\to\cdots \overset{\delta^n-1}\to I^n\overset{\delta^n}\to\cdots$ be a minimal injective resolution of $M$. Then $I^i$ is $\mfa$-torsion-free for $i<d$.
\end{lemma}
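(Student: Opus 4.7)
The plan is to induct on $i$, using the cocycles $M^j := \ker(\delta^j)$ (with $M^0 = M$) together with the standard dimension-shift for local cohomology across the short exact sequences
\[
0 \longrightarrow M^{j-1} \longrightarrow I^{j-1} \longrightarrow M^j \longrightarrow 0,
\]
which are available because the resolution is minimal (so each $I^j$ is the injective envelope of $M^j$).

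For the base case $i = 0$, since $d > 0$ we have $\Gamma_\mfa(M) = R^0\Gamma_\mfa(M) = 0$, so $M$ is $\mfa$-torsion-free, and Lemma \ref{lem-torinj}(i) applied to the injective envelope $I^0$ of $M$ gives that $I^0$ is $\mfa$-torsion-free.

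For the inductive step, suppose $0 < i < d$ and that $I^0, \ldots, I^{i-1}$ have already been shown to be $\mfa$-torsion-free, i.e.\ $\Gamma_\mfa(I^j) = 0$ for $j < i$. Since each $I^j$ is injective, it is $\Gamma_\mfa$-acyclic (it is its own injective resolution), so the long exact sequence attached to $0 \to M^{j-1} \to I^{j-1} \to M^j \to 0$ gives $R^k\Gamma_\mfa(M^j) \cong R^{k+1}\Gamma_\mfa(M^{j-1})$ for every $k \geq 1$, and, using the inductive hypothesis $\Gamma_\mfa(I^{j-1}) = 0$, also the boundary piece $\Gamma_\mfa(M^j) \cong R^1\Gamma_\mfa(M^{j-1})$. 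Chaining these isomorphisms for $j = 1, 2, \ldots, i$ yields
\[
\Gamma_\mfa(M^i) \;\cong\; R^1\Gamma_\mfa(M^{i-1}) \;\cong\; R^2\Gamma_\mfa(M^{i-2}) \;\cong\; \cdots \;\cong\; R^i\Gamma_\mfa(M) \;=\; 0,
\]
the last equality holding because $i < d = \depth_\mfa(M)$. Therefore $M^i$ is $\mfa$-torsion-free, and since $I^i$ is the injective envelope of $M^i$ by minimality, Lemma \ref{lem-torinj}(i) finishes the step.

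There is no real obstacle here; the only point requiring care is the handling of the $k = 0$ stage of the long exact sequence, which is precisely where the inductive hypothesis $\Gamma_\mfa(I^{j-1}) = 0$ (rather than mere acyclicity of $I^{j-1}$ in positive degrees) is needed to secure the isomorphism $\Gamma_\mfa(M^j) \cong R^1\Gamma_\mfa(M^{j-1})$. Note that the AR-property is not invoked in this argument; only part (i) of Lemma \ref{lem-torinj} is used.
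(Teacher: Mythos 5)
Your proof is correct and runs along essentially the same lines as the paper's: an induction on $i$ using minimality of the resolution, torsion-freeness of the earlier terms, and the vanishing $R^i\Gamma_\mfa(M)=0$ for $i<d$. The only difference is presentational: the paper phrases the inductive step as a direct computation $R^{i+1}\Gamma_\mfa(M)=\Gamma_\mfa(I^{i+1})\cap\ker\delta^{i+1}\neq 0$ (a contradiction, using $\Gamma_\mfa(I^i)=0$ and essentiality of $\ker\delta^{i+1}$ in $I^{i+1}$), whereas you dimension-shift along the cosyzygies and then apply Lemma \ref{lem-torinj}(i) to the injective envelope $I^i$ of $M^i$.
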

\begin{proof} Since $d>0$, $M$ is $\mfa$-torsion-free. Hence $I^0$ is $\mfa$-torsion-free. Assume $I^i$ $(i<d-1)$ is $\mfa$-torsion-free. If $\Gamma_\mfa(I^{i+1})\neq0$, then $\Gamma_\mfa(I^{i+1})\cap \ker\delta^{i+1}\neq0$. Since $I^i$ is torsion free, $R^{i+1}\Gamma_\mfa(M)=\Gamma_\mfa(I^{i+1})\cap\ker\delta^{i+1}\neq0$, which contradicts with the hypothesis that $\depth_\mfa(M)=d$.
\end{proof}

\begin{lemma} \label{lem-sum} The derived functor $R^i\Gamma_\mfa$ $(i\ge0)$ commutes with direct sums.
\end{lemma}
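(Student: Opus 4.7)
The plan is to reduce the claim to two standard facts: filtered colimits commute with arbitrary direct sums, and hom out of a finitely generated projective module commutes with direct sums. First, starting from the definition $\Gamma_\mfa = \underrightarrow{\lim}_n \Hom_A(A/\mfa^n, -)$ and invoking exactness of filtered colimits in $\Mod A$, I would show that for any injective resolution $I^\bullet$ of $M$ one has
\[ R^i\Gamma_\mfa(M) \;=\; H^i\!\left(\underrightarrow{\lim}_n \Hom_A(A/\mfa^n, I^\bullet)\right) \;\cong\; \underrightarrow{\lim}_n \Ext^i_A(A/\mfa^n, M). \]

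Next, since $A$ is noetherian, each $A/\mfa^n$ admits a projective resolution $P_\bullet \to A/\mfa^n$ by finitely generated projective right $A$-modules. Because each $P_k$ is a direct summand of a finitely generated free module, the functor $\Hom_A(P_k,-)$ commutes with arbitrary direct sums, and passing to cohomology yields
\[ \Ext^i_A\!\left(A/\mfa^n,\; \bigoplus_\alpha M_\alpha\right) \;\cong\; \bigoplus_\alpha \Ext^i_A(A/\mfa^n, M_\alpha) \]
for every $i$ and every $n$.

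Finally, I would combine these two displays with the observation that colimits commute with colimits (so the filtered colimit in $n$ commutes with the direct sum over $\alpha$) to obtain
\[ R^i\Gamma_\mfa\!\left(\bigoplus_\alpha M_\alpha\right) \;\cong\; \underrightarrow{\lim}_n \bigoplus_\alpha \Ext^i_A(A/\mfa^n, M_\alpha) \;\cong\; \bigoplus_\alpha R^i\Gamma_\mfa(M_\alpha). \]
There is no serious obstacle; the only step that deserves a moment's care is the interchange of $H^i$ and $\underrightarrow{\lim}_n$ in the first display, which rests on the exactness of filtered colimits in the category of right $A$-modules and would need to be cited or invoked explicitly.
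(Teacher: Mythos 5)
Your proof is correct and follows essentially the same route as the paper: identify $R^i\Gamma_\mfa$ with $\underrightarrow{\lim}_n\Ext^i_A(A/\mfa^n,-)$, use noetherianness of $A$ to see that each $\Ext^i_A(A/\mfa^n,-)$ commutes with direct sums, and then commute the filtered colimit with the direct sum. The paper states these three facts without elaboration, whereas you supply the justifications (finitely generated projective resolutions, exactness of filtered colimits); no discrepancy in substance.
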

\begin{proof} Note that $R^i\Gamma_\mfa=\underrightarrow{\lim}\Ext^i_A(A/\mfa^n,-)$. Since $A$ is noetherian, $\Ext^i_A(A/\mfa^n,-)$ commutes with direct sums. The direct limit $\underrightarrow{\lim}$ also commutes with direct sums. Hence $R^i\Gamma_\mfa$ commutes with direct sums.
\end{proof}

Let $B$ be another algebra, and let $M$ be a $B$-$A$-bimodule. Then $\Gamma_\mfa(M)$ is a $B$-$A$-bimodule. By taking the injective resolution of the $B$-$A$-bimodule $M$, we see that $R^i\Gamma_\mfa(M)$ is a $B$-$A$-bimodule for every $i\ge0$. In particular, $R^i\Gamma_\mfa(A)$ is an $A$-$A$-bimodule for every $i\ge0$.

\begin{corollary} \label{cor-tens} Let $B$ be an algebra, and let $P$ be a projective right $B$-module. For every $B$-$A$-bimodule $M$. We have $R^i\Gamma_\mfa(P\otimes_B M)=P\otimes_BR^i\Gamma_\mfa(M)$ for every $i\ge0$.
\end{corollary}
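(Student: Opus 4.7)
The plan is to construct a natural comparison morphism $\alpha^i_P\colon P\otimes_B R^i\Gamma_\mfa(M)\to R^i\Gamma_\mfa(P\otimes_B M)$ in each degree $i\ge 0$, and then prove it is an isomorphism by the standard reduction: projective modules are direct summands of free modules, and free $B$-modules turn $P\otimes_B(-)$ into a direct sum, where Lemma \ref{lem-sum} is exactly what is needed.

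In degree zero, I would define $\alpha^0_P$ by $p\otimes x\mapsto p\otimes x$. If $x\in\Gamma_\mfa(M)$, so $x\mfa^n=0$ for some $n$, then $(p\otimes x)\mfa^n=p\otimes x\mfa^n=0$, so the image lies in $\Gamma_\mfa(P\otimes_B M)$, and the assignment is bifunctorial in both $P$ and $M$. Because $P$ is projective over $B$, it is flat, so $P\otimes_B(-)$ is exact. This has two consequences: first, $\{P\otimes_B R^i\Gamma_\mfa(-)\}_{i\ge 0}$ is itself the right derived functor of $P\otimes_B\Gamma_\mfa(-)$ and is therefore a universal cohomological $\delta$-functor; second, $\{R^i\Gamma_\mfa(P\otimes_B-)\}_{i\ge 0}$ is also a cohomological $\delta$-functor. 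The universal property extends $\alpha^0_P$ uniquely to a morphism of $\delta$-functors $\{\alpha^i_P\}_{i\ge 0}$, natural in both $P$ and $M$.

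For $P=B$, $\alpha^i_B$ reduces to the identity on $R^i\Gamma_\mfa(M)$, hence is an isomorphism. For an arbitrary free right $B$-module $P=B^{(J)}$, one has $B^{(J)}\otimes_B M\cong M^{(J)}$ and $B^{(J)}\otimes_B R^i\Gamma_\mfa(M)\cong R^i\Gamma_\mfa(M)^{(J)}$; Lemma \ref{lem-sum} gives $R^i\Gamma_\mfa(M^{(J)})\cong R^i\Gamma_\mfa(M)^{(J)}$, and tracing the construction shows that this identification is exactly $\alpha^i_{B^{(J)}}$.

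For general projective $P$, pick $Q$ with $P\oplus Q\cong F$ for some free $B$-module $F$. Naturality of $\alpha^i_{(-)}$ with respect to the split inclusion $P\hookrightarrow F$ and projection $F\twoheadrightarrow P$, combined with the additivity of the functors $(-)\otimes_B R^i\Gamma_\mfa(M)$ and $R^i\Gamma_\mfa((-)\otimes_B M)$, yields $\alpha^i_F\cong \alpha^i_P\oplus \alpha^i_Q$. Since $\alpha^i_F$ is an isomorphism by the previous paragraph, so is its direct summand $\alpha^i_P$. The main technical obstacle I foresee is in the first step: carefully setting up $\alpha^i_P$ as a natural transformation of $\delta$-functors so that it is compatible with the direct sum decompositions used later. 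Once that piece of naturality is in place, the rest is a standard free-to-projective reduction.
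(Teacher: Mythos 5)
Your proof is correct and follows exactly the route the paper intends: the corollary is stated without proof precisely because it is meant to follow from Lemma \ref{lem-sum} together with the standard reduction (projective $=$ summand of free, free $=$ direct sum of copies of $B$), which is what you carry out. Your extra care in building the comparison map as a morphism of $\delta$-functors (using flatness of $P$ and acyclicity of injectives) is a legitimate way to make the naturality needed for the summand argument precise, and does not diverge from the paper's approach.
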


We write $D(\Mod A)$ for the (unbounded) derived category of right $A$-modules, and $D^b(\Mod A)$ (resp. $D^-(\Mod A)$) for the bounded (resp. bounded above) derived category of right $A$-modules. $M^\cdot$ stands for an object in the derived category of right $A$-modules.

\begin{theorem}\label{thm-locdual} Let $A$ be a noetherian algebra, and let $\mfa$ be an ideal of $A$. Assume that $\Gamma_\mfa$ has finite cohomological dimension, or $\id{}_AA<\infty$ and $\id A_A<\infty$. Then we have the following statements.
\begin{itemize}
  \item [(i)] For every $M^\cdot\in D^-(\Mod A)$, $$R\Gamma_\mfa(M^\cdot)\cong M^\cdot\otimes_A^L R\Gamma_\mfa(A)$$ in $D^-(\Mod A)$.
  \item [(ii)] For every $M^\cdot\in D^-(\Mod A)$, $$R\Gamma_\mfa(M^\cdot)^*\cong \RHom_A(M^\cdot,R\Gamma_\mfa(A)^*)$$ in $D^+(\Mod A^\circ)$.
\end{itemize}
\end{theorem}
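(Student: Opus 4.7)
The plan is to prove part (i) by constructing a natural comparison morphism and verifying it is an isomorphism via dévissage on $M^\cdot$; part (ii) will then follow from part (i) by the Hom-tensor adjunction.

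First I would construct a natural transformation
$$\eta_{M^\cdot}: M^\cdot \otimes_A^L R\Gamma_\mfa(A) \longrightarrow R\Gamma_\mfa(M^\cdot)$$
in $D^-(\Mod A)$. Using the presentation $R\Gamma_\mfa(-)=\underrightarrow{\lim}\,\RHom_A(A/\mfa^n,-)$, it arises from the canonical evaluation pairings $\RHom_A(A/\mfa^n, A) \otimes_A^L M^\cdot \to \RHom_A(A/\mfa^n, M^\cdot)$ followed by passing to the filtered colimit (which is exact, and so commutes past the derived functors involved). When $M^\cdot = A$, the map $\eta_A$ is the canonical identification and hence an isomorphism. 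When $M = P$ is any projective right $A$-module, Corollary \ref{cor-tens} (applied with $B = A$ and $M = A$) gives $R^i\Gamma_\mfa(P) \cong P \otimes_A R^i\Gamma_\mfa(A)$ for every $i\ge 0$; since $P$ is flat, the derived tensor product on the left is computed term-by-term by the same groups, so $\eta_P$ is a quasi-isomorphism.

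Next comes the dévissage to a general $M^\cdot \in D^-(\Mod A)$, and this is where the finiteness hypothesis enters. Under hypothesis (a), $R\Gamma_\mfa$ has finite cohomological dimension by assumption, so it is bounded as a functor on $D^-(\Mod A)$. Under hypothesis (b), applying $\Gamma_\mfa$ to a finite injective resolution of $A$ as a right $A$-module (whose terms are $\Gamma_\mfa$-acyclic by Lemma \ref{lem-torinj}) yields a bounded complex representing $R\Gamma_\mfa(A)$, whence $-\otimes_A^L R\Gamma_\mfa(A)$ has finite Tor-amplitude and is likewise bounded. In either case, both functors under comparison are bounded way-out left $\delta$-functors on $D^-(\Mod A)$; together with the fact that $\eta$ is a quasi-isomorphism on projectives, the standard way-out left argument promotes $\eta_{M^\cdot}$ to a quasi-isomorphism for every $M^\cdot \in D^-(\Mod A)$.

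For part (ii), the $\kk$-linear dual $(-)^* = \Hom_\kk(-,\kk)$ is an exact contravariant functor, and it satisfies the derived adjunction
$$\Hom_\kk(X^\cdot \otimes_A^L Y^\cdot,\kk) \cong \RHom_A(X^\cdot, \Hom_\kk(Y^\cdot,\kk)).$$
Specialising this to $X^\cdot = M^\cdot$ and $Y^\cdot = R\Gamma_\mfa(A)$, and combining with part (i), yields
$$R\Gamma_\mfa(M^\cdot)^* \cong (M^\cdot \otimes_A^L R\Gamma_\mfa(A))^* \cong \RHom_A(M^\cdot, R\Gamma_\mfa(A)^*),$$
as required. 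The main obstacle I anticipate is the dévissage in part (i): to pass from projectives to arbitrary $M^\cdot$ one must carefully verify the boundedness of both sides, and this is precisely the content of the finiteness hypothesis on either the cohomological dimension of $\Gamma_\mfa$ or the two-sided injective dimension of $A$.
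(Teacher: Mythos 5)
Your route is genuinely different from the paper's: the paper never builds a comparison morphism or invokes a way-out argument; it replaces $A$ by a bounded complex $E^\cdot$ of bimodules that are $\Gamma_\mfa$-acyclic on the right (the truncation of a bimodule injective resolution, using the finiteness hypothesis), replaces $M^\cdot$ by a projective resolution $P^\cdot$, and identifies both sides with the single complex $P^\cdot\otimes_A\Gamma_\mfa(E^\cdot)$, using Corollary \ref{cor-tens} to see that the terms of $P^\cdot\otimes_A E^\cdot$ are $\Gamma_\mfa$-acyclic. Your functorial argument (evaluation map, isomorphism on projectives via Corollary \ref{cor-tens}, dévissage) is a legitimate alternative and works under the first hypothesis, where $\Gamma_\mfa$ has finite cohomological dimension: there both $R\Gamma_\mfa$ and $-\otimes_A^L R\Gamma_\mfa(A)$ are indeed way-out left on $D^-(\Mod A)$, every module is a quotient of a projective, and the way-out lemma applies. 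Part (ii) is the same one-line dualization as in the paper.

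The gap is in the second hypothesis, $\id {}_AA<\infty$ and $\id A_A<\infty$. You assert that ``in either case both functors under comparison are bounded way-out left,'' but under this hypothesis you only justify boundedness of the tensor side; you give no reason why $R\Gamma_\mfa$ itself is way-out left, and none is available: finite two-sided injective dimension of $A$ bounds $\Ext^i_A(A/\mfa^n,A)$, not $\Ext^i_A(A/\mfa^n,M)$ for arbitrary $M$, so the cohomological dimension of $\Gamma_\mfa$ on $\Mod A$ is not controlled. In fact, boundedness of $R\Gamma_\mfa$ on modules is essentially a consequence of statement (i) (since $R\Gamma_\mfa(A)$ is bounded above by $\id A_A$), so invoking it as an input makes the dévissage circular in this case. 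To repair it you should argue as the paper does: take the finite injective resolution of $A$ as a complex of bimodules whose terms are injective as right $A$-modules (a right-module-only resolution, as you propose, also loses the left $A$-structure needed even to form $M^\cdot\otimes_A^L R\Gamma_\mfa(A)$), tensor it with a projective resolution of $M^\cdot$, and compare directly via Corollary \ref{cor-tens} --- or else first prove that $\Gamma_\mfa$ has finite cohomological dimension under the injective-dimension hypothesis before running the way-out argument. A small additional point: injective modules are automatically $\Gamma_\mfa$-acyclic; Lemma \ref{lem-torinj} is not what delivers that, and you should also note (e.g.\ via Lemma \ref{lem-sum} and free modules) that the isomorphism of Corollary \ref{cor-tens} on projectives is the one induced by your map $\eta$.
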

\begin{proof} The statement (ii) is similar to \cite[Theorem 5.1]{VdB}. We include the proof for the completeness. Assume that $\Gamma_\mfa$ has finite cohomological dimension (the proof is similar for the case that $A$ has finite injective dimension in both sides). Let $0\to I^0\overset{\delta^0}\to \cdots\overset{\delta^{n-1}}\to I^n\overset{\delta^n}\to \cdots$ be an injective resolution of the $A$-$A$-bimodule $A$. Since $\Gamma_\mfa$ has finite cohomological dimension, we see that $\ker \delta^n$ is $\Gamma_\mfa$ acyclic for some $n$. Let $E^i=I^i$ for $0\leq i<n$ and $E^n=\ker\delta^n$. Denote by $E^\cdot$ the complex $0\to E^0\overset{\delta^0}\to \cdots\overset{\delta^{n-2}}\to E^{n-1}\overset{\delta^n}\to E^n\to0$. Then $R\Gamma_\mfa(A)=\Gamma_\mfa(E^\cdot)$. Let $P^\cdot$ be a projective resolution of $M$. We have $$M^\cdot\otimes_A^LR\Gamma_\mfa(A)\cong P^\cdot\otimes_A \Gamma_\mfa(E^\cdot).$$  On the other hand, consider the complex $P^\cdot\otimes_A E^\cdot$. Since $E^\cdot$ is bounded and is quasi-isomorphic to $A$, $P^\cdot\otimes_A E^\cdot$ is quasi-isomorphic to $M^\cdot$. Moreover, each component of the complex $P^\cdot\otimes E^\cdot$ is $\Gamma_\mfa$-acyclic by Corollary \ref{cor-tens}. Hence $$R\Gamma_\mfa(M^\cdot)\cong \Gamma_\mfa(P^\cdot\otimes_A E^\cdot)\cong P^\cdot\otimes_A\Gamma_\mfa(E^\cdot).$$ Hence (i) follows. Finally, applying the functor $(\ )^*$ to the isomorphism in (i), we obtain Statement (ii).
\end{proof}

\section{Quotient categories} \label{sec-quot}

Let $A$ be a noetherian algebra, and let $\mfa$ be an ideal of $A$. We write $\Tor_\mfa A$ for the full subcategory of $\Mod A$ consisting of $\mfa$-torsion modules. Then $\Tor_\mfa A$ is a localizing subcategory of $\Mod A$. Define $$\QMod_\mfa A=\frac{\Mod A}{\Tor_\mfa A}.$$ The category $\QMod_\mfa A$ is an abelian category. For any module $M\in \Mod A$, we write $\mathcal{M}$ for the corresponding object in $\QMod_\mfa A$ through the natural projection functor $\pi:\Mod A\to \QMod_\mfa A$. Since $\Tor_\mfa A$ is a localizing subcategory of $\Mod A$, the projection functor $\pi$ has a right adjoint functor $\omega:\QMod_\mfa A\longrightarrow \Mod A$. For $M,N\in\Mod A$, the hom-set in $\QMod_\mfa A$ is defined by $$\Hom_{\QMod_\mfa A}(\mathcal{N},\mathcal{M})=\underrightarrow{\lim}\Hom_A(N',M/\Gamma_\mfa(M))$$ where the limit runs over all the submodules $N'$ of $N$ such that $N/N'$ is $\mfa$-torsion. Assume that $N$ is a finitely generated $A$-module. If $N/N'$ is $\mfa$-torsion, then there is an integer $n$ such that $N\mfa^n\subseteq N'$. Hence, in this case,
\begin{equation}\label{eq1}
  \Hom_{\QMod_\mfa A}(\mathcal{N},\mathcal{M})=\underset{n\to\infty}{\lim}\Hom_A(N\mfa^n,M/\Gamma_\mfa(M)).
\end{equation}
We refer the reader to the book \cite{P} for the basic properties of the quotient category $\QMod_\mfa A$.

In the rest of this section, $R$ is a noetherian algebra, and $G$ is a finite group acting on $R$ so that $R$ is a left $G$-module algebra. Let $B=R*G$ be the skew group algebra. Let $\mfr=\mfr(G,R)$ be the $G$-radical of $R$. Set $A=R^G$ and $\mfa=\mfr\cap A$. Then $A$ is a noetherian algebra and $\mfa$ is an ideal of $A$. Let $e=\frac{1}{|G|}\sum_{g\in G} g$. Then $A$ is isomorphic to $eBe$ through the isomorphism $A\to eBe, a\mapsto a\otimes e$. Set $\mfb=\mfr\otimes \kk G$. Since the radical $\mfr$ is stable under the $G$-action, $\mfb$ is an ideal of $B$.

The following facts are easy.
\begin{lemma} \label{lem-tortrans}
\begin{itemize}
  \item [(i)] $\mfb^n=\mfr^n\otimes \kk G$.
  \item [(ii)] If $M$ is a right $B$-module, then $\Gamma_\mfr(M)$ is a right $B$-submodule of $M_B$. Moreover,  $\Gamma_\mfb(M)=\Gamma_\mfr(M)$ as right $B$-modules.
\end{itemize}
\end{lemma}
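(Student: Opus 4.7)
The plan is to unwind both statements directly from the multiplication rule $(a\otimes g)(b\otimes h)=a(g\cdot b)\otimes gh$ in the skew group ring $B=R*G$, together with the fact (proved earlier) that $\mfr$ is stable under the $G$-action.

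For (i), I would argue by induction on $n$, the base case being the definition $\mfb=\mfr\otimes\kk G$. For the inductive step, the inclusion $\mfb^{n+1}\subseteq \mfr^{n+1}\otimes\kk G$ follows from the computation
\[
(r\otimes g)(s\otimes h)=r(g\cdot s)\otimes gh,
\]
where $r\in\mfr^n$, $s\in\mfr$; since $\mfr$ is $G$-stable, $g\cdot s\in\mfr$, so $r(g\cdot s)\in\mfr^{n+1}$. For the reverse inclusion, any homogeneous element $rs\otimes g$ with $r\in\mfr^n$ and $s\in\mfr$ factors as $(r\otimes 1)(s\otimes g)\in\mfb^n\cdot\mfb=\mfb^{n+1}$.

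For (ii), to show that $\Gamma_\mfr(M)$ is a right $B$-submodule, it suffices (by linearity) to check closure under right multiplication by the generators $a\otimes 1$ ($a\in R$) and $1\otimes g$ ($g\in G$). If $m\mfr^n=0$, then $(ma)\mfr^n\subseteq m(a\mfr^n)\subseteq m\mfr^n=0$, so $ma\in\Gamma_\mfr(M)$. For the group elements, using $(1\otimes g)(s\otimes 1)=(g\cdot s)\otimes g=((g\cdot s)\otimes 1)(1\otimes g)$, one computes
\[
(m(1\otimes g))\cdot s \;=\; m((g\cdot s)\otimes g) \;=\; (m(g\cdot s))(1\otimes g),
\]
and since $g\cdot s\in\mfr^n$ whenever $s\in\mfr^n$ (by $G$-stability of $\mfr$), the right-hand side vanishes. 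Hence $m(1\otimes g)\in\Gamma_\mfr(M)$.

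For the equality $\Gamma_\mfb(M)=\Gamma_\mfr(M)$, one direction uses that $\mfr^n\otimes 1\subseteq\mfb^n$, so $m\mfb^n=0\Rightarrow m\mfr^n=0$. For the converse, by part (i) it suffices to annihilate elements of the form $r\otimes g$ with $r\in\mfr^n$; if $m\mfr^n=0$ then $m(r\otimes g)=(mr)(1\otimes g)=0$, so $m\mfb^n=0$. The two submodules of $M$ coincide as sets, and since both inherit their $B$-action from $M$, they agree as right $B$-modules. There is no real obstacle here—the only thing to stay alert to is the asymmetric role of $g$ in the multiplication of $B$, which is handled cleanly by the $G$-stability of $\mfr$.
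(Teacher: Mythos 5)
Your proof is correct and is exactly the routine verification the paper has in mind when it states this lemma without proof (``The following facts are easy''): induction on $n$ using the multiplication rule of $R*G$ and the $G$-stability of $\mfr$, then checking closure under the generators $a\otimes 1$ and $1\otimes g$ and comparing annihilation by $\mfr^n$ and $\mfb^n$ via part (i). The only point you use implicitly is that $\mfr^n$ is itself $G$-stable, which is immediate since $G$ acts by algebra automorphisms, so there is no gap.
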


Since $A$ is a subalgebra of $R$, $\mfa R$ is a right ideal of $R$. In general, $\mfa R$ is not a two-sided ideal of $R$.

\begin{definition} We say that $\mfa$ and $\mfr$ are {\it cofinal}, if (i) $\mfa R=R\mfa$, and (ii) the filtrations $R\supseteq \mfa R\supseteq \mfa^2 R\supseteq\cdots\supseteq \mfa^n R\supseteq\cdots$ and $R\supseteq \mfr \supseteq \mfr^2 \supseteq\cdots\supseteq \mfr^n\supseteq\cdots$ defined by the ideals $\mfa R$ and $\mfr$ of $R$, are cofinal, or equivalently, for each $s$, there is an integer $n$ such that $\mfr^n\subseteq \mfa^sR$.
\end{definition}

\begin{lemma} \label{lem-cofin} Let $R$ and $G$ be as above. Assume that $\mfa$ and $\mfr$ are cofinal.
\begin{itemize}
  \item [(i)] If $N$ is an $\mfa$-torsion right $A$-module, then $\Hom_A(R,N)$ is a $\mfb$-torsion right $B$-module.
  \item [(ii)] If in addition, $\mfa$ has the right the right AR-property, then $\Ext_A^i(R,N)$ is a $\mfb$-torsion right $B$-module for all $i\ge0$.
  \end{itemize}
\end{lemma}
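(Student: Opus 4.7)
The plan is to first identify the right $B$-module structure on $\Hom_A(R,N)$ and translate the $\mfb$-torsion condition on a single $f$ into an ideal-theoretic condition. The algebra $R$ carries a left $B$-module structure given by $(r\otimes g)\cdot r'=r(g\cdot r')$, and is naturally an $R$-$A$-bimodule (hence a $B$-$A$-bimodule) since $A\subseteq R$; thus $\Hom_A(R,N)$ is a right $B$-module via $(f\cdot b)(r)=f(b\cdot r)$. By Lemma \ref{lem-tortrans}(i) we have $\mfb^n=\mfr^n\otimes\kk G$, and since each $g\in G$ acts bijectively on $R$ and $\mfr$ is an ideal of $R$, an immediate check gives $\mfb^n\cdot R=\mfr^n$. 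Consequently, $f\cdot\mfb^n=0$ if and only if $f(\mfr^n)=0$, so (i) reduces to producing such an $n$ for every $f\in\Hom_A(R,N)$.

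For (i), cofinality together with the relation $\mfa R=R\mfa$ (which by induction yields $\mfa^sR=R\mfa^s$ for every $s$) produces, for any chosen $s$, an $n$ with $\mfr^n\subseteq R\mfa^s$. Since $f$ is $A$-linear, $f(R\mfa^s)=f(R)\mfa^s$. I would next use that $R$ is finitely generated as a right $A$-module (a standing hypothesis of the setup, standard for a finite group action on a noetherian algebra) to write $R=\sum_{i=1}^k r_iA$; then $f(R)=\sum_i f(r_i)A$ is a finitely generated $A$-submodule of the $\mfa$-torsion module $N$, and so admits a uniform annihilator $\mfa^s$ obtained by taking the maximum of the annihilating powers of the finitely many elements $f(r_i)$. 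This $s$ determines $n$ via cofinality, and gives $f(\mfr^n)\subseteq f(R)\mfa^s=0$.

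For (ii), take a minimal injective resolution $0\to N\to I^0\to I^1\to\cdots$ in $\Mod A$. Writing $K^0:=N$ and $K^{i+1}:=I^i/\im(I^{i-1}\to I^i)$ for the successive cokernels, an induction on $i$ shows every $K^i$ is $\mfa$-torsion (as a quotient of the $\mfa$-torsion module $I^{i-1}$), and then $I^i$, being the injective envelope of $K^i$, is $\mfa$-torsion by Lemma \ref{lem-torinj}(ii). By part (i), each $\Hom_A(R,I^i)$ is $\mfb$-torsion in $\Mod B$, and the induced differentials are $B$-linear. Hence $\Ext^i_A(R,N)=H^i(\Hom_A(R,I^\cdot))$ is, as a right $B$-module, a subquotient of the $\mfb$-torsion module $\Hom_A(R,I^i)$, and therefore itself $\mfb$-torsion.

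The main obstacle is the bookkeeping in part (i): one must correctly pin down the right $B$-module structure on $\Hom_A(R,N)$, verify that $\mfb^n\cdot R=\mfr^n$, and then combine cofinality with the finite generation of $R$ over $A$ to pass from the pointwise $\mfa$-torsion of $N$ to a single uniform annihilator on the $A$-submodule $f(R)$. Once this is in place, part (ii) is a routine consequence of Lemma \ref{lem-torinj}(ii) together with the stability of the $\mfb$-torsion class under subquotients.
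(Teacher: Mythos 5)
Your proof is correct and follows essentially the same route as the paper: part (i) is exactly the paper's argument (finite generation of $R_A$ gives a uniform $s$ with $f(R)\mfa^s=0$, cofinality gives $\mfr^n\subseteq R\mfa^s$, and the reduction from $\mfb$-torsion to $\mfr$-torsion is Lemma \ref{lem-tortrans}), and part (ii) is the paper's use of Lemma \ref{lem-torinj}(ii) on a minimal injective resolution followed by part (i). You merely spell out details the paper leaves implicit, namely the right $B$-module structure on $\Hom_A(R,N)$ and the induction showing each $I^i$ in the minimal resolution is $\mfa$-torsion.
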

\begin{proof} (i) For every $f\in\Hom_A(R,N)$, by Lemma \ref{lem-tortrans}, we only need to show that $f\mfr^n=0$ for some $n$. Note that $R_A$ is finitely generated. We assume that $R=r_1A+\dots+r_tA$ for some $r_1,\dots,r_t\in R$. Since $N$ is $\mfa$-torsion, there is an integer $s$ such that $f(r_i)\mfa^s=0$ for all $i=1,\dots,t$. Since $\mfa$ and $\mfr$ are cofinal, there is an integer $n$ such that $\mfr^n\subseteq R\mfa^s$. Now for every $x\in R$, we have $(f\mfr^n)(x)= f(\mfr^n x)\subseteq f(\mfr^n)\subseteq f(R\mfa^s)=f(r_1\mfa^s+\cdots+r_t\mfa^s)=f(r_1)\mfa^s+\cdots+f(r_t)\mfa^s=0$. It follows that $f\mfr^n=0$. Hence $\Hom_A(R,N)$ is a $\mfb$-torsion module.

(ii) Take a minimal injective resolution $0\to N\to I^0\to I^1\to\cdots$. Since $\mfa$ has the right AR-property, $I^i$ is $\mfa$-torsion for all $i\ge0$ by Lemma \ref{lem-torinj}. By (i), $\Hom_A(R,I^i)$ is $\mfb$-torsion. Hence $\Ext^i_A(R,N)$ is $\mfb$-torsion for all $i\ge0$.
\end{proof}

Consider the exact functor $-\otimes_B R:\Mod B\longrightarrow \Mod A$. Since $R$ is isomorphic to $Be$ through the isomorphism $R\to Be,r\mapsto r\otimes e$. By Lemma \ref{lem-tortrans}(i), $M\otimes_B R$ is an $\mfa$-torsion module if $M_B$ is a $\mfb$-torsion module since $\mfa=\mfr\cap A$. Hence $-\otimes_BR$ induces an exact functor: $$-\otimes_\mathcal{B}\mathcal{R}:\QMod_\mfb B\longrightarrow \QMod_\mfa A.$$

\begin{theorem}\label{thm-equiv} Let $R$ and $G$ be as above.  Assume that $\mfa$ and $\mfr$ are cofinal, and $\mfa$ has the right AR-property.
\begin{itemize}
  \item [(i)] The functor $\Hom_A(R,-):\Mod A\longrightarrow \Mod B$ induces a functor $$\Hom_\mathcal{A}(\mathcal{R},-):\QMod_\mfa A\longrightarrow \QMod_\mfb B.$$
  \item [(ii)] The functors $-\otimes_\mathcal{B}\mathcal{R}$ and $\Hom_\mathcal{A}(\mathcal{R},-)$ are quasi-inverse to each other.
\end{itemize}
\end{theorem}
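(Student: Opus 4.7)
The strategy is to lift the module-level adjunction $(-\otimes_B R,\,\Hom_A(R,-))$ to an adjunction between the quotient categories and then verify that the induced pair is mutually quasi-inverse. For part (i), I apply the universal property of Gabriel localization: a functor $F:\Mod A\to\mathcal{D}$ factors uniquely through $\pi_A$ iff $F$ annihilates $\Tor_\mfa A$ on objects and inverts every morphism whose kernel and cokernel are $\mfa$-torsion. Take $F=\pi_B\circ\Hom_A(R,-)$. The first condition is immediate from Lemma~\ref{lem-cofin}(i). For the second, given $f:N\to N'$ with $\mfa$-torsion kernel and cokernel, split the four-term sequence $0\to\ker f\to N\to N'\to\coker f\to 0$ into two short exact sequences through $\im f$ and apply $\Hom_A(R,-)$. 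The resulting long exact sequences identify $\ker\Hom_A(R,f)$ with $\Hom_A(R,\ker f)$ and exhibit $\coker\Hom_A(R,f)$ as a two-step extension by subquotients of $\Hom_A(R,\coker f)$ and $\Ext^1_A(R,\ker f)$; all three are $\mfb$-torsion by Lemma~\ref{lem-cofin}, with part (ii) of that lemma (which requires the AR-property of $\mfa$) handling the $\Ext^1$ term. Hence $\pi_B\Hom_A(R,f)$ is an isomorphism, producing the induced functor $\Hom_\mathcal{A}(\mathcal{R},-)$.

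For part (ii), I first check that $-\otimes_B R$ sends $\mfb$-torsion to $\mfa$-torsion. For $M\in\Mod B$ with $M\mfr^n=0$, the identity $(m\otimes r)a = mra\otimes 1$ in $M\otimes_B R$ (valid because both $r\in R$ and $a\in A$ lie in $B$, so their actions slide across the tensor) together with cofinality $r\mfa^s\subseteq R\mfa^s=\mfa^s R\subseteq\mfr^s$ shows $(m\otimes r)\mfa^s=0$ for $s\geq n$. Combined with part (i), the module-level adjunction descends to an adjunction $(-\otimes_\mathcal{B}\mathcal{R},\,\Hom_\mathcal{A}(\mathcal{R},-))$ between the quotient categories, and to prove it is an equivalence I verify that the unit $\eta_N:N\to\Hom_A(R,N\otimes_B R)$ and counit $\epsilon_M:\Hom_A(R,M)\otimes_B R\to M$ become invertible in the respective quotient categories. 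Using $R\cong Be$ with $e=|G|^{-1}\sum_g g$ as a $B$--$A$-bimodule, the counit at $M=A$ identifies with a trace-type map $\Hom_{eBe}(Be,eBe)\otimes_B Be\to eBe\cong A$, while the unit at $N=B$ identifies with the canonical Auslander map $B\to\End_A(R)$. The general cases of the counit and unit then reduce to these by a free-resolution argument, the $\Ext^1$ obstructions being absorbed into torsion by Lemma~\ref{lem-cofin}(ii) exactly as in part (i).

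The principal obstacle is showing that the counit at $M=A$ and the unit at $N=B$ have $\mfa$-torsion (resp.\ $\mfb$-torsion) kernel and cokernel, since neither is an honest isomorphism without additional hypotheses on the pertinency (cf.\ \cite[Theorem 0.3]{BHZ}). The key input is the identification $\mfr=R\cap BeB$ from Proposition~\ref{prop-mrad}: after inverting $\mfr$ the idempotent $e$ becomes full, forcing a full Morita equivalence between the localized versions of $B$ and $eBe\cong A$, so both the Auslander map and the trace map become isomorphisms at the localized level. Cofinality then transfers this vanishing of the defect into annihilation by powers of $\mfa$ (or of $\mfr$) on the appropriate side, which is exactly the torsion condition required in the quotient categories.
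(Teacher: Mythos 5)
Your part (i) is essentially the paper's argument (factor through $\im f$, control $\coker f_*$ by $\Hom_A(R,\coker f)$ and $\Ext^1_A(R,\ker f)$ via Lemma \ref{lem-cofin}), and your preliminary check that $-\otimes_B R$ sends $\mfb$-torsion to $\mfa$-torsion is fine. The gap is in part (ii), in both of the steps you lean on. First, the ``free-resolution argument'' reducing the unit and counit to $N=B$ and $M=A$ does not go through: from $0\to K\to B^{(I)}\to M\to 0$ the failure of right exactness of $\Hom_A(R,-\otimes_B R)$ is measured by $\Ext^1_A(R,K\otimes_B R)$, and the syzygy $K\otimes_B R$ is not $\mfa$-torsion in general, so Lemma \ref{lem-cofin}(ii) (which needs torsion coefficients) gives you nothing; the same problem occurs on the $A$-side. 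Second, the ``principal obstacle'' paragraph asserts rather than proves the crucial point: ``after inverting $\mfr$ the idempotent $e$ becomes full, forcing a Morita equivalence between the localized versions of $B$ and $eBe$'' is, in substance, the statement of the theorem itself, and there is no ring-theoretic localization at $\mfr$ available here (no Ore set), only the Gabriel quotient whose equivalence with $\QMod_\mfa A$ is what must be shown. The torsion control you need is also not supplied by cofinality on this side.

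What the correct argument actually requires (and what the paper does) is: the counit needs no torsion argument at all, since $\Hom_A(R,N)\otimes_B R\cong\Hom_{eBe}(Be,N)\otimes_B Be\cong N$ is an honest natural isomorphism for every $N\in\Mod A$ (your claim that it fails to be one without pertinency hypotheses is incorrect; the pertinency/Auslander issue concerns only the unit $M\to\Hom_A(R,M\otimes_B R)$). For the unit, one identifies $\Hom_A(R,M\otimes_B R)\cong\Hom_B(Be\otimes_{eBe}eB,M)$ and compares it with $M=\Hom_B(B,M)$ along $Be\otimes_{eBe}eB\overset{m}\to BeB\hookrightarrow B$. The kernel $L$ of $m$ and the cokernel $B/BeB$ are killed by $e$, hence $\mfb$-torsion as left $B$-modules because $\mfb\subseteq BeB$ (this is where Proposition \ref{prop-mrad} really enters), and they are finitely generated as right $B$-modules; one then needs the separate fact (Lemma \ref{lem-Btor}) that for such a bimodule $X$ the groups $\Hom_B(X,M)$ and $\Ext^1_B(X,M)$ are $\mfb$-torsion right $B$-modules. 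Without this bimodule Hom/Ext torsion lemma, the claim that the defect of the unit is ``absorbed into torsion'' has no justification, so as written your proof of (ii) is incomplete.
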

\begin{proof} (i) By Lemma \ref{lem-cofin}, we see that $\Hom_A(R,-)$ sends $\mfa$-torsion modules to $\mfb$-torsion modules. Let $f:M_A\to N_A$ be a morphism in $\Mod A$ such that $\ker f$ and $\coker f$ are $\mfa$-torsion modules. We show next that the kernel and cokernel of the morphism $f_*=\Hom_A(R,f):\Hom_A(R,M)\to \Hom_A(R,N)$ are $\mfb$-torsion. By Lemma \ref{lem-cofin}, $\ker f_*=\Hom_A(R,\ker f)$ is $\mfb$-torsion.
Consider the exact sequences:
\begin{equation}\label{ex1}
  0\to\ker f\to M\overset{g}\to K\to0,
\end{equation}
\begin{equation}\label{ex2}
  0\to K\overset{\iota}\to N\to \coker f\to 0,
\end{equation}
where $K=\im f$, $\iota$ is the inclusion map and $g$ is the natural morphism induced by $f$. By the exact sequence (\ref{ex1}), we have an exact sequence $0\to \Hom_A(R,\ker f)\to \Hom_A(R,M)\overset{g_*}\to \Hom_A(R,K)\to \Ext^1_A(R,\ker f)$. Since $\mfa$ has the right AR-property and $\ker f$ is $\mfa$-torsion, $\Ext^1_A(R,\ker f)$ is $\mfb$-torsion by Lemma \ref{lem-cofin}. Hence the quotient module $\Hom_A(R,K)/\im g_*$ is a $\mfb$-torsion $B$-module. By the exact sequence (\ref{ex2}), we see that the quotient module $\Hom_A(R,N)/\im \iota_*$ is a $\mfb$-torsion $B$-module since it is a submodule of the $\mfb$-torsion module $\Hom_A(R,\coker f)$. Note that $f_*=\iota_*g_*$. Thus $\im f_*=\iota_*(\im g_*)$. Hence we have a short exact sequence $$0\longrightarrow \im \iota_*/\im f_*\longrightarrow\Hom_A(R,N)/\im f_*\longrightarrow\Hom_A(R,N)/\im \iota_*\longrightarrow 0.$$ Since $\iota_*$ is injective, we have $\im \iota_*/\im f_*\cong \Hom_A(R,K)/\im g_*$.  As both the first term and the third term are $\mfb$-torsion, the middle term is $\mfb$-torsion as well, which is isomorphic to $\coker f_*$.
Now by the definition of the quotient categories, we see that the functor $\Hom_A(R,-)$ induces a functor between the quotient categories as desired.

(ii) Note that $R$ is isomorphic to $Be$ through the map $r\mapsto r\otimes e$, and $A$ is isomorphic to $eBe$ via the map $a\mapsto a\otimes e$.  in subsequent, we identify the $B$-$A$-bimodule $R$ with $Be$ via the above maps.

For any $N\in \Mod A$, we have
\begin{eqnarray*}
  \Hom_A(R,N)\otimes_BR&\cong& \Hom_{eBe}(Be,N)\otimes_B Be\\
  &\cong& \Hom_{eBe}(Be,N)\otimes_B\Hom_B(eB,B)\\
  &\cong&\Hom_B(eB,\Hom_{eBe}(Be,N))\\
  &\cong&\Hom_{eBe}(eB\otimes_B Be,N)\\
  &\cong& N.
\end{eqnarray*}
Hence we obtain that $\Hom_\mathcal{A}(\mathcal{R},\mathcal{N})\otimes_\mathcal{B}\mathcal{R}$ is naturally isomorphic to $\mathcal{N}$.

For a $B$-module $M$, we have $\Hom_{\mathcal{A}}(\mathcal{R},\mathcal{M}\otimes_\mathcal{B}\mathcal{R})=\pi(\Hom_A(R,M\otimes_BR))$. Similarly we have the following isomorphisms:
\begin{eqnarray*}
 \Hom_A(R,M\otimes_BR)&\cong&\Hom_{eBe}(Be,M\otimes_B Be)\\
 &\cong&\Hom_{eBe}(Be,\Hom_B(eB,M))\\
 &\cong& \Hom_B(Be\otimes_{eBe}eB,M).
\end{eqnarray*}
Consider the exact sequence
\begin{equation}\label{ex3}
0\to L\to Be\otimes_{eBe}eB\overset{m}\to BeB\to0,
\end{equation}
\begin{equation}\label{ex4}
 0\to BeB\overset{\tau}\to B\to B/BeB\to0,
\end{equation}
where $m$ is the multiplication of the algebra $B$, $\tau$ is the inclusion map and $L$ is the kernel of $m$.

Since $\mfb\subseteq BeB$, $B/BeB$ is $\mfb$-torsion on both sides. Applying the functors $-\otimes_B Be$ and $eB\otimes_B-$ to the exact sequence (\ref{ex3}), we obtain that $Le=0$ and $eL=0$. Hence $L$ is $\mfb$-torsion both as a right $B$-module and as a left $B$-module. Since $B$ is noetherian and $Be$ is finitely generated as an $eBe$-module, both $B/BeB$ and $L$ are finitely generated as right $B$-modules. It follows from the exact sequences (\ref{ex3}),  (\ref{ex4}) and Lemma \ref{lem-Btor} below, that the kernels and cokernels of both  maps $\Hom_B(\tau,M):M\longrightarrow \Hom_B(BeB,M)$ and  $\Hom_B(m,M):\Hom_B(BeB,M)\longrightarrow\Hom_B(Be\otimes_{eBe} eB, M)$ are $\mfb$-torsion. Therefore, we obtain a natural isomorphism in $\QMod_\mfb B$: $$\pi(\Hom_B(Be\otimes_{eBe}eB,M))\cong \mathcal{M}.$$ Hence we have a natural isomorphism $\Hom_{\mathcal{A}}(\mathcal{R},\mathcal{M}\otimes_\mathcal{B}\mathcal{R})\cong \mathcal{M}$.
\end{proof}

\begin{lemma} \label{lem-Btor} Assume that $X$  is a $B$-$B$-bimodule which is finitely generated as a right $B$-module. If $X$ is $\mfb$-torsion as a left $B$-module, then $\Ext^j_B(X,M)$ is a $\mfb$-torsion right $B$-module for all $i\ge0$ and any right $B$-module $M$.
\end{lemma}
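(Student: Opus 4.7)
My plan is to identify the right $B$-action on $\Ext^j_B(X,M)$ as being induced by left multiplication on $X$ and then exploit the torsion hypothesis to see that this action is killed by a power of $\mfb$. The right $B$-module structure on $\Hom_B(X,M)$ is given by $(f\cdot b)(x)=f(bx)$, that is, precomposition with the left-multiplication map $\lambda_b:X\to X$. Because $X$ is a $B$-bimodule, each $\lambda_b$ is a morphism of right $B$-modules, and this structure extends to $\Ext^j_B(X,M)$ in the standard way: choose a projective resolution $P^\bullet\to X$ in $\Mod B$; then $\lambda_b$ lifts (uniquely up to chain homotopy) to a chain endomorphism $\widetilde{\lambda}_b$ of $P^\bullet$, and the induced endomorphism $H^j(\Hom_B(\widetilde{\lambda}_b,M))$ on $\Ext^j_B(X,M)$ coincides with right multiplication by $b$.

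The only remaining step is to exhibit an $n$ with $\mfb^n X=0$. Since $X$ is finitely generated as a right $B$-module, I would write $X=\sum_{i=1}^t x_iB$ for generators $x_1,\dots,x_t$. The hypothesis that $X$ is $\mfb$-torsion on the left gives integers $n_i$ with $\mfb^{n_i}x_i=0$; setting $n=\max_i n_i$ yields $\mfb^n x_i=0$ for all $i$. Since $\mfb$ is a two-sided ideal of $B$, this gives $\mfb^n X=\sum_i \mfb^n x_i B=0$. Therefore for every $b\in\mfb^n$ the map $\lambda_b$ is the zero endomorphism of $X$; any chain lift $\widetilde{\lambda}_b$ is then null-homotopic, so it induces the zero map on $H^j(\Hom_B(P^\bullet,M))=\Ext^j_B(X,M)$. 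Consequently $\Ext^j_B(X,M)\cdot\mfb^n=0$, which shows that $\Ext^j_B(X,M)$ is $\mfb$-torsion as a right $B$-module.

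I do not anticipate a serious obstacle: the argument is purely formal and needs neither the AR-property of $\mfa$ nor the cofinality of $\mfa$ and $\mfr$ invoked elsewhere in Section \ref{sec-quot}. The only point requiring minor care is the bimodule bookkeeping in the first paragraph, namely verifying that the naive right action on $\Hom_B(X,M)$ really does pass to $\Ext^j$ and agrees with $\widetilde{\lambda}_b^\ast$; this is standard and follows from the uniqueness of chain lifts up to homotopy, together with the fact that $b\mapsto \lambda_b$ is a ring homomorphism $B\to\End_B(X)^{\op}$ into right $B$-module endomorphisms.
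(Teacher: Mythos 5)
Your argument is correct, and its engine is the same as the paper's: finite generation of $X$ on the right plus left $\mfb$-torsion gives a single $n$ with $\mfb^nX=0$, and this annihilation is what kills $\Ext^j_B(X,M)$. Where you differ is in how the annihilation is transported to $\Ext$. You resolve $X$ by projectives and use functoriality in the first variable: for $b\in\mfb^n$ the map $\lambda_b$ is the zero endomorphism of $X$, any chain lift is null-homotopic, so right multiplication by $b$ is zero on $\Ext^j_B(X,M)$. The paper instead resolves $M$ by injectives: since $(f\,b)(x)=f(bx)$ and $\mfb^n$ annihilates $X$ on the left, each $\Hom_B(X,I^j)$ is killed by $\mfb^n$, and $\Ext^j_B(X,M)$ is a subquotient of a $\mfb$-torsion module, hence $\mfb$-torsion. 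The paper's route is a bit shorter because it needs no lifting/homotopy bookkeeping and no identification of the induced action on cohomology; yours makes the module structure on $\Ext$ completely explicit and gives the uniform annihilation $\Ext^j_B(X,M)\,\mfb^n=0$ directly (though the paper's argument yields the same uniformity, with the same $n$ for all $j$). One small slip in your parenthetical: since $\lambda_{bb'}=\lambda_b\circ\lambda_{b'}$, the assignment $b\mapsto\lambda_b$ is a ring homomorphism $B\to\End_B(X)$, not to $\End_B(X)^{\op}$; the right action on $\Ext^j_B(X,M)$ then comes from the contravariance of $\Ext_B(-,M)$, which reverses composition. This is only a bookkeeping convention and does not affect the proof. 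You are also right that neither the AR-property of $\mfa$ nor cofinality is needed here.
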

\begin{proof} Since $X$ is finitely generated as a right $B$-module, we have $X=x_1B+\cdots+x_tB$ for some $x_1,\dots,x_t\in X$.  As $X$ is $\mfb$-torsion as a left $B$-module, there is an integer $n$ such that $\mfb^mx_i=0$ for all $i=1,\dots,t$. For any element $f\in \Hom_B(X,M)$, we have $(f\mfb^n)(x_i)=f(\mfb^n x_i)=0$ for $i=1,\dots,t$. Hence $\Hom_B(X,M)$ is $\mfb$-torsion. Now take an injective resolution $0\to M\to I^0\to I^2\to\cdots$. We see that $\Hom_B(X,I^j)$ is $\mfb$-torsion for all $j\ge0$. Hence $\Ext^j_B(X,M)$ is $\mfb$-torsion for all $j\ge0$.
\end{proof}

It is known that any ideal of a noetherian commutative algebra has the AR-property (cf. \cite[Theorem 8.5]{M}). The condition that $\mfa$ and $\mfr$ are cofinal in the above theorem is necessary when the algebra $R$ is commutative.

\begin{theorem} \label{thm-commeq} Assume that $R$ is a noetherian commutative algebra and $G$ is a finite group acting on $R$ by automorphisms. Then the following are equivalent.
\begin{itemize}
  \item [(i)] The functor $-\otimes_\mathcal{B}\mathcal{R}:\QMod_\mfb B\longrightarrow \QMod_\mfa A$ is an equivalence of abelian categories.
  \item [(ii)] The ideal $\mfa$ of $A$ and the ideal $\mfr$ of $R$ are cofinal.
\end{itemize}
\end{theorem}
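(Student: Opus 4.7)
The direction (ii) $\Rightarrow$ (i) is immediate: for the commutative noetherian ring $A$, the Artin--Rees lemma gives every ideal the AR-property \cite[Theorem 8.5]{M}, and $\mfa R = R\mfa$ is automatic by commutativity of $R$, so Theorem \ref{thm-equiv} applies directly.

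The content lies in (i) $\Rightarrow$ (ii). The plan is to exploit the fact that an equivalence of abelian categories reflects vanishing: under $-\otimes_\mathcal{B}\mathcal{R}$, a $B$-module $M$ is $\mfb$-torsion if and only if $M\otimes_B R$ is $\mfa$-torsion. I will construct, for each $s \ge 1$, a cyclic $B$-module whose tensor product with $R$ is $R/\mfa^s R$, and then read off the cofinality estimate from the resulting $\mfb$-torsion.

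The construction rests on the observation that $\mfa$ is pointwise $G$-fixed (since $\mfa \subseteq A = R^G$), so $\mfa R$ is a $G$-stable two-sided ideal of the commutative ring $R$. Hence $J := \mfa R \otimes \kk G$ is a two-sided ideal of $B = R*G$, and commutativity of $R$ yields $J^s = (\mfa R)^s \otimes \kk G = \mfa^s R \otimes \kk G$. For the cyclic $B$-module $B/J^s$, a direct calculation gives
\[
  (B/J^s) \otimes_B R \;\cong\; R/(J^s \cdot R) \;=\; R/\mfa^s R,
\]
which is killed on the right by $\mfa^s$ and is therefore $\mfa$-torsion. By hypothesis (i), $B/J^s$ must then be $\mfb$-torsion. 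Since $B$ is noetherian (because $R$ is noetherian and $G$ is finite), a cyclic $\mfb$-torsion $B$-module is annihilated by a power of $\mfb$, so $\mfb^n \subseteq J^s$ for some $n$. Reading off the $R$-components of $\mfb^n = \mfr^n \otimes \kk G$ and $J^s = \mfa^s R \otimes \kk G$ then yields $\mfr^n \subseteq \mfa^s R$, the desired cofinality.

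The routine bookkeeping — that $\mfa R$ is $G$-stable, that $J$ is genuinely two-sided in $B$, and that $J^s = \mfa^s R \otimes \kk G$ — follows from $G$-invariance of $\mfa$ and commutativity of $R$, and the only conceptual input is reflection of torsion under the equivalence. The step one has to be alert to is the correct choice of the test module $B/J^s$: the more obvious candidate $B/\mfb^s$ delivers the reverse inclusion $R\mfa^t \subseteq \mfr^s$, which is already automatic from $\mfa \subseteq \mfr$ and does not give cofinality.
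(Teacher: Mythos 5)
Your proposal is correct and follows essentially the same route as the paper: the paper also deduces (i)$\Rightarrow$(ii) by testing against the module $R/\mfa^s R$ (viewed directly as a right $B$-module via the $G$-stable ideal $\mfa^s R$), noting its image under $-\otimes_\mathcal{B}\mathcal{R}$ is $\mfa$-torsion, reflecting torsion through the equivalence, and using finite generation over noetherian $B$ to get $\mfr^n\subseteq\mfa^s R$. Your cyclic module $B/J^s\cong (R/\mfa^s R)\otimes\kk G$ is just the induced version of the same test module, so the arguments coincide in substance.
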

\begin{proof} That (ii) implies (i) has been proved in Theorem \ref{thm-equiv}. Now assume that (i) holds. Write $\mathcal{F}$ for $-\otimes_\mathcal{B}\mathcal{R}$. For $s\ge0$, let $M=R/\mfa^sR$. Note that $\mfa^sR$ is a right $B$-submodule of $R$. Hence $M$ is a right $B$-module. Then $\mathcal{F}(\mathcal{M})=\pi(M\otimes_B R)=0$ since $(M\otimes_BR)\mfa^s=M\otimes_BR\mfa^s=M\mfa^s\otimes_BR=0$. Therefore $\mathcal{M}=0$, equivalently, $M$ is an $\mfr$-torsion module. Since $M$ is finitely generated, there is an integer $n$ such that $M\mfr^n=0$, that is, $\mfr^n\subseteq \mfa^sR$.
\end{proof}

\begin{example} Let $R=\kk[x,y]$ and let $G=\{1,g\}$ where $g$ is the automorphism of $R$ defined by $g(x)=-x$ and $g(y)=-y$. It is easy to see that $(1,x)$ and $(x,1)$ are pertinent, hence $x\in \mfr(R,G)$. Similarly, $y\in \mfr(R,G)$. Therefore $\mfr:=\mfr(R,G)=Rx+Ry$. The invariant subalgebra $A:=R^G$ is the subalgebra of $R$ consisting of elements of even degrees (here, we view $x$ and $y$ of degree 1). Then $\mfa=\mfr\cap A=Ax^2+Ay^2+Axy$. Now it is easy to see that $\mfa$ and $\mfr$ are cofinal.
\end{example}

\begin{example}\label{ex5} Let $R=\kk_{-1}[x,y,z]$, and let $G=\{1,g\}$ where $g$ is the automorphism of $R$ defined by $g(x)=x$, $g(y)=-y$ and $g(z)=-z$. Then the invariant subalgebra $A=\kk[x,y^2,z^2,yz]$. As in the above example, $y,z$ are in the $G$-radical $\mfr=\mfr(R,G)$. Let $\mathfrak{i}=Ry+Rz$. Then $\mathfrak{i}\subseteq\mfr$. Note that $\mathfrak{i}$ is closed under $G$-action, hence $R/\mathfrak{i}$ is a $G$-module algebra. One sees that the induced $G$-action on $\kk[x]\cong R/\mathfrak{i}$ is trivial. Hence by Lemma \ref{lem-min}, $\mathfrak{i}=\mfr$. Then $\mfa=\mfr\cap A=Ay^2+Az^2+Ayz$. Since $A$ is commutative, $\mfa$ has the (right) AR-property. Moreover, one sees that $\mfa$ and $\mfr$ are cofinal.
\end{example}

\begin{example} Let $R=\kk_{-1}[x,y,z]$, and let $G=\{1,g,g^2\}$ where $g$ is the automorphism of $R$ defined by $g(y)=\omega y$ and $g(z)=\omega^2 z$ in which $\omega$ is a third primitive root of unity. Then it is easy to see that the invariant subaglebra $A$ is generated by $x,y^3,z^3,yz$. We next compute the $G$-radical of $R$. An easy computation shows $(y^2,y,1)\overset{G}\sim(1,y,y^2)$, $(z^2,z,1)\overset{G}\sim(1,z,z^2)$ and $(yz,-z,y)\overset{G}\sim(1,y,z)$. Hence $y^2,z^2,yz\in \mfr$. Let $\mathfrak{i}$ be the ideal of $R$ generated by $y^2,z^2,yz$. Then $\mathfrak{i}\subseteq \mfr$. Consider the quotient algebra $R/\mathfrak{i}$. Note that $R/\mathfrak{i}$ is also a $G$-module algebra and the invariant subalgebra is isomorphic to $\kk[x]$. As a left $\kk[x]$-module, $R/\mathfrak{i}$ is isomorphic to $\kk[x]\oplus \kk[x]y\oplus \kk[x]z$. By Proposition \ref{prop-ss}, $R/\mathfrak{i}$ is $G$-semisimple. By Lemma \ref{lem-min}, $\mfr=\mathfrak{i}$. Then $\mfa=\mfr\cap A=A y^3+Az^3+Ayz$. Since $y^3,z^3,yz$ are normal elements in $A$, $\mfa$ has the right AR-property (cf. \cite[Proposition 4.2.6]{MR}). Also, it is easy to see that $\mfa$ and $\mfr$ are cofinal.
\end{example}

\section{Relative Cohen-Macaulay algebras}\label{sec6}

Let $R$ be a noetherian algebra, and let $G$ be a finite group acting on $R$ by automorphims. Let $A=R^G$ be the invariant subalgebra. As before, $\mfr=\mfr(R,G)$ is the $G$-radical of $R$ and $\mfa=\mfr\cap A$. Set $B=R*G$ and $\mfb=\mfr\otimes\kk G$.

\begin{setup}\label{set} In this section, we assume that  the following conditions hold:
\begin{itemize}
  \item [(i)] $\mfa$ has the right AR-property;
  \item [(ii)] $\mfa$ and $\mfr$ are cofinal.
\end{itemize}
\end{setup}

\begin{remark}\label{rem1}  If $\mfa=a_1A+\cdots+a_sA$, where $a_1,\dots,a_s$ are normal elements both in $A$ and in $R$, then Setup \ref{set}(i) is automatically satisfied.
\end{remark}

\begin{lemma}\label{lem-ARpro} Both ideals $\mfr$ and $\mfb$ have the right AR-property.
\end{lemma}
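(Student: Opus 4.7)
The plan is to proceed in two stages: first derive the right AR-property for $\mfr$ on $R$ from the hypothesized right AR-property for $\mfa$ on $A$, then derive the right AR-property for $\mfb$ on $B$ from the one for $\mfr$ on $R$ just established. Both reductions lean on the fact that $A\subseteq R\subseteq B$ are finite ring extensions: $R$ is finitely generated as a right $A$-module (a standard consequence of $G$ being a finite group of automorphisms of the noetherian algebra $R$ with $|G|$ invertible in $\kk$), and $B=R*G$ is free of rank $|G|$ as a right $R$-module. So a finitely generated right $R$-module (resp.\ right $B$-module) automatically restricts to a finitely generated right $A$-module (resp.\ right $R$-module), which is what lets us apply the AR-property of the smaller ideal to pairs of modules over the larger ring.

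For $\mfr$: given a finitely generated right $R$-module $M$ and a submodule $N\subseteq M$, view $(N,M)$ as a pair of finitely generated right $A$-modules and apply the AR-property of $\mfa$ to obtain an integer $n'$ with $N\cap M\mfa^{n'}\subseteq N\mfa$. The key observation is that $M\mfa^{n'}$ is actually a right $R$-submodule of $M$: from $\mfa R=R\mfa$ (Setup \ref{set}(ii)) one gets $\mfa^{n'}R=R\mfa^{n'}$ by induction, hence $M\mfa^{n'}\cdot R=MR\mfa^{n'}=M\mfa^{n'}$. Now invoke cofinality to pick $n$ with $\mfr^n\subseteq \mfa^{n'}R$; then
$$M\mfr^n\subseteq M\mfa^{n'}R=M\mfa^{n'},$$
and therefore $N\cap M\mfr^n\subseteq N\cap M\mfa^{n'}\subseteq N\mfa\subseteq N\mfr$, yielding the right AR-property for $\mfr$.

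For $\mfb$: given a finitely generated right $B$-module $M$ with submodule $N$, restrict to the finitely generated right $R$-module pair $(N,M)$. By Lemma \ref{lem-tortrans}(i), $\mfb^k=\mfr^k\otimes\kk G$, so $M\mfb^n=M\mfr^n\cdot \kk G$. Using $G$-stability of $\mfr$ together with the straightening rule $rg=g\cdot g^{-1}(r)$ in $B$, one has $\mfr^n g=g\cdot g^{-1}(\mfr^n)=g\mfr^n$ for every $g\in G$, whence $M\mfr^n\cdot g=Mg\cdot \mfr^n=M\mfr^n$. This shows $M\mfr^n$ is already a right $B$-submodule and $M\mfb^n=M\mfr^n$; the same calculation gives $N\mfb=N\mfr$. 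Applying the AR-property of $\mfr$ to $(N,M)$ produces $n$ with $N\cap M\mfr^n\subseteq N\mfr$, which is precisely $N\cap M\mfb^n\subseteq N\mfb$. The only delicate point is guaranteeing that $M\mfa^{n'}$ is a right $R$-submodule in the first stage; this is exactly what the cofinality hypothesis $\mfa R=R\mfa$ is designed to provide, and without it the comparison between $M\mfr^n$ and $M\mfa^{n'}$ would collapse.
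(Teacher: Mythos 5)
Your proof is correct and follows essentially the same route as the paper: transfer the AR-property from $\mfa$ to $\mfr$ using $\mfa R=R\mfa$ together with cofinality of the filtrations (the paper phrases this via the intermediate ideal $I=R\mfa$, which you fold into one computation), and then pass from $\mfr$ to $\mfb$ by observing $M\mfb^n=M\mfr^n$ for finitely generated right $B$-modules, exactly as the paper indicates.
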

\begin{proof} Let $M$ be a finitely generated right $R$-module, and let $N$ be a $B$-submodule of $M$. Since $R$ is finitely generated as a right $A$-module, both $M$ and $N$ are finitely generated as right $A$-modules. By Setup \ref{set}(ii), $\mfa R=R\mfa$. Let $I=R\mfa$. Then $I^n=R\mfa^n$ for $n>0$. Hence $MI^n=M\mfa^n$. Since $\mfa$ has the right AR-property, $MI^n\cap N=M\mfa^n\cap N\subseteq N\mfa\subseteq NI$ for some $n>0$. Therefore $I$ has the right AR-property. Since the filtrations $I\supseteq I^2\supseteq\cdots\supseteq I^n\supseteq\cdots$ and $\mfr\supseteq\mfr^2\supseteq\cdots\supseteq\mfr^n\supseteq\cdots$ are cofinal, it follows that $\mfr$ has the right AR-property.

Similarly, we may show that $\mfb$ has the right AR-property by noticing that $K\mfb^n=K\mfr^n$ for every finitely generated right $B$-module $K$.
\end{proof}

Let $M$ be a right $A$-module, and let
\begin{equation}\label{eq-res}
  0\to M\to I^0\overset{\delta^0}\to I^{1}\overset{\delta^1}\to\cdots\to I^i\overset{\delta^j}\to\cdots
\end{equation}
be an injective resolution of $M$. Let $T^i=\Gamma_\mfa(I^i)$ for each $i$. By Lemma \ref{lem-torinj}, we see that $T^i$ is an injective $\mfa$-torsion module. Hence, for each $i\ge0$, we have a decomposition $I^i=T^i\oplus E^i$ where $E^i$ is an $\mfa$-torsion-free injective module. The differential $\delta^i$ has a decomposition $\delta^i=\delta^i_E+\delta^i_T+f^i$ where $\delta_E^i:E^i\longrightarrow E^{i+1}$, $\delta_T^i::T^i\longrightarrow T^{i+1}$ and $f^i: E^i\longrightarrow T^{i+1}$. Since $\delta^{i+1}\delta^i=0$, we have $\delta_E^{i+1}\delta_E^i=0$, $\delta_T^{i+1}\delta_T^i=0$ and $f^{i+1}\delta^i_T+\delta^{i+1}_Ef^i=0$. Let $E^\cdot$ (resp. $T^\cdot$) denote the complex with differential $\delta^\cdot_E$ (resp. $\delta_T^\cdot$). Then
\begin{equation}\label{eq-resdec}
  f^\cdot:E^\cdot[-1]\longrightarrow T^\cdot
\end{equation}
is a morphism of complex, and the injective resolution $I^\cdot=cone(f^\cdot)$.

\begin{lemma}\label{lem-locoh} With the notions as above, $R^i\Gamma_\mfa(M)=H^i(T^\cdot)$ for all $i\ge0$, where $H^i(-)$ is the $i$-th cohomology of the complex.
\end{lemma}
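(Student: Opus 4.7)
The plan is essentially to unpack the definition of $R^i\Gamma_\mfa(M)$ via the given injective resolution and track what happens to each summand under $\Gamma_\mfa$. By construction, $R^i\Gamma_\mfa(M) = H^i(\Gamma_\mfa(I^\cdot))$, so the whole task reduces to identifying the complex $\Gamma_\mfa(I^\cdot)$ with $T^\cdot$, differential and all.

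First I would verify that $\Gamma_\mfa(I^i) = T^i$ as modules. This is immediate from the decomposition $I^i = T^i \oplus E^i$: the functor $\Gamma_\mfa$ is additive, $\Gamma_\mfa(T^i) = T^i$ by the very definition of $T^i$, and $\Gamma_\mfa(E^i) = 0$ since $E^i$ was chosen to be $\mfa$-torsion-free.

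Next I would check that, under this identification, the differential induced by $\delta^i$ on $\Gamma_\mfa(I^i)$ agrees with $\delta_T^i$. Writing $\delta^i = \delta_E^i + \delta_T^i + f^i$, I would observe that there is no component $T^i \to E^{i+1}$, because such a map would send an $\mfa$-torsion element into the $\mfa$-torsion-free module $E^{i+1}$ and must therefore be zero (there is no analogous restriction in the other direction, which is exactly why $f^i : E^i \to T^{i+1}$ can be nonzero). Consequently the restriction of $\delta^i$ to $T^i \subseteq I^i$ lands in $T^{i+1}$ and coincides with $\delta_T^i$.

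Combining these two observations gives an equality of complexes $\Gamma_\mfa(I^\cdot) = T^\cdot$, from which $R^i\Gamma_\mfa(M) = H^i(T^\cdot)$ follows immediately. I do not anticipate any real obstacle here: the result is a bookkeeping consequence of the decomposition of $I^\cdot$ already established in the excerpt and of Lemma~\ref{lem-torinj}(i), which guarantees that the torsion-free summands $E^i$ are genuinely injective so the decomposition is legitimate in $\Mod A$.
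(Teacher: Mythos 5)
Your argument is correct and is exactly the intended one: the paper states this lemma without proof precisely because, as you observe, $\Gamma_\mfa(I^i)=T^i$ and any homomorphism from an $\mfa$-torsion module to an $\mfa$-torsion-free module is zero, so $\Gamma_\mfa(I^\cdot)=T^\cdot$ as complexes and hence $R^i\Gamma_\mfa(M)=H^i(T^\cdot)$. One small slip in your closing remark: the legitimacy of the splitting $I^i=T^i\oplus E^i$ rests on Lemma \ref{lem-torinj}(ii) together with the right AR-property of $\mfa$ (these make $T^i=\Gamma_\mfa(I^i)$ injective, hence a direct summand, with complement automatically injective and torsion-free), not on Lemma \ref{lem-torinj}(i); since the decomposition is part of the standing notation "as above", this does not affect your proof.
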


Recall from Section \ref{sec-quot} that $\pi$ is the projection functor $\Mod A\longrightarrow \QMod_\mfa A$, and its right adjoint functor is $\omega:\QMod_\mfa A\longrightarrow\Mod A$. For $M\in \Mod A$, we write $\mathcal{M}$ for the object $\pi(M)$. From the decomposition of the injective resolution (\ref{eq-res}) of $M$ above, we obtain an injective resolution of $\mathcal{M}$ in $\QMod_\mfa A$ (cf. \cite[Corollary 5.4]{P}) $$0\to \mathcal{M}\to \mathcal{E}^0\to\mathcal{E}^1\to\cdots\to\mathcal{E}^i\to\cdots$$ where $\mathcal{E}^i=\pi(E^i)$ for all $i\ge0$.

\begin{lemma} \label{lem-secfun} With the notions as above, $\Ext^i_{\QMod_\mfa A}(\mathcal{A},\mathcal{M})\cong H^i(E^\cdot)$ for all $i\ge0$.
\end{lemma}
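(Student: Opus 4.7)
The plan is to compute $\Ext^i_{\QMod_\mfa A}(\mathcal{A},\mathcal{M})$ directly by applying the functor $\Hom_{\QMod_\mfa A}(\mathcal{A},-)$ to the injective resolution $0\to \mathcal{M}\to \mathcal{E}^\cdot$ of $\mathcal{M}$ in $\QMod_\mfa A$ (which was given above via the decomposition of $I^\cdot$), and then to identify the resulting complex with $E^\cdot$.

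First I would compute the hom-sets $\Hom_{\QMod_\mfa A}(\mathcal{A},\mathcal{E}^i)$. Using formula \eqref{eq1} with $N=A$, which is finitely generated, and recalling that $\Gamma_\mfa(E^i)=0$ because $E^i$ is $\mfa$-torsion-free, we obtain
\[
\Hom_{\QMod_\mfa A}(\mathcal{A},\mathcal{E}^i)=\varinjlim_{n}\Hom_A(\mfa^n,E^i).
\]
Next I would show each $\Hom_A(\mfa^n,E^i)$ is canonically isomorphic to $E^i$. Apply $\Hom_A(-,E^i)$ to the short exact sequence $0\to \mfa^n\to A\to A/\mfa^n\to 0$; since $E^i$ is injective we get the short exact sequence
\[
0\to \Hom_A(A/\mfa^n,E^i)\to E^i\to \Hom_A(\mfa^n,E^i)\to 0.
\]
The left-most term is an $\mfa$-torsion submodule of $E^i$, so it is zero because $E^i$ is $\mfa$-torsion-free. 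The restriction map $E^i\cong \Hom_A(A,E^i)\to \Hom_A(\mfa^n,E^i)$ is therefore an isomorphism, and these isomorphisms are compatible with the transition maps $\Hom_A(\mfa^n,E^i)\to \Hom_A(\mfa^{n+1},E^i)$ coming from $\mfa^{n+1}\hookrightarrow \mfa^n$, so passing to the direct limit gives a natural isomorphism $\Hom_{\QMod_\mfa A}(\mathcal{A},\mathcal{E}^i)\cong E^i$.

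Finally I would verify that under these natural identifications, the coboundary map of the complex $\Hom_{\QMod_\mfa A}(\mathcal{A},\mathcal{E}^\cdot)$ corresponds precisely to $\delta^i_E:E^i\to E^{i+1}$. This is a naturality check: the map $\mathcal{E}^i\to \mathcal{E}^{i+1}$ is $\pi(\delta^i_E)$, and the extension-uniqueness argument from the previous step shows that composition with $\pi(\delta^i_E)$ in $\QMod_\mfa A$ reads off as composition with $\delta^i_E$ in $\Mod A$ after the identification. Since $\mathcal{E}^\cdot$ is an injective resolution in $\QMod_\mfa A$, taking cohomology of $\Hom_{\QMod_\mfa A}(\mathcal{A},\mathcal{E}^\cdot)\cong E^\cdot$ yields $\Ext^i_{\QMod_\mfa A}(\mathcal{A},\mathcal{M})\cong H^i(E^\cdot)$, as required. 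The main (mild) obstacle is the naturality/compatibility bookkeeping in the last step; the vanishing $\Hom_A(A/\mfa^n,E^i)=0$ does most of the work for us and makes the identification canonical.
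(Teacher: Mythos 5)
Your proof is correct and takes essentially the same route as the paper: apply $\Hom_{\QMod_\mfa A}(\mathcal{A},-)$ to the injective resolution $\mathcal{E}^\cdot$ and identify the resulting complex termwise with $E^\cdot$ before taking cohomology. The only difference is cosmetic: you establish the key isomorphism $\Hom_{\QMod_\mfa A}(\mathcal{A},\mathcal{E}^i)\cong E^i$ explicitly from the colimit formula \eqref{eq1}, using injectivity and $\mfa$-torsion-freeness of $E^i$ to see that restriction $E^i\cong\Hom_A(A,E^i)\to\Hom_A(\mfa^n,E^i)$ is bijective, whereas the paper gets the same isomorphism by invoking the adjunction between $\pi$ and $\omega$ together with $\omega(\pi(E^i))\cong E^i$.
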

\begin{proof} Applying the functor $\Hom_{\QMod_\mfa A}(\mathcal{A},-)$ to the injective resolution $\mathcal{E}^\cdot$, we obtain a complex $\Hom_{\QMod_\mfa A}(\mathcal{A},\mathcal{E}^\cdot)$. Since $E^i$ is $\mfa$-torsion-free and injective for all $i\ge0$, we have a natural isomorphism $\omega(\pi(E^i))\cong E^i$. Hence we have natural isomorphisms $\Hom_{\QMod_\mfa A}(\mathcal{A},\mathcal{E}^i)\cong \Hom_A(A,\omega(\pi(E^i)))\cong E^i$. Therefore we have an isomorphism of complexes $\Hom_{\QMod_\mfa A}(\mathcal{A},\mathcal{E}^\cdot)\cong E^\cdot$.
\end{proof}

\begin{proposition}\label{prop-finloc} If $R$ has finite global dimension, then the torsion functor $\Gamma_\mfa$ has finite cohomological dimension.
\end{proposition}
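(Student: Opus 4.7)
The plan is to exploit the finite global dimension of $R$ to bound the cohomological dimension of local cohomology on both $\Mod R$ and $\Mod B$, and then transfer this bound to $\Gamma_\mfa$ on $\Mod A$ by combining the decomposition of the injective resolution used earlier in this section with the equivalence of quotient categories from Theorem \ref{thm-equiv}.

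First, I would bound the $R$- and $B$-sides. Since $\mfr$ has the right AR-property by Lemma \ref{lem-ARpro}, for any $N\in\Mod R$ we have
\[
  R^i\Gamma_\mfr(N)=\varinjlim_k \Ext^i_R(R/\mfr^k,N),
\]
and the uniform bound $\pd_R(R/\mfr^k)\le n:=\gldim R$ forces $R^i\Gamma_\mfr=0$ for $i>n$ on $\Mod R$. A parallel argument works for $\mfb$ on $\Mod B$: because $B$ is a free right $R$-module of rank $|G|$, the identification $B/\mfb^k\cong B\otimes_R R/\mfr^k$ shows that tensoring a projective $R$-resolution of $R/\mfr^k$ with $B$ over $R$ yields a projective $B$-resolution of $B/\mfb^k$, whence $\pd_B(B/\mfb^k)\le n$ and $\Gamma_\mfb$ has cohomological dimension at most $n$ on $\Mod B$.

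Next, I would transfer this bound to $\Gamma_\mfa$ on $\Mod A$. For $M\in\Mod A$, take an injective resolution $0\to M\to I^\cdot$ in $\Mod A$ with the decomposition $I^i=T^i\oplus E^i$ from (\ref{eq-resdec}). Lemmas \ref{lem-locoh} and \ref{lem-secfun} give $R^i\Gamma_\mfa(M)=H^i(T^\cdot)$ and $\Ext^i_{\QMod_\mfa A}(\mathcal{A},\mathcal{M})=H^i(E^\cdot)$. The distinguished triangle $E^\cdot[-1]\to T^\cdot\to I^\cdot\to E^\cdot$ together with $H^i(I^\cdot)=0$ for $i\ge1$ yields an isomorphism
\[
  R^i\Gamma_\mfa(M)\cong \Ext^{i-1}_{\QMod_\mfa A}(\mathcal{A},\mathcal{M}) \quad\text{for } i\ge 2.
\]
Via the equivalence $\Hom_\mathcal{A}(\mathcal{R},-):\QMod_\mfa A\to\QMod_\mfb B$ of Theorem \ref{thm-equiv}, the right-hand side corresponds to an Ext group in $\QMod_\mfb B$. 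A parallel injective-resolution argument on the $B$-side, combined with the bound on $\Gamma_\mfb$ obtained above, shows that these Ext groups vanish for $i$ large enough.

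The main obstacle is this last transfer step: one must track the object $\Hom_\mathcal{A}(\mathcal{R},\mathcal{A})$ in $\QMod_\mfb B$ and verify that the vanishing of higher $R^j\Gamma_\mfb$ for $j>n$ translates into vanishing of the corresponding Ext groups in $\QMod_\mfb B$ for all sufficiently large $i$. Once this is established, $R^i\Gamma_\mfa(M)$ vanishes for $i\gg 0$, proving that $\Gamma_\mfa$ has finite cohomological dimension.
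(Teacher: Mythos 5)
Your global strategy coincides with the paper's: pass to the $B$-side through the equivalence of Theorem \ref{thm-equiv}, and convert $R^i\Gamma_\mfa(M)$ into Ext-groups in $\QMod_\mfa A$ via the torsion/torsion-free splitting of an injective resolution; in particular your isomorphism $R^i\Gamma_\mfa(M)\cong\Ext^{i-1}_{\QMod_\mfa A}(\mathcal{A},\mathcal{M})$ for $i\ge2$ is exactly how the paper concludes. But as written the argument has a genuine gap, and it sits at the one step carrying all the content: you never prove the uniform vanishing of $\Ext^{i}_{\QMod_\mfb B}(\mathcal{X},\mathcal{N})$ for $\mathcal{X}=\Hom_\mathcal{A}(\mathcal{R},\mathcal{A})$ — you only assert that ``a parallel injective-resolution argument'' plus your bound on $\Gamma_\mfb$ gives it, and you yourself flag it as the main obstacle. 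The vanishing $R^j\Gamma_\mfb=0$ for $j>\gldim R$ does not by itself control Ext-groups computed in the quotient category: by Lemma \ref{lem-qext} those are of the form $\underset{n\to\infty}\lim\Ext^i_B(X\mfb^n,-)$, not derived functors of $\Gamma_\mfb$. Also note that ``finite cohomological dimension'' requires a bound uniform in $M$, not just vanishing for $i\gg0$ depending on $M$.

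The gap is fillable, in two ways. To complete your route: the quasi-inverse sends $\mathcal{A}$ to $\pi(eB)\cong\mathcal{R}$ (since $eB\otimes_BBe\cong eBe\cong A$), and $eB$ is a $B$-module direct summand of $B$ because $e$ is idempotent; then either apply Lemma \ref{lem-qext} with $S=B$, $\mathfrak{s}=\mfb$, using $\Ext^i_B(eB\mfb^n,-)=0$ for $i>\gldim B$, or redo Lemmas \ref{lem-locoh} and \ref{lem-secfun} over $B$ (legitimate since $\mfb$ has the right AR-property by Lemma \ref{lem-ARpro}) to get $\Ext^i_{\QMod_\mfb B}(\pi(B),\mathcal{N})\cong R^{i+1}\Gamma_\mfb(N)$ for $i\ge1$, and pass to the summand. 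In either case the real input is $\gldim B\le\gldim R$ (a $B$-module is injective, resp.\ projective, iff it is so over $R$, cf.\ \cite[Proposition 2.6]{HVZ2}), rather than your computed bound on $\Gamma_\mfb$; indeed your bound on $\Gamma_\mfr$ over $\Mod R$ is never used, and for right modules the induced module should be $(R/\mfr^k)\otimes_RB$ rather than $B\otimes_RR/\mfr^k$. The paper's own proof is a cleaner version of the same idea and avoids $\Gamma_\mfb$ entirely: every $N\in\Mod B$ has an injective resolution of length $d=\gldim R$; by the AR-property, Lemma \ref{lem-torinj} and \cite[Corollary 5.4]{P}, its image under $\pi$ is an injective resolution in $\QMod_\mfb B$, so $\QMod_\mfb B$, hence $\QMod_\mfa A$ by the equivalence, has global dimension at most $d$; then $H^i(E^\cdot)=0$ for $i>d$ and the cone sequence give $R^i\Gamma_\mfa(M)=H^i(T^\cdot)=0$ for $i>d+1$, uniformly in $M$.
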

\begin{proof} Assume that the global dimension of $R$ is $d$. For $N\in\Mod B$, let $0\to N\to Q^0\to \cdots\to Q^d\to0$ be an injective resolution. By Lemma \ref{lem-ARpro}, $\mfb$ has the right AR-property. Hence the injective envelope of a $\mfb$-torsion $B$-module is still $\mfb$-torsion by Lemma \ref{lem-torinj}. By \cite[Corollary 5.4]{P}, $0\to\pi(N)\to\pi(Q^0)\to\cdots\to\pi(Q^d)\to0$ is an injective resolution in $\QMod_\mfb B$. Hence the global dimension of $\QMod_\mfb B$ is not larger than $d$. Setup \ref{set} implies that the conditions in Theorem \ref{thm-equiv} are satisfied. Hence the abelian categories $\QMod_\mfa A$ and $\QMod_\mfb B$ are equivalent. Therefore, the global dimension $\QMod_\mfa A$ is not larger than $d$.

For $M\in\Mod A$, let $$0\to M\to I^0\overset{\delta^0}\to I^{1}\overset{\delta^1}\to\cdots\to I^i\overset{\delta^j}\to\cdots$$ be an injective resolution. As we have seen that $I^\cdot=cone(f^\cdot)$ where $f^\cdot:E^\cdot\to T^\cdot$ is the morphism given in (\ref{eq-resdec}). Since the global dimension of $\QMod_\mfa A$ is not larger than $d$, we have $\Ext_{\QMod_\mfa A}^i(\pi(A),\pi(M))=0$ for all $i>d$. Then Lemma \ref{lem-secfun} implies that $H^i(E^
\cdot)=0$ for $i>d$. From the exact sequence $0\to T^\cdot\to I^\cdot\to E^\cdot\to0$ of complexes, we obtain that $H^i(T^\cdot)=0$ for $i>d+1$. Now $R^i\Gamma_\mfa(M)=H^i(\underset{n\to\infty}\lim\Hom_A(A/\mfa^n, I^\cdot))=H^i(T^\cdot)=0$ for $i>d+1$. Therefore $\Gamma_\mfa$ has finite cohomological dimension.
\end{proof}

Note that the global dimension of the invariant subaglebra $A$ is often infinite even if the global dimension of $R$ is finite. For example, it is well known that a nontrivial finite subgroup of the special linear group $SL_n(\kk)$ acting on the polynomial algebra $\kk[x_1,\dots,x_n]$ yields a Gorenstein invariant subalgebra with infinite global dimension. More general, it is known that a finite group acting on an Artin-Schelter regular algebra with trivial homological determinant gives rise to an Artin-Schelter Gorenstein invariant subalgebra (cf. \cite{JZ}). Hence it is reasonable to consider the Cohen-Macaulay modules over the invariant subalgebra.
Similar to the concept introduced in (cf. \cite{Z,CH}), we make the following definition.

\begin{definition} We say that the invariant subalgebra $A=R^G$ is (right) {\it $\mfa$-Cohen-Macaulay of dimension $d$} if $R^i\Gamma_\mfa(A)=0$ for $i\neq d$.
\end{definition}

\begin{example} Recall from Example \ref{ex5} that the ideals $\mfa$ and $\mfr$ satisfy the conditions in Setup \ref{set}. Note that $y^2,z^2,yz$ are normal elements in $R$. The invariant subalgebra $A=\kk[x,y^2,z^2,yz]=\kk[x]\otimes \Lambda$ where $\Lambda=\kk[u,v,w]/(uv-w^2)$. Under this isomorphism, $\mfa\cong \kk[x]\otimes \mathfrak{m}$ where $\mathfrak{m}=\Lambda u+\Lambda v+\Lambda w$. It is well known that $\Lambda$ has injective dimension $2$.  Moreover,  $R^i\Gamma_\mathfrak{m}(\Lambda)=0$ for $i=0,1$ and $R^2\Gamma_\mathfrak{m}(\Lambda)=E(\kk)$, where $E(\kk)$ is the injective envelope of the trivial module $\kk$. Thus, we have: $$R^i\Gamma_\mfa(A)\cong \underset{n\to\infty}\lim\Ext_{\kk[x]\otimes \Lambda}^i(\kk[x]\otimes \Lambda/\mathfrak{m}^n,\kk[x]\otimes\Lambda)\cong\kk[x]\otimes\underset{n\to\infty}\lim\Ext_{\Lambda}^i(\Lambda/\mathfrak{m}^n,\Lambda).
$$ Hence $R^i\Gamma_\mfa(A)=0$ for $i\neq 2$ and $R^2\Gamma_\mfa(A)\cong \kk[x]\otimes E(\kk)$.
\end{example}

\begin{theorem}\label{thm-finproinj} Assume that $R$ has finite global dimension and $A$ is $\mfa$-Cohen-Macaulay of dimension $d$. Set $D=R^d\Gamma_\mfa(A)$.
For $M\in\Mod A$, $$R^i\Gamma_\mfa(M)\cong \Tor_{d-i}^A(M,D)$$ for all $i\ge0$.
\end{theorem}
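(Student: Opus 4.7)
The plan is to apply the local duality isomorphism of Theorem \ref{thm-locdual}(i) to reduce the computation of $R^i\Gamma_\mfa(M)$ to a derived tensor product, and then exploit the Cohen--Macaulay hypothesis to collapse $R\Gamma_\mfa(A)$ to a single shifted module.

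First I would verify the hypotheses of Theorem \ref{thm-locdual}(i). Since $R$ has finite global dimension, Proposition \ref{prop-finloc} guarantees that $\Gamma_\mfa$ has finite cohomological dimension, so Theorem \ref{thm-locdual}(i) applies to any $M\in\Mod A\subseteq D^-(\Mod A)$ to give a natural isomorphism
\[
R\Gamma_\mfa(M)\;\cong\; M\otimes_A^L R\Gamma_\mfa(A)
\]
in $D^-(\Mod A)$. Now the $\mfa$-Cohen--Macaulay assumption says that the bimodule complex $R\Gamma_\mfa(A)$ (obtained by applying $\Gamma_\mfa$ to a bimodule injective resolution of $A$) has cohomology concentrated in a single degree, namely $D=R^d\Gamma_\mfa(A)$ in position $d$. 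A complex with cohomology concentrated in one degree is quasi-isomorphic in the derived category to the shift of that single cohomology module, so $R\Gamma_\mfa(A)\cong D[-d]$ in $D(\Mod A^e)$, and hence in $D^-(\Mod A^\circ)$ as well. Substituting yields
\[
R\Gamma_\mfa(M)\;\cong\; M\otimes_A^L D[-d].
\]

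Finally, to extract the $i$-th cohomology, choose a projective resolution $P_\cdot\to M$ of right $A$-modules. Then
\[
H^i\bigl(R\Gamma_\mfa(M)\bigr)\;=\;H^i\bigl(P_\cdot\otimes_A D[-d]\bigr)\;=\;H_{d-i}\bigl(P_\cdot\otimes_A D\bigr)\;=\;\Tor^A_{d-i}(M,D),
\]
which is the desired isomorphism. The argument is almost entirely formal once Theorem \ref{thm-locdual}(i) is in hand; the only point requiring any care is the passage $R\Gamma_\mfa(A)\simeq D[-d]$ in the derived category of \emph{bimodules}, so that the tensor product $M\otimes_A^L(-)$ sees the right $A$-module structure on $D$ coming from the bimodule structure on $A$. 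This is the step I would expect to write out most carefully, but it is standard once one notes that the cohomology is concentrated in a single degree.
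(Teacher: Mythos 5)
Your proposal is correct and follows essentially the same route as the paper: invoke Proposition \ref{prop-finloc} (using the finite global dimension of $R$, under Setup \ref{set}) to see that $\Gamma_\mfa$ has finite cohomological dimension, apply Theorem \ref{thm-locdual}(i) to get $R\Gamma_\mfa(M)\cong M\otimes_A^L R\Gamma_\mfa(A)\cong M\otimes_A^L D[-d]$, and read off the cohomology as $\Tor^A_{d-i}(M,D)$. The paper's proof is exactly this two-line argument, with your remark about $R\Gamma_\mfa(A)\simeq D[-d]$ as a complex of bimodules being the (implicit) justification the paper leaves to the reader.
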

\begin{proof}  By Proposition \ref{prop-finloc} and Theorem \ref{thm-locdual}, we have $R\Gamma_\mfa(M)\cong M\otimes_A^LR\Gamma_\mfa(A)\cong M\otimes_A^L D[-d]$. Taking the cohomology of the complexes, we obtain the desired isomorphisms.
\end{proof}

\section{Extension groups in the quotient categories}\label{sec7}

Let $S$ be a noetherian algebra, and let $\mathfrak{s}$ be an ideal of $S$. Assume that $\mathfrak{s}$ has the right AR-property. We have the following computation of extension groups in the quotient category.

\begin{lemma} \label{lem-qext} If $N$ is a finitely generated right $S$-module and $M$ is a right $S$-module, then $\Ext^i_{\QMod_{\mathfrak{s}} S}(\mathcal{N},\mathcal{M})=\underset{n\to\infty}\lim\Ext^i_S(N\mathfrak{s}^n,M)$ for $i\ge0$.
\end{lemma}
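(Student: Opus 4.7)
The plan is to use the injective-resolution decomposition introduced in Section \ref{sec6} to realise $\Ext^i_{\QMod_{\mathfrak{s}} S}(\mathcal{N},\mathcal{M})$ as the cohomology of a complex built from $\Hom_S(N\mathfrak{s}^n,-)$, and then to commute the direct limit past the cohomology.

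First I would take an injective resolution $0\to M\to I^\cdot$ in $\Mod S$ and, exactly as in the paragraph preceding \eqref{eq-resdec}, decompose it degree-wise as $I^j=T^j\oplus E^j$, where $T^j=\Gamma_\mathfrak{s}(I^j)$ is $\mathfrak{s}$-torsion injective (using Lemma \ref{lem-torinj} together with the AR-property of $\mathfrak{s}$) and $E^j$ is $\mathfrak{s}$-torsion-free injective. Applying $\pi$ kills $T^\cdot$, so $0\to\mathcal{M}\to\pi(E^\cdot)\cong\pi(I^\cdot)$ is an injective resolution of $\mathcal{M}$ in $\QMod_{\mathfrak{s}} S$. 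Since each $E^j$ is $\mathfrak{s}$-torsion-free injective, $\omega\pi(E^j)\cong E^j$, and the description \eqref{eq1} of hom-sets yields $\Hom_{\QMod_{\mathfrak{s}} S}(\mathcal{N},\pi(E^j))\cong\underrightarrow{\lim}_n\Hom_S(N\mathfrak{s}^n,E^j)$; hence
$$\Ext^i_{\QMod_{\mathfrak{s}} S}(\mathcal{N},\mathcal{M})\cong H^i\Bigl(\underrightarrow{\lim}_n\Hom_S(N\mathfrak{s}^n,E^\cdot)\Bigr).$$

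The key step is to replace $E^\cdot$ by $I^\cdot$ inside the limit, which amounts to showing $\underrightarrow{\lim}_n\Hom_S(N\mathfrak{s}^n,T^j)=0$ for every $j$: given $f\colon N\mathfrak{s}^n\to T^j$, the image $f(N\mathfrak{s}^n)$ is a finitely generated submodule of the $\mathfrak{s}$-torsion module $T^j$, hence annihilated by some $\mathfrak{s}^m$, so the restriction of $f$ to $N\mathfrak{s}^{n+m}$ vanishes and $f$ represents $0$ in the colimit. This is where finite generation of $N$ is essential, because it allows a single power of $\mathfrak{s}$ to kill the whole image. Applying the exact functor $\underrightarrow{\lim}_n\Hom_S(N\mathfrak{s}^n,-)$ to the degree-wise split sequence $0\to T^\cdot\to I^\cdot\to E^\cdot\to 0$ then gives an isomorphism of complexes $\underrightarrow{\lim}_n\Hom_S(N\mathfrak{s}^n,I^\cdot)\cong\underrightarrow{\lim}_n\Hom_S(N\mathfrak{s}^n,E^\cdot)$. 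Since directed colimits over $\mathbb{N}$ are exact and thus commute with cohomology, this combines with the previous display to give
$$\Ext^i_{\QMod_{\mathfrak{s}} S}(\mathcal{N},\mathcal{M})\cong \underrightarrow{\lim}_n H^i\bigl(\Hom_S(N\mathfrak{s}^n,I^\cdot)\bigr)=\underrightarrow{\lim}_n\Ext^i_S(N\mathfrak{s}^n,M),$$
as required. The only real obstacle is the vanishing of $\underrightarrow{\lim}_n\Hom_S(N\mathfrak{s}^n,T^j)$; the rest is bookkeeping with the quotient-category formalism of Section \ref{sec-quot}.
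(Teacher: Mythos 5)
Your proposal is correct and follows essentially the same route as the paper: decompose an injective resolution of $M$ into $\mathfrak{s}$-torsion and torsion-free injective summands via the AR-property, identify $\Hom_{\QMod_{\mathfrak{s}}S}(\mathcal{N},\pi(E^j))$ with $\underrightarrow{\lim}_n\Hom_S(N\mathfrak{s}^n,E^j)$, show the torsion summands die in the limit, and commute the filtered colimit with cohomology. Your direct argument for the vanishing of $\underrightarrow{\lim}_n\Hom_S(N\mathfrak{s}^n,T^j)$ (finitely generated image killed by a power of $\mathfrak{s}$) is a harmless variant of the paper's factoring argument and is equally valid.
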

\begin{proof} Let $T$ be an injective $\mathfrak{s}$-torsion $S$-module. We claim $\underset{n\to\infty}\lim\Hom_S(N\mathfrak{s}^n,T)=0$. Indeed, applying the functor $\Hom_S(-,T)$ to the exact sequence $0\to N\mathfrak{s}^n\to N\to N/N\mathfrak{s}^n\to0$, we obtain the exact sequence $0\to \Hom_S(N/N\mathfrak{s}^n,E)\to\Hom_S(N,T)\to \Hom_S(N\mathfrak{s}^n,T)\to0$. Taking the direct limit we have $$0\to \underset{n\to\infty}\lim\Hom_S(N/N\mathfrak{s}^n,E)\to\Hom_S(N,T)\to \underset{n\to\infty}\lim\Hom_S(N\mathfrak{s}^n,T)\to0.$$ Since $N$ is finitely generated, for any $S$-module homomorphism $f:N\to T$, there is an integer $n$ such that $f(N\mathfrak{s}^n)=f(N)\mathfrak{s}^n=0$. Hence $N\mathfrak{s}^n\subseteq \ker f$. Therefore, $f:N\to T$ factors though $N/N\mathfrak{s}^n$. Thus the morphism $\underset{n\to\infty}\lim\Hom_S(N/N\mathfrak{s}^n,T)\to\Hom_S(N,T)$ in the exact sequence above is an epimorphism. So, the claim holds.

We continue to prove the lemma.
Take a minimal injective resolution of the right $S$-module $M$ as follows:
\begin{equation}\label{eq-1inj}
  0\to M\to I^0\to I^2\to\cdots\to I^k\to\cdots.
\end{equation}
Since $\mathfrak{s}$ has the right AR-property, the injective module $I^i$ $(i\ge0)$ has a decomposition $I^i=T^i\oplus E^k$ with $T^i$ a $\mathfrak{s}$-torsion submodule and $E^i$ a $\mathfrak{s}$-torsion free submodule. Applying the projection functor $\pi:\Mod S\longrightarrow\QMod_\mathfrak{s} S$ to the projective resolution (\ref{eq-inj}), we obtain the following exact sequence
\begin{equation}\label{eq-1injq}
  0\to \mathcal{M}\to \mathcal{E}^0\to \mathcal{E}^2\to\cdots\to \mathcal{E}^i\to\cdots.
\end{equation}
Since $E^i$ is $\mathfrak{s}$-torsion free, $\mathcal{E}^i$ is injective in $\QMod_\mathfrak{s} S$ for all $i\ge0$. Hence the exact sequence (\ref{eq-1injq}) provides an injective resolution of $\mathcal{M}$ in $\QMod_\mathfrak{s} S$.
 Applying the functor $\Hom_{\QMod_\mathfrak{s} S}(\mathcal{N},-)$ to (\ref{eq-1injq}), we have the following complex
\begin{equation}\label{eq-1coh2}
  0\to\Hom_{\QMod_\mathfrak{s} S}(\mathcal{N},\mathcal{E}^0)\to\cdots\to \Hom_{\QMod_\mathfrak{s} S}(\mathcal{N},\mathcal{E}^i)\to\cdots.
\end{equation}
 By Equation (\ref{eq1}), the complex (\ref{eq-1coh2}) is isomorphic to the following complex
\begin{equation}\label{eq-1coh4}
  0\to\underset{n\to\infty}\lim\Hom_ S(N\mathfrak{s}^n,E^0)\to\cdots\to \underset{n\to\infty}\lim\Hom_S(N\mathfrak{s}^n,E^i)\to\cdots.
\end{equation}
By the above claim, (\ref{eq-1coh4}) is isomorphic to
\begin{equation}\label{eq-1coh5}
  0\to\underset{n\to\infty}\lim\Hom_ S(N\mathfrak{s}^n,E^0\oplus T^0)\to\cdots\to \underset{n\to\infty}\lim\Hom_S(N\mathfrak{s}^n,E^i\oplus T^i)\to\cdots,
\end{equation}
which is equivalent to
\begin{equation}\label{eq-1coh6}
  0\to\underset{n\to\infty}\lim\Hom_ S(N\mathfrak{s}^n,I^0)\to\cdots\to \underset{n\to\infty}\lim\Hom_S(N\mathfrak{s}^n,I^i)\to\cdots.
\end{equation}
By (\ref{eq-1inj}), the $i$-th cohomology of the complex (\ref{eq-1coh6}) is $\underset{n\to\infty}\lim\Ext_S^i(N\mathfrak{s}^n,M)$.
\end{proof}

Throughout the rest of this section, we let $G$ be a finite group, and let $R$ be a noetherian left $G$-module algebra. As before, write $B=R* G$, $A=R^G$, $\mfr=\mfr(R,G)$, $\mfb=\mfr\otimes \kk G$ and $\mfa=A\cap \mfr$. Assume that the radical $\mfr$ has the right AR-property. It follows from the proof of Lemma \ref{lem-ARpro} that $\mfb$ has the right AR-property as well.

Let $N$ and $M$ be $B$-modules. Assume that $N$ is finitely generated. Note that we may view $N$ and $M$ as right $R$-modules. Then there is a right $G$-action $\leftharpoonup$ on $\Hom_R(N,M)$ defined by
\begin{equation}\label{eq-gact}
  (f\leftharpoonup g)(n)=f(ng^{-1})g, \text{ for all } g\in G, f\in\Hom_R(N,M), n\in N.
\end{equation}
With this right $G$-action on $\Hom_R(N,M)$, we have $$\Hom_B(N,M)=\Hom_R(N,M)^G,$$ where the right hand side is the $G$-invariant subspace. Consider the Hom-sets in the quotient categories $\QMod_\mfr R$ and $\QMod_\mfb B$. For any $n\ge0$, $N\mfr^n$ is a right $B$-submodule, and hence $\Hom_R(N\mfr^n,M)$ has a right $G$-action. It is easy to see that the direct limit system $\underset{n\rightarrow\infty}\lim\Hom_R(N\mfr^n,M)$ is compatible with the right $G$-actions. By Lemma \ref{lem-qext}, $\Hom_{\QMod_\mfr R}(\mathcal{N},\mathcal{M})=\underset{n\rightarrow\infty}\lim\Hom_R(N\mfr^n,M)$ has a right $G$-action. Moreover, since $N\mfr^n=N\mfb^n$, we have
\begin{eqnarray*}
\Hom_{\QMod_\mfb B}(\mathcal{N},\mathcal{M})&=&\underset{n\rightarrow\infty}\lim\Hom_B(N\mfb^n,M)\\
&=&\underset{n\rightarrow\infty}\lim\Hom_R(N\mfr^n,M)^G\\
&=&(\underset{n\rightarrow\infty}\lim\Hom_R(N\mfr^n,M))^G\\
&=&\Hom_{\QMod R}(\mathcal{N},\mathcal{M})^G,
\end{eqnarray*}
where the last equality holds because $G$ is a finite group. Summarizing the above arguments, we obtain the the following result.

\begin{lemma}\label{lem-gact} Let $N$ and $M$ be right $B$-modules. Assume that $N$ is finitely generated. Then there is a right $G$-action on $\Hom_{\QMod_\mfr R}(\mathcal{N},\mathcal{M})$ induced from (\ref{eq-gact}). Moreover, we have $$\Hom_{\QMod_\mfb B}(\mathcal{N},\mathcal{M})\cong \Hom_{\QMod R}(\mathcal{N},\mathcal{M})^G.$$
\end{lemma}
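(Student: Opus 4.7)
The plan is to follow the strategy laid out in the paragraphs preceding the statement. First I would isolate the key combinatorial input: for each $n \ge 0$ one has $N\mfr^n = N\mfb^n$ as $B$-submodules of $N$, which is immediate from Lemma~\ref{lem-tortrans}(i) together with the fact that $N$ carries a $B$-action. In particular each $N\mfr^n$ is stable under both $R$ and $G$, which is the crucial structural observation that makes the formula (\ref{eq-gact}) available on these subspaces.

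Next I would verify that (\ref{eq-gact}) gives a well-defined right $G$-action on each $\Hom_R(N\mfr^n,M)$. The verification that $f \leftharpoonup g$ is $R$-linear boils down to the identity $rh = h(h^{-1}\cdot r)$ in $B$, applied with $h = g^{-1}$; the group-action axioms are then routine. Since the restriction maps $\Hom_R(N\mfr^n,M) \to \Hom_R(N\mfr^{n+1},M)$ intertwine these $G$-actions, the colimit $\underset{n\to\infty}{\lim}\,\Hom_R(N\mfr^n,M)$ inherits a right $G$-action; in view of Lemma~\ref{lem-qext} applied with $(S,\mathfrak{s}) = (R,\mfr)$, this transports to a canonical right $G$-action on $\Hom_{\QMod_\mfr R}(\mathcal{N},\mathcal{M})$, establishing the first assertion.

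For the identification in the second assertion, the key observation is that for any $B$-submodule $X \subseteq N$ the subspace of $R$-linear maps $X \to M$ fixed by the induced $G$-action is exactly $\Hom_B(X,M)$. Taking $X = N\mfb^n = N\mfr^n$ and applying Lemma~\ref{lem-qext} to $(B,\mfb)$ yields
\[
\Hom_{\QMod_\mfb B}(\mathcal{N},\mathcal{M}) \;=\; \underset{n\to\infty}{\lim}\,\Hom_R(N\mfr^n,M)^G,
\]
so that it remains to pull the colimit inside the $G$-invariant functor; Lemma~\ref{lem-qext} over $R$ then matches the result with $\Hom_{\QMod_\mfr R}(\mathcal{N},\mathcal{M})^G$.

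The only point requiring care is the commutation of the direct limit with $(-)^G$, and this is where the finiteness of $G$ is essential: $(-)^G$ can be realized as the equalizer of the two finite families $\{\leftharpoonup g : g \in G\}$ and $\{\mathrm{id} : g \in G\}$ of endomorphisms, hence as a finite limit in $\Mod \kk$, and finite limits commute with filtered colimits of $\kk$-vector spaces. With this interchange in hand, the displayed chain of equalities produces the claimed isomorphism, and the remainder is bookkeeping.
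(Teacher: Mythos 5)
Your proposal is correct and follows essentially the same route as the paper: both use the direct-limit description of Hom in the quotient categories (Lemma \ref{lem-qext}) over $(R,\mfr)$ and $(B,\mfb)$, the identification $N\mfr^n=N\mfb^n$, the identity $\Hom_B(X,M)=\Hom_R(X,M)^G$ for $B$-submodules $X$, and the finiteness of $G$ to interchange the filtered colimit with $(-)^G$. Your extra details (the $R$-linearity check via $rg^{-1}=g^{-1}(g\cdot r)$ and the finite-limit justification of the interchange) merely flesh out steps the paper leaves implicit.
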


If $M'$ is another $B$-module and $f:M\to M'$ is a $B$-module homomorphism, one sees that $f$ is compatible with the right $G$-module structures on $\Hom_R(N,M)$ and $\Hom_R(N,M')$. Moreover, $f$ is compatible with the direct limit systems in the above narratives. Hence $f$ induces a $G$-module homomorphism $$\Hom_{\QMod_\mfr R}(\mathcal{N},\mathcal{M})\longrightarrow \Hom_{\QMod_\mfr R}(\mathcal{N},\mathcal{M}').$$

Next we show that $\Ext^i_{\QMod R}(\mathcal{N},\mathcal{M})$ has a right $G$-action as well, and the above isomorphism may be extended to the extension groups.
Take a minimal injective resolution of the right $B$-module $M$ as follows:
\begin{equation}\label{eq-inj}
  0\to M\to I^0\to I^2\to\cdots\to I^k\to\cdots.
\end{equation}
Since $\mfb$ has the right AR-property, as we have seen in the proof of Lemma \ref{lem-qext}, the injective module $I^i$ $(i\ge0)$ has a decomposition $I^i=T^i\oplus E^k$ with $T^i$ a $\mfb$-torsion submodule and $E^i$ a $\mfb$-torsion free submodule. Then we have an injective resolution of $\mathcal{M}$ in $\QMod_\mfb B$:
\begin{equation}\label{eq-injq}
  0\to \mathcal{M}\to \mathcal{E}^0\to \mathcal{E}^2\to\cdots\to \mathcal{E}^i\to\cdots.
\end{equation}

\begin{proposition}\label{prop-qext} Let $N$ and $M$ be right $B$-modules. Assume that $N$ is finitely generated. The following statements hold:
\begin{itemize}
  \item [(i)] $\Ext^i_{\QMod_\mfr R}(\mathcal{N},\mathcal{M})$ has a right $G$-action for all $i\ge0$.
  \item [(ii)] $\Ext^i_{\QMod_\mfr R}(\mathcal{N},\mathcal{M})^G\cong \Ext^i_{\QMod_\mfb B}(\mathcal{N},\mathcal{M})$ for all $i\ge0$.
  \end{itemize}
\end{proposition}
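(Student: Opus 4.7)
The plan is to take a single injective resolution of $M$ in $\Mod B$ and reuse it as an injective resolution of $\mathcal{M}$ in both $\QMod_\mfb B$ and $\QMod_\mfr R$, then relate the two Ext-computations by taking $G$-invariants on the $\Hom$ complex. First, take a minimal injective resolution $0 \to M \to I^0 \to I^1 \to \cdots$ in $\Mod B$ and decompose each $I^i = T^i \oplus E^i$ with $T^i$ the $\mfb$-torsion part and $E^i$ the $\mfb$-torsion-free part, as in the proof of Lemma \ref{lem-qext}. Since $B = R * G$ is free as a left $R$-module, the induction functor $- \otimes_R B : \Mod R \to \Mod B$ is exact, so its right adjoint (restriction to $\Mod R$) preserves injectives; hence each $I^i$ is injective as a right $R$-module as well. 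By Lemma \ref{lem-tortrans}(ii), being $\mfb$-torsion as a $B$-module and $\mfr$-torsion as an $R$-module are the same property, so the decomposition $I^i = T^i \oplus E^i$ also splits $I^i$ into its $\mfr$-torsion and $\mfr$-torsion-free parts as an $R$-module. Lemma \ref{lem-ARpro} gives the right AR-property for $\mfr$, so arguing exactly as in the proof of Lemma \ref{lem-qext}, the complex $0 \to \mathcal{M} \to \mathcal{E}^0 \to \mathcal{E}^1 \to \cdots$ is an injective resolution of $\mathcal{M}$ in $\QMod_\mfr R$.

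For (i), compute $\Ext^i_{\QMod_\mfr R}(\mathcal{N}, \mathcal{M})$ as the $i$-th cohomology of the complex $\Hom_{\QMod_\mfr R}(\mathcal{N}, \mathcal{E}^\cdot)$. By Lemma \ref{lem-gact}, each term carries the right $G$-action induced from (\ref{eq-gact}), and the differentials, being pushforward by $B$-module maps between the $E^i$'s, are $G$-equivariant with respect to that action (a short direct check using the definition of $\leftharpoonup$). Consequently the cohomology inherits a right $G$-action, proving (i).

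For (ii), the existence of the integral $e = \frac{1}{|G|} \sum_{g \in G} g$ used throughout the paper forces $\ch \kk \nmid |G|$, so $(-)^G$ is an exact endofunctor of $\kk G$-modules and commutes with taking cohomology. Combining this with the termwise identification $\Hom_{\QMod_\mfr R}(\mathcal{N}, \mathcal{E}^i)^G \cong \Hom_{\QMod_\mfb B}(\mathcal{N}, \mathcal{E}^i)$ of Lemma \ref{lem-gact}, and with the fact that $\mathcal{E}^\cdot$ is likewise an injective resolution of $\mathcal{M}$ in $\QMod_\mfb B$ (namely (\ref{eq-injq})), we obtain
\begin{equation*}
\Ext^i_{\QMod_\mfr R}(\mathcal{N},\mathcal{M})^G \;\cong\; H^i\bigl(\Hom_{\QMod_\mfb B}(\mathcal{N}, \mathcal{E}^\cdot)\bigr) \;\cong\; \Ext^i_{\QMod_\mfb B}(\mathcal{N}, \mathcal{M}).
\end{equation*}
The only real subtlety is ensuring that the two torsion theories are compatible along the inclusion $R \hookrightarrow B$ so that one resolution serves both quotient categories; once Lemma \ref{lem-tortrans}(ii) and the left $R$-freeness of $B$ are in hand, the rest is formal $G$-equivariant bookkeeping.
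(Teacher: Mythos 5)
Your proposal is correct and follows essentially the same route as the paper: one minimal injective $B$-resolution of $M$, the torsion/torsion-free splitting $I^i=T^i\oplus E^i$, reuse of $\mathcal{E}^\cdot$ as an injective resolution of $\mathcal{M}$ in both $\QMod_\mfb B$ and $\QMod_\mfr R$, and the termwise identification from Lemma \ref{lem-gact} followed by exactness of $(-)^G$. The only differences are cosmetic: you prove that injective $B$-modules restrict to injective $R$-modules via the induction/restriction adjunction where the paper cites \cite[Proposition 2.6]{HVZ2}, and you make explicit the use of $\ch\kk\nmid|G|$ that the paper leaves implicit.
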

\begin{proof} (i) Note that a $B$-module is injective if and only if it is injective as an $R$-module (cf. \cite[Proposition 2.6]{HVZ2}). Then we may view (\ref{eq-inj}) as an injective resolution of the $R$-module $M$ in $\Mod R$. Since a $B$-module is $\mfb$-torsion free if and only if it is $\mfr$-torsion free as an $R$-module, the exact sequence (\ref{eq-injq}) is also an injective resolution of $\mathcal{M}$ in $\QMod_\mfr R$.
Applying the functor $\Hom_{\QMod_\mfr R}(\mathcal{N},-)$ to the injective resolution (\ref{eq-injq}), we obtain a complex of right $G$-modules
\begin{equation}\label{eq-coh}
  0\to\Hom_{\QMod_\mfr R}(\mathcal{N},\mathcal{E}^0)\to\cdots\to \Hom_{\QMod_\mfr R}(\mathcal{N},\mathcal{E}^i)\to\cdots.
\end{equation}
Taking the cohomology of the above complex, we obtain $\Ext^i_{\QMod_\mfr R}(\mathcal{N},\mathcal{M})$ which inherits the right $G$-module structure on $\Hom_{\QMod_\mfr R}(\mathcal{N},\mathcal{E}^i)$. The statement (i) follows.\\

(ii) Applying the functor $\Hom_{\QMod_\mfb B}(\mathcal{N},-)$ to (\ref{eq-injq}), we obtain the following complex
\begin{equation}\label{eq-coh2}
  0\to\Hom_{\QMod_\mfb B}(\mathcal{N},\mathcal{E}^0)\to\cdots\to \Hom_{\QMod_\mfb B}(\mathcal{N},\mathcal{E}^i)\to\cdots.
\end{equation}
By Lemma \ref{lem-gact}, the complex (\ref{eq-coh2}) is isomorphic to the following complex
\begin{equation}\label{eq-coh3}
  0\to\Hom_{\QMod_\mfr R}(\mathcal{N},\mathcal{E}^0)^G\to\cdots\to \Hom_{\QMod_\mfr R}(\mathcal{N},\mathcal{E}^i)^G\to\cdots.
\end{equation}
Taking the cohomology of the complex (\ref{eq-coh3}), we obtain the desired isomorphisms in (ii).
\end{proof}

Recall that a noetherian algebra $S$ with finite GK-dimension is called a {\it right GKdim-Cohen-Macaulay algebra} if for any finitely generated right $S$-module $K$, $\GKdim(K)+j_S(K)=\GKdim(S)$, where $j_S(M)=\min\{i|\Ext_S^i(M,S)\neq 0\}$.

Now we return to our noetherian algebra $R$ with a $G$-action. Assume that the GK-dimension on right $R$-modules is exact, that is, if $0\to N\to M\to K\to0$ is an exact sequence of finitely generated right $B$-modules, then $\GKdim(M)=\max\{\GKdim(N),\GKdim(K)\}$. For instance, if $R$ is $\mathbb{Z}$-graded or filtered with an ascending locally finite filtration, then the GK-dimension is exact (cf. \cite{KL}). For any integer $n\ge1$, we have $\GKdim(R/\mfr)=\GKdim(R/\mfr^n)$. If furthermore, $R$ is right GKdim-Cohen-Macaulay, then $j_R(R/\mfr^n)=\GKdim(R)-\GKdim(R/\mfr^n)=\Pty(R,G)$. We next show that $\depth_{\mfr}(R)$ is often equal to the pertinency $\Pty(R,G)$.

\begin{proposition}\label{prop3-2} Let $G$ be a finite group, and let $R$ be a noetherian $G$-module algebra with finite GK-dimension. Assume that $R$ is GKdim-Cohen-Macaulay and that the GK-dimension is exact on right $R$-modules. Then $\depth_\mfr(R)=\Pty(R,G)$.
\end{proposition}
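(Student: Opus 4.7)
The plan is to compute the direct system $\{\Ext^i_R(R/\mfr^n,R)\}_{n\ge1}$ directly, using the GKdim--Cohen--Macaulay property to control the grade $j_R$, and then exploit a short exact sequence argument to see that the transition maps at the critical index are injective.

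Set $p=\Pty(R,G)=\GKdim(R)-\GKdim(R/\mfr)$. The first step is to verify that $\GKdim(R/\mfr^n)=\GKdim(R/\mfr)$ for every $n\ge1$. The surjection $R/\mfr^n\twoheadrightarrow R/\mfr$ gives one inequality. For the reverse, the filtration
\[
R/\mfr^n\supseteq \mfr/\mfr^n\supseteq\cdots\supseteq \mfr^{n-1}/\mfr^n\supseteq 0
\]
has successive quotients $\mfr^k/\mfr^{k+1}$ which are finitely generated $R/\mfr$-modules, hence of GK-dimension at most $\GKdim(R/\mfr)$; exactness of GK-dimension on right $R$-modules then yields $\GKdim(R/\mfr^n)\le\GKdim(R/\mfr)$. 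Combined with the GKdim--CM hypothesis on $R$, this gives $j_R(R/\mfr^n)=p$ for every $n\ge 1$.

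The vanishing half is then immediate: for $i<p$ we have $\Ext^i_R(R/\mfr^n,R)=0$ for every $n$, so $R^i\Gamma_\mfr(R)=\varinjlim_n\Ext^i_R(R/\mfr^n,R)=0$, which shows $\depth_\mfr(R)\ge p$. For the non-vanishing at $i=p$, I would consider, for $m\ge n$, the exact sequence
\[
0\longrightarrow \mfr^n/\mfr^m\longrightarrow R/\mfr^m\longrightarrow R/\mfr^n\longrightarrow 0.
\]
The module $\mfr^n/\mfr^m$ is annihilated by a power of $\mfr$, so by the same filtration argument as above it has GK-dimension at most $\GKdim(R/\mfr)$; hence $j_R(\mfr^n/\mfr^m)\ge p$ and in particular $\Ext^{p-1}_R(\mfr^n/\mfr^m,R)=0$. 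The associated long exact $\Ext$-sequence therefore shows that the transition map $\Ext^p_R(R/\mfr^n,R)\to\Ext^p_R(R/\mfr^m,R)$ is injective. Since $\Ext^p_R(R/\mfr,R)\neq0$ (because $j_R(R/\mfr)=p$), injectivity of all the transition maps forces $R^p\Gamma_\mfr(R)=\varinjlim_n\Ext^p_R(R/\mfr^n,R)\neq 0$, so $\depth_\mfr(R)\le p$.

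Combining the two inequalities gives $\depth_\mfr(R)=p=\Pty(R,G)$. The only genuinely delicate step is the control of $\GKdim(\mfr^n/\mfr^m)$, which is what forces the hypothesis of exactness of the GK-dimension (as supplied, e.g., by a $\mathbb Z$-grading or a locally finite filtration on $R$); everything else is a formal consequence of the definitions and the GKdim--CM property.
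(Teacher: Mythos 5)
Your proof is correct, and the vanishing half ($R^i\Gamma_\mfr(R)=0$ for $i<p$ via $j_R(R/\mfr^n)=p$) coincides with the paper's argument; but your non-vanishing half takes a genuinely different route. The paper works with a minimal injective resolution $I^\cdot$ of $R$: having established $\depth_\mfr(R)\ge p$, it invokes Lemma \ref{lem-injres} to see that $I^i$ is $\mfr$-torsion-free for $i<p$, uses $\Ext^p_R(R/\mfr^n,R)\neq0$ to get $\Gamma_\mfr(I^p)\neq0$, and then uses essentiality of $\ker\delta^p$ in $I^p$ (minimality) to conclude $R^p\Gamma_\mfr(R)=\Gamma_\mfr(I^p)\cap\ker\delta^p\neq0$. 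You instead stay entirely inside the direct system: from $0\to\mfr^n/\mfr^m\to R/\mfr^m\to R/\mfr^n\to0$ and $j_R(\mfr^n/\mfr^m)\ge p$ (which your filtration/exactness argument for $\GKdim$ legitimately supplies, and which the paper only states without proof just before the proposition) you get that every transition map $\Ext^p_R(R/\mfr^n,R)\to\Ext^p_R(R/\mfr^m,R)$ is injective, so the nonzero term $\Ext^p_R(R/\mfr,R)$ survives in the colimit. Your route avoids minimal injective resolutions and Lemma \ref{lem-injres} altogether and yields slightly more, namely an embedding $\Ext^p_R(R/\mfr,R)\hookrightarrow R^p\Gamma_\mfr(R)$; the paper's route is shorter given that Lemma \ref{lem-injres} is already available and reused elsewhere. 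One shared caveat (not a defect of your argument relative to the paper's): both proofs tacitly assume $\mfr\neq R$, so that $R/\mfr\neq0$ and $j_R(R/\mfr)=p$ makes sense.
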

\begin{proof} Let $p=\Pty(R,G)$. As we know, $j_R(R/\mfr^n)=p$ for $n>0$. Then $\Ext^i_R(R/\mfr^n,R)=0$ for all $i<p$. Hence $R^i\Gamma_\mfr(R)=\underset{n\to\infty}\lim\Ext^i_R(R/\mfr^n,R)=0$ for all $i<\Pty(R,G)$. Let $0\to R\to I^0\overset{\delta^0}\to I^1\overset{\delta^1}\to\cdots\to I^i\overset{\delta^i}\to\cdots$ be a minimal injective resolution of $R$. By Lemma \ref{lem-injres}, $I^i$ is $\mfr$-torsion free for all $i<p$. Since $j_R(R/\mfr^n)=p$ for all $n>0$, $\Gamma_\mfr(I^p)\neq0$. Then $\Gamma_\mfr(I^p)\cap \ker \delta^p\neq0$ since $I^\cdot$ is a minimal injective resolution. Since $I^i$ is $\mfr$-torsion free for all $i<p$, it follows that $R^p\Gamma_\mfr(R)=\Gamma_\mfr(I^p)\cap\ker\delta^p\neq0$. Hence $\depth_\mfr(R)=p$.
\end{proof}

\begin{remark} In Proposition \ref{prop3-2}, we don't need the assumption that $\mfr$ has the right AR-property.
\end{remark}

The above proposition shows that $R^i\Gamma_\mfr(R)=0$ for $i<\Pty(R,G)$. Sometimes the local homology of $R$ will be concentrated in degree $p=\Pty(R,G)$ as shown in the next example.

\begin{example} Let $R=\kk[x,y,z]$ and let $\sigma$ be the automorphism of $R$ defined by $\sigma(x)=x$, $\sigma(y)=-y$ and $\sigma(z)=-z$. Let $G=\langle \sigma\rangle$. Then $\mfr=\mfr(R,G)=Ry+Rz$, and $\Pty(R,G)=2$. Let $\Lambda=\kk[y,z]$, and $\mathfrak{m}=(y,z)$ the maximal ideal generated by $y,z$. For any $n\ge1$, $R/\mfr^n\cong \kk[x]\otimes \Lambda/\mathfrak{m}^n$. Hence
\begin{eqnarray*}
 R^i\Gamma_\mfr(R)&=&\underset{n\to\infty}\lim\Ext^i_R(R/\mfr^n,R)\\
 &=&\underset{n\to\infty}\lim\Ext^i_{\kk[x]\otimes \Lambda}(\kk[x]\otimes\Lambda/\mathfrak{m}^n,\kk[x]\otimes\Lambda)\\
 &\cong& \kk[x]\otimes\underset{n\to\infty}\lim\Ext^i_{ \Lambda}(\Lambda/\mathfrak{m}^n,\Lambda).
\end{eqnarray*}
Now $\underset{n\to\infty}\lim\Ext^i_{ \Lambda}(\Lambda/\mathfrak{m}^n,\Lambda)=0$ for $i\neq 2$, and $\underset{n\to\infty}\lim\Ext^2_{ \Lambda}(\Lambda/\mathfrak{m}^n,\Lambda)\cong E(\kk)$, where $E(\kk)$ is the injective envelope of $\kk$ as a $\Lambda$-module.
\end{example}

\begin{definition} \label{def-gcm} Let $G$ be a finite group, and let $R$ be a noetherian $G$-module algebra with finite GK-dimension. If $R^i\Gamma_\mfr(R)=0$ for all $i\neq \Pty(R,G)$, then we call $R$ a {\it $G$-Cohen-Macaulay} algebra.
\end{definition}

We may now prove the main result of this section. Recall that $B=R*G$, $A=R^G$, $\mfr=\mfr(R,G)$, $\mfb=\mfr\otimes \kk G$ and $\mfa=A\cap \mfr$.

\begin{theorem}\label{thm-gcm} Let $G$ be a finite group, and  $R$ a noetherian left $G$-module algebra with finite GK-dimension. Assume that $R$ satisfies Setup \ref{set}. If $R$ is $G$-Cohen-Macaulay, then $A$ is $\mfa$-Cohen-Macaulay of dimension $\Pty(R,G)$.
\end{theorem}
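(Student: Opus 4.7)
The strategy is to transfer the Cohen--Macaulay property from $R$ to $A$ through the equivalence of quotient categories $\QMod_\mfa A \simeq \QMod_\mfb B$ established in Theorem~\ref{thm-equiv}. First I would compute $R^i\Gamma_\mfb(B)$. A right $B$-module is injective if and only if it is injective as a right $R$-module, so a minimal injective $B$-resolution of $B$ is simultaneously an injective $R$-resolution; since $\Gamma_\mfb$ and $\Gamma_\mfr$ agree on the underlying sets of right $B$-modules by Lemma~\ref{lem-tortrans}, we get $R^i\Gamma_\mfb(B) \cong R^i\Gamma_\mfr(B)$ as $R$-modules. Using $B \cong R^{|G|}$ as right $R$-module together with Lemma~\ref{lem-sum}, this becomes $R^i\Gamma_\mfr(R)^{|G|}$, which vanishes for $i \neq p = \Pty(R,G)$ by the $G$-CM hypothesis. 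Lemma~\ref{lem-ARpro} ensures $\mfb$ has the right AR-property, so the decomposition of Lemmas~\ref{lem-locoh}--\ref{lem-secfun} applies in $\Mod B$; the long exact sequence of the triangle $0 \to T^\cdot \to I^\cdot \to E^\cdot \to 0$ yields $R^i\Gamma_\mfb(B) \cong \Ext^{i-1}_{\QMod_\mfb B}(\mathcal{B},\mathcal{B})$ for $i \geq 2$, whence $\Ext^i_{\QMod_\mfb B}(\mathcal{B},\mathcal{B}) = 0$ for $i \geq 1$ and $i \neq p-1$.

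Next, I transport this vanishing through the equivalence of Theorem~\ref{thm-equiv}: since $-\otimes_\mathcal{B}\mathcal{R}$ sends $\mathcal{B}$ to $\mathcal{R}$, we obtain $\Ext^i_{\QMod_\mfa A}(\mathcal{R},\mathcal{R}) = 0$ for $i \geq 1$ and $i \neq p-1$. To descend from $\mathcal{R}$ to $\mathcal{A}$, I invoke the Reynolds operator $\rho: R \to A$, $r \mapsto \tfrac{1}{|G|}\sum_{g \in G} g \cdot r$, which is an $A$-bimodule retraction of $A \hookrightarrow R$ and gives $R = A \oplus N$ as $A$-$A$-bimodules. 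Consequently $\mathcal{R} = \mathcal{A} \oplus \mathcal{N}$ in $\QMod_\mfa A$, so $\Ext^i_{\QMod_\mfa A}(\mathcal{A},\mathcal{A})$ is a direct summand of $\Ext^i_{\QMod_\mfa A}(\mathcal{R},\mathcal{R})$ and also vanishes for $i \geq 1$, $i \neq p-1$. Applying the same LES argument of Lemmas~\ref{lem-locoh}--\ref{lem-secfun} now to $A$ with the ideal $\mfa$ yields
\[
R^i\Gamma_\mfa(A) \cong \Ext^{i-1}_{\QMod_\mfa A}(\mathcal{A},\mathcal{A}) = 0 \quad \text{for } i \geq 2,\ i \neq p.
\]

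Finally I handle the low-degree terms. For $i = 0$: $\Gamma_\mfa(A) \subseteq \Gamma_\mfa(R)$, and using $\mfa R = R\mfa$ together with the cofinality of the filtrations $\{\mfa^n R\}$ and $\{\mfr^n\}$ gives $\Gamma_\mfa(R) = \Gamma_{\mfa R}(R) = \Gamma_\mfr(R) = 0$ when $p \geq 1$ (the case $p = 0$ is trivial). For $R^1\Gamma_\mfa(A)$ when $p \geq 2$: from the 4-term exact sequence $0 \to A \to \omega_A\pi_A(A) \to R^1\Gamma_\mfa(A) \to 0$ together with the Reynolds decomposition $\omega_A\pi_A(R) = \omega_A\pi_A(A) \oplus \omega_A\pi_A(N)$, it suffices to show $R^1\Gamma_\mfa(R) = 0$. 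This in turn follows by identifying $\omega_A\pi_A(R) = \Hom_{\QMod_\mfa A}(\mathcal{A},\mathcal{R})$ with $R$ via the equivalence and the adjunction $\pi \dashv \omega$, using that the vanishings $\Gamma_\mfb(B) = R^1\Gamma_\mfb(B) = 0$ (for $p \geq 2$) force $\omega_B\pi_B(B) = B$. The main obstacle is this final step: carefully tracking how the section functor $\omega$ interacts with the equivalence when applied to objects outside the distinguished pair $\mathcal{R} \leftrightarrow \mathcal{B}$, in particular identifying $\omega_A(\mathcal{R})$ with $R$ through a $B$-side computation involving the $A$-dual $\Hom_A(R,A)$ and its bimodule structure.
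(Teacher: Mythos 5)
Your handling of the range $i\ge 2$ is correct and genuinely different from the paper's: you compute $R^i\Gamma_\mfb(B)\cong R^i\Gamma_\mfr(R)^{\oplus |G|}$ directly (injective $B$-modules are injective $R$-modules and $B_R\cong R^{\oplus|G|}$), transport $\mathcal B\leftrightarrow\mathcal R_A$ through the equivalence of Theorem \ref{thm-equiv}, and then split $\mathcal A$ off $\mathcal R_A$ with the Reynolds operator, whereas the paper transports $\mathcal R_B\leftrightarrow\mathcal A$ and uses $\Ext^i_{\QMod_\mfb B}(\mathcal R,\mathcal R)\cong\Ext^i_{\QMod_\mfr R}(\mathcal R,\mathcal R)^G$ (Proposition \ref{prop-qext}). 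Your $i=0$ argument via cofinality is also fine. The price of your choice of matched objects shows up exactly in degree $1$, which is where the paper's choice pays off: its four-term sequence over $(R,\mfr)$ is $G$-equivariant, so bijectivity (resp.\ surjectivity) of $R\to\Hom_{\QMod_\mfr R}(\mathcal R,\mathcal R)$ passes to $A\to\Hom_{\QMod_\mfa A}(\mathcal A,\mathcal A)$ by taking $G$-invariants, killing $\Gamma_\mfa(A)$ and $R^1\Gamma_\mfa(A)$ at once.

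The gap is the step you yourself flagged, and as literally written it rests on a mismatched pair of objects. Under $-\otimes_{\mathcal B}\mathcal R$ the object $\mathcal A$ corresponds to $\mathcal R_B$ (since $eB\otimes_B Be\cong A$), not to $\mathcal B$; hence $\omega_A\pi_A(R)=\Hom_{\QMod_\mfa A}(\mathcal A,\mathcal R_A)\cong\Hom_{\QMod_\mfb B}(\mathcal R_B,\mathcal B)=\varinjlim\Hom_B(R\mfb^n,B)$, which is not $\omega_B\pi_B(B)$, so ``$\omega_B\pi_B(B)=B$ forces $\omega_A\pi_A(R)=R$'' does not follow from the equivalence alone. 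The step is repairable: identifying $R_B$ with $eB$, the splitting $B/\mfb^n\cong eB/e\mfb^n\oplus(1-e)B/(1-e)\mfb^n$ is compatible with the transition maps, so $\varinjlim\Ext^i_B(eB/e\mfb^n,B)$ is a direct summand of $R^i\Gamma_\mfb(B)$; its vanishing for $i=0,1$ when $p\ge2$ makes $R\cong\Hom_B(eB,B)\to\varinjlim\Hom_B(e\mfb^n,B)$ bijective, and one must then check this identification is compatible with the canonical map $R\to\omega_A\pi_A(R)$, since it is precisely the cokernel of that map that you need to kill. None of this bookkeeping is in your proposal. Separately, the case $p=0$ also requires $R^1\Gamma_\mfa(A)=0$ and is not addressed at all (your low-degree discussion covers only $i=0$ and the case $p\ge2$); the paper gets it because $R^1\Gamma_\mfr(R)=0$ makes $R\to\Hom_{\QMod_\mfr R}(\mathcal R,\mathcal R)$ surjective and $(-)^G$ is exact. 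Importing the paper's $G$-equivariant comparison (Lemma \ref{lem-gact}, Proposition \ref{prop-qext}, together with Lemma \ref{lem-qext}) would close both points more cheaply than completing your $B$-side computation.
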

\begin{proof} By Theorem \ref{thm-equiv}, we have an equivalence of abelian categories $$-\otimes_{\mathcal{B}}\mathcal{R}:\QMod_\mfa A\longrightarrow \QMod_\mfb B.$$ Under this equivalence, the object $\mathcal{R}\in \QMod_\mfb B$ corresponds to $\mathcal{A}\in \QMod_\mfa A$, where $\mathcal{R}$ (resp. $\mathcal{A}$) is the corresponding object of the right module $R_B\in \Mod B$ (resp. $A_A\in\Mod A$) in the quotient category. Then we have isomorphisms:
\begin{equation}\label{eq-isoext}
  \Ext_{\QMod_\mfa A}^i(\mathcal{A},\mathcal{A})\cong\Ext_{\QMod_\mfb B}^i(\mathcal{R},\mathcal{R})
\end{equation}
for all $i\ge0$. By Proposition \ref{prop-qext}, $\Ext_{\QMod_\mfb B}^i(\mathcal{R},\mathcal{R})\cong \Ext_{\QMod_\mfr R}^i(\mathcal{R},\mathcal{R})^G$ for all $i\ge0$. Hence we have
\begin{equation}\label{eq-ext}
  \Ext_{\QMod_\mfa A}^i(\mathcal{A},\mathcal{A})\cong\Ext_{\QMod_\mfr R}^i(\mathcal{R},\mathcal{R})^G,
\end{equation} for all $i\ge0$.
Applying the functor $\Hom_R(-,R)$ to the exact sequence 
$$0\to \mfr^n\to R\to R/\mfr^n\to0,$$ we obtain the following exact sequence
$$0\to \Hom_R(R/\mfr^n,R)\to R\to \Hom_R(\mfr^n,R)\to\Ext^1_R(R/\mfr^n,R)\to0,$$ and isomorphisms
$$\Ext^i_R(\mfr^n,R)\cong \Ext^{i+1}_R(R/\mfr^n,R), \text{ for }i\ge1.$$
Taking the direct limits and applying Lemma \ref{lem-qext}, we obtain the exact sequence
\begin{equation}\label{eq-sext}
  0\to \Gamma_\mfr(R)\to R\to \Hom_{\QMod_\mfr R}(\mathcal{R},\mathcal{R})\to R^1\Gamma_\mfr(R)\to0,
\end{equation}
and isomorphisms
\begin{equation}\label{eq-iso}
  \Ext^i_{\QMod_\mfr R}(\mathcal{R},\mathcal{R})\cong R^{i+1}\Gamma_\mfr(R), \text{ for }i\ge1.
\end{equation}
If $p=\Pty(R,G)\ge2$, then by (\ref{eq-iso}), $\Ext^{i-1}_{\QMod_\mfr R}(\mathcal{R},\mathcal{R})=0$ for all $i\neq p$ and $i\ge2$. By (\ref{eq-ext}),
$\Ext^{i-1}_{\QMod_\mfa A}(\mathcal{A},\mathcal{A})=0$ for all $i\neq p$ and $i\ge2$. Since $\mfa$ has the right AR-property, the exact sequence (\ref{eq-sext}) and the isomorphisms (\ref{eq-iso}) also hold if we replace $\mfr$ by $\mfa$, and $\mathcal{R}$ by $\mathcal{A}$. Hence $R^i\Gamma_\mfa(A)=0$ for $i\ge2$ and $i\neq p$.

Since $\depth_\mfr(R)\ge2$, it follows that $\Gamma_\mfr(R)=R^1\Gamma_\mfr(R)=0$. We see that the natural morphism $R\to\Hom_{\QMod_\mfr R}(\mathcal{R},\mathcal{R})$ in (\ref{eq-sext}) is an isomorphism, which implies that the natural morphism $A\to\Hom_{\QMod_\mfa A}(\mathcal{A},\mathcal{A})$ is an isomorphism since $R=A^G$ and $\Hom_{\QMod_\mfa A}(\mathcal{A},\mathcal{A})\cong \Hom_{\QMod_\mfr R}(\mathcal{R},\mathcal{R})^G$ by (\ref{eq-ext}) and Lemma \ref{lem-gact}. Replacing $\mfr$ by $\mfa$, and $\mathcal{R}$ by $\mathcal{A}$ in the exact sequence (\ref{eq-sext}), we obtain that $R^1\Gamma_\mfa(A)=0$ and $\Gamma_\mfa(A)=0$.

Finally, if $p=\Pty(R,G)=1$, then by (\ref{eq-iso}), $\Ext^{i}_{\QMod_\mfr R}(\mathcal{R},\mathcal{R})=0$ for all $i\ge1$. Hence by (\ref{eq-ext}),
$\Ext^{i}_{\QMod_\mfa A}(\mathcal{A},\mathcal{A})=0$ for all $i\ge1$, which implies that $R^i\Gamma_\mfa(A)=0$ for all $i\ge2$. Since $\Gamma_\mfr(R)=0$, we see that the natural morphism $R\to\Hom_{\QMod_\mfr R}(\mathcal{R},\mathcal{R})$ in (\ref{eq-sext}) is a monomorphism. This implies that the natural morphism $A\to\Hom_{\QMod_\mfa A}(\mathcal{A},\mathcal{A})$ is also a monomorphism. Hence $\Gamma_\mfa(A)=0$.

If $p=\Pty(R,G)=0$, similar to the above case, we have $\Ext^{i}_{\QMod_\mfa A}(\mathcal{A},\mathcal{A})=0$ for all $i\ge1$. Since $R^1\Gamma_\mfr(R)=0$, we see that the natural morphism $R\to\Hom_{\QMod_\mfr R}(\mathcal{R},\mathcal{R})$ in (\ref{eq-sext}) is an epimorphism, implying that the natural morphism $A\to\Hom_{\QMod_\mfa A}(\mathcal{A},\mathcal{A})$ is also an epimorphism. Hence $R^1\Gamma_\mfa(A)=0$.
\end{proof}

\begin{corollary} \label{cor-cm} Let $R$ be a $G$-Cohen-Macaulay algebra. Assume that $R$ satisfies Setup \ref{set}.  If $R$ has finite global dimension, then we have
$$R^i\Gamma_\mfa(M)\cong \Tor_{p-i}^A(M,D)$$ for $M\in\Mod A$ and $i\ge0$, where $p=\Pty(R,G)$ and $D=R^p\Gamma_\mfa(A)$.
\end{corollary}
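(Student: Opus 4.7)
The plan is essentially immediate given the machinery already assembled earlier in the excerpt. First I would invoke Theorem \ref{thm-gcm}: since $R$ is $G$-Cohen-Macaulay and satisfies Setup \ref{set}, the invariant subalgebra $A$ is $\mfa$-Cohen-Macaulay of dimension $p=\Pty(R,G)$. In particular $R^i\Gamma_\mfa(A)=0$ for $i\neq p$, which in the derived category means $R\Gamma_\mfa(A)\simeq D[-p]$, where $D=R^p\Gamma_\mfa(A)$ is concentrated in cohomological degree $p$.

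Next I would simply quote Theorem \ref{thm-finproinj}. Its hypotheses are exactly (a) finite global dimension of $R$ and (b) $A$ being $\mfa$-Cohen-Macaulay of dimension $d$; with $d=p$ both are now in hand. Its conclusion is precisely the desired formula
\[
R^i\Gamma_\mfa(M)\cong \Tor_{p-i}^A(M,D)
\]
for every $M\in\Mod A$ and $i\ge 0$. Recall that Theorem \ref{thm-finproinj} is itself obtained by chaining Proposition \ref{prop-finloc} (finite global dimension of $R$ forces $\Gamma_\mfa$ to have finite cohomological dimension) with Theorem \ref{thm-locdual}(i) (the local duality isomorphism $R\Gamma_\mfa(M)\simeq M\otimes_A^L R\Gamma_\mfa(A)$), so no further derived-category input is required.

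There is no real obstacle here: the entire argument is a transparent bookkeeping exercise once Theorems \ref{thm-gcm} and \ref{thm-finproinj} are in place. The only small point of care is the degree shift: substituting $R\Gamma_\mfa(A)\simeq D[-p]$ into the local duality isomorphism gives $R\Gamma_\mfa(M)\simeq M\otimes_A^L D[-p]$, and taking the $i$-th cohomology sends this to $H^{i-p}(M\otimes_A^L D)=\Tor_{p-i}^A(M,D)$ under the usual sign convention relating cohomology of a derived tensor product to $\Tor$. Thus the proof reduces to one or two lines invoking the two preceding theorems.
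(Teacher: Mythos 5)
Your argument is correct and is exactly the paper's proof: Theorem \ref{thm-gcm} gives that $A$ is $\mfa$-Cohen-Macaulay of dimension $p$, and then Theorem \ref{thm-finproinj} (with $d=p$) yields the stated isomorphism. Your extra remark about the shift $R\Gamma_\mfa(A)\simeq D[-p]$ is just an unpacking of the proof of Theorem \ref{thm-finproinj} and does not change the route.
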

\begin{proof} This is a direct consequence of Theorems \ref{thm-gcm} and \ref{thm-finproinj}.
\end{proof}

\vspace{5mm}

\subsection*{Acknowledgments}
Many thanks to James Zhang for many helpful conversations.
J.-W. He was supported by NSFC
(No. 11571239, 11401001)
and Y. Zhang by an FWO grant.

\vspace{5mm}


\end{document}